\documentclass[a4paper,12pt]{article}
\usepackage{amsfonts,amsmath,amsthm,amssymb}
\usepackage{bbm}
\usepackage{hyperref,tikz}
\usepackage{color}
\usepackage[shortlabels]{enumitem}
\usepackage[margin=1in]{geometry}

\theoremstyle{plain}
\newtheorem{theorem}{Theorem}[section]

\newtheorem{corollary}[theorem]{Corollary}
\newtheorem{lemma}[theorem]{Lemma}
\newtheorem{proposition}[theorem]{Proposition}
\newtheorem{computation}[theorem]{Computation}
\newtheorem*{claim*}{Claim}
\newtheorem*{problem*}{Problem}
\newtheorem*{conjecture*}{Conjecture}

\theoremstyle{definition}
\newtheorem{definition}[theorem]{Definition}

\newtheorem{example}[theorem]{Example}
\newtheorem{remark}[theorem]{Remark}

\newcommand\al{\alpha}
\newcommand\bt{\beta}

\newcommand\lm{\lambda}
\newcommand\sg{\sigma}

\newcommand\Lm{\Lambda}

\newcommand\cF{\mathcal{F}}
\newcommand\cH{\mathcal{H}}
\newcommand\cJ{\mathcal{J}}
\newcommand\cM{\mathcal{M}}
\newcommand\cAM{\mathcal{AM}}

\newcommand\F{\mathbb{F}}

\newcommand\Q{\mathbb{Q}}

\newcommand{\A}{\mathrm{A}}
\newcommand{\B}{\mathrm{B}}
\newcommand{\C}{\mathrm{C}}

\newcommand\la{\langle}
\newcommand\ra{\rangle}
\newcommand\lla{\langle\!\langle}
\newcommand\rra{\rangle\!\rangle}

\DeclareMathOperator{\Aut}{Aut}

\DeclareMathOperator{\Miy}{\mathrm{Miy}}

\newcommand\ad{\mathrm{ad}}
\newcommand\one{\mathbbm{1}}

\newcommand\qu{\frac{1}{4}}
\renewcommand\th{\frac{1}{32}}

\setlist[enumerate,1]{label={\upshape (\alph*)}}
\setlist[enumerate,2]{label={\upshape (\roman*)}}

\title{Full automorphism groups of the axial algebra for $M_{11}$ and related algebras} 
\author{T.M.~Mudziiri~Shumba\footnote{Sobolev Institute of Mathematics, Novosibirsk, Russia, email: tendshumba@gmail.com}, \\ and 
S.~Shpectorov\footnote{School of Mathematics, University of Birmingham, Edgbaston, Birmingham, B15 2TT, UK, email: s.shpectorov@bham.ac.uk}}

\begin{document}
\maketitle

\begin{abstract}
In this paper, in continuation of \cite{aut}, we compute the full automorphism groups of the $286$-dimensional algebra for $M_{11}$, its subalgebras and other related algebras. This includes, in particular, the $101$-dimensional algebra for $L_2(11)$ and the $76$-dimensional algebra for $A_6$. 

While smaller algebras can be handled by the fully automatic nuanced method from \cite{aut}, the larger algebras, mentioned above, require a hybrid method combining computation with hand-made proofs.
\end{abstract}

\section{Introduction}

Axial algebras introduced in \cite{Axial1,Axial2} are commutative non-associative algebras generated by 
axes, which are (primitive) idempotents satisfying a prescribed fusion law, tabulating properties of the 
adjoint action of the axis. For a discussion of the axial paradigm and known classes of axial algebras see 
the survey \cite{MS1}. In particular, the class of algebras of Monster type is defined by the fusion law 
$\cM(\al,\bt)$ in Table \ref{Monster type law} that generalises in a natural way the fusion law 
$\cM(\qu,\th)$ (see Table \ref{Monster law}) found in the Griess algebra for the Monster sporadic simple 
group. We will refer to the latter as the \emph{Monster fusion law} and we will be focussing in this paper 
on algebras with this specific fusion law. Note that this fusion law is graded and consequently all 
such algebras have substantial automorphism groups. Furthermore, according to \cite{aut}, finite-dimensional 
algebras with the Monster fusion law have finite automorphism groups. This explains why we always relate 
finite groups with our algebras.

A large number of algebras with Monster fusion law was computed using the expansion algorithm 
\cite{axialconstruction,kms,database} and also earlier algorithms \cite{Se,PW}. Many (but not all) of these 
algebras are subalgebras of the Griess algebra. So understanding these algebras is important for the study 
of the Griess algebra, with a connection to the area of vertex operator algebras. 

In \cite{aut} we initiated the study of automorphism groups of axial algebras of Monster type, introducing 
new methods, both theoretic and computational, for achieving this task. The original motivation for this 
project was the observation that the dimensions of the algebras we constructed tend to repeat. One possible 
explanation of this phenomenon is that perhaps some of these algebras are isomorphic even though they 
originate from different groups and axets.\footnote{The term axet introduced in \cite{MS} means a closed set 
of axes, i.e., a set of axes that is invariant under the corresponding Miyamoto group. See the exact 
definitions in Section \ref{preliminaries}.} This is not a fanciful thinking: such situation has already been 
observed in the axial universe. E.g., there are infinitely many $2$-generated axial algebras of Jordan type 
half, up to axet-preserving automorphisms, but only finitely many isomorphism types if we ignore the axet 
structure, i.e., if we view them up to general algebra isomorphisms. 

One approach to this isomorphism question is via computing the full automorphism group of the algebra. 
Indeed, if two axial algebras, coming from groups $G$ and $H$, are isomorphic, then both $G$ and $H$ should 
be subgroups of the full automorphism group of the algebras, which should then be bigger than $G$ and/or $H$. 
Equivalently, we should have additional axes in the two algebras, not contained in the axets we started with.

The largest algebra whose automorphism group we were able to compute in \cite{aut} was the $151$-dimensional 
algebra for $\Aut{S_6}$. In the present paper, we significantly improve on the methods of \cite{aut} which allows us to 
compute the automorphism group of the largest constructed algebra to date, $A_{286}$, which has dimension 
$286$, and has the smallest Mathieu sporadic simple group $M_{11}$ as its Miyamoto group. (So this means our 
methods are now powerful enough to handle axial algebras we can currently construct.) Additionally we 
determine the full automorphism groups of some related algebras, e.g. as the subalgebras of $A_{286}$ 
corresponding to the maximal subgroups of $M_{11}$, including algebra $A_{101}$ for $L_2(11)$ and the  
algebra $A_{76}$ for $M_{10}=A_6.2$, as well as some other naturally related algebras. 

Our results are summarised in Table \ref{results}.
\begin{table}\label{results}
\centering
\begin{tabular}{cccccccc}
\hline
$A$&$G$&$G_0$&axet& dim & shape&new&${\rm Aut}(A)$\\\hline
$A_{17}$&$2{\cdot}S_4$&$S_4$&$1+12^*$&$17$&$4B6A$&yes& $2\times S_4$\\\hline
$A_{36}$&$S_5$&$S_5$&$15+10^*$&$36$&$4B$ &no&$S_5$\\\hline
$A_{24}$&	&$S_3\wr 2$ &$9+(6+6)^*$&$24$&$4B$&yes&$U_3(2){:}2$\\
$A_{18}$&$U_3(2){:}2$& $S_3^2$&$9+(3+3)^*$&$18$&$6A6A$ &no&$S_3\wr 2$\\
$A_{12}$&	&$3^2:2$&$9$&$12$&$3A3A3A3A$ & yes&$AGL(2,3)$\\\hline
$A_{101}$&$L_2(11)$&$L_2(11)$&$55$&$101$&$6A5A$ &no&${\rm Aut}(L_2(11))$\\\hline
$A_{76}$&$M_{10}$&$A_6$&$45$&$76$&$4B3A3A$ &no&${\rm Aut}(A_6)$ \\\hline
$A_{286}$&$M_{11}$&$M_{11}$&$165$&$286$&$4B$&no&$M_{11}$\\\hline
\end{tabular}
\caption{Algebras studied in this paper}
\end{table}
In this table we indicate the notation for the algebra $A$ adapted in this paper and derived from the 
algebra's dimension (luckily all these algebras $A$ have different dimensions); the group $G$ the algebra $A$ 
was constructed from; the Miyamoto group $G_0$ of $A$; the size and composition of the axet; the shape of the 
algebra $A$ and its dimension; whether the algebra is a known one or it has been discovered within this 
project; and finally, the full automorphism group of the algebra. The axet composition shows how the axet 
splits into orbits under the Miyamoto group $G_0$. An asterisk indicates that the particular orbit or orbits 
within the axet already generate the algebra. This means that the same algebra $A$ may be considered with a 
smaller axet, only including the starred orbits, although the shape of the algebra will then change.

For several of these algebras $A$ the full automorphism group turned out to be bigger than the expected 
group, i.e., the stabiliser of $A$ in $M_{11}$. While the algebra $A_{17}$ was constructed from the maximal 
subgroup $G\cong 2\cdot S_4$ of $M_{11}$, the central involution $z$ of $G$ acts on $A_{17}$ as identity, 
hence the Miyamoto group $G_0$ is in this case the factor group $S_4$ of $G$. However, the axis corresponding 
to $z$ is contained in $A_{17}$ and it is a Jordan axis there, which leads to an additional involutive 
automorphism of $A_{17}$ appearing as the direct factor in $\Aut(A_{17})$. The algebra $A_{24}$ is also the 
subalgebra of $A_{286}$ corresponding to the maximal subgroup $U_3(2):2$ of $M_{11}$. This algebra has no new 
automorphisms. However, its subalgebras, $A_{18}$ and $A_{12}$, constructed from natural subaxets of the axet 
of $A_{24}$, have extra symmetries. The full automorphism group of $A_{18}$ is twice as big as what we can 
find in $M_{11}$, and $\Aut(A_{12})$ is bigger than its stabiliser in $M_{11}$ by a factor of $24$. These are 
very interesting algebras and the way they are embedded in $A_{24}$ suggests that there may be a way to 
construct even larger algebras using $A_{18}$ and $A_{12}$ as building blocks. Also, for the algebras 
$A_{101}$ and $A_{76}$, constructed from the maximal subgroups $L_2(11)$ and $M_{10}$, respectively, the 
final full automorphism groups are larger by a factor of $2$.

Let us also mention that the expansion algorithm currently fails to complete the shapes of the algebras 
$A_{36}$ and $A_{24}$. We now know that these two cases are non-empty and the complexity of these cases may 
indicate that either there is a second algebra in each of these cases or possibly a larger cover of the known 
algebra.

We also found interesting additional idempotents in some of these algebras. For example, $A_{76}$ is 
generated by $30$ primitive axes of almost Monster type $\mathcal{AM}\left(\frac{1}{2},\frac{1}{8}\right)$. 
This fusion law is just like the Monster type fusion law (see Table \ref{Monster type law}), except 
$\al\ast\al=\{1,0,\al\}$. Under this fusion law, the Miyamoto group of $A_{76}$ is $S_6$.

We organise the paper as follows. In Section \ref{preliminaries}, we give the background information about 
axial algebras encompassing the important definitions and results. In particular, the concept of shape is 
explained in detail and several examples are provided. The computational functionality developed in the study 
of automorphism groups of axial algebras is also reviewed in this section. In Section \ref{small groups}, we 
discuss algebras admitting the smaller maximal subgroups of $M_{11}$, namely $2{.}S_4$, $S_5$ and 
$U_3(2){:}2$. These algebras were investigated by purely computational tools. The remainder of the paper
looks at the larger algebras for the maximal subgroups $M_{10}$ and $L_2(11)$, as well as $M_{11}$ itself.
Specifically, in Section \ref{M10 alg}, we complete the algebra $A_{76}$ which admits $M_{10}$ and has 
Miyamoto group $A_6$. The algebra $A_{101}$ for $L_2(11)$ is studied in Section \ref{L211 algebra}. Finally, 
we determine the full automorphism group of the largest algebra $A_{286}$ for $M_{11}$ in Section \ref{M11 
algebra}.  

All the computations, save one, were carried out on a laptop with 16GB of RAM having AMD Ryzen processor and
running Arch Linux. These computations were carried out using the set of MAGMA \cite{MAGMA} routines, as 
described in Subsection \ref{package}. The routines and the log files of our calculations are available on 
GitHub \cite{m11 aut}. The longest computation (finding all idempotents of a given length in an
$18$-dimensional subalgebra) was first carried out by E.A. O'Brien and then also independently verified by us 
in a University of Birmingham Unix server run by D.A. Craven. 

\medskip
{\bf Acknowledgement:} The authors gratefully acknowledge partial support of the Mathematical Center in 
Akademgorodok under the agreements No. 075-15-2019-1675 and 075-15-2022-281 with the Ministry of 
Science and Higher Education of the Russian Federation.

\section{Preliminaries} \label{preliminaries}

In this section we present background information about the axial algebras
in general and the specific examples we study in this paper. We also review 
the computational tools created for the automorphism project \cite{aut}.
Throughout, we follow the notation for finite groups, as found for example 
in \cite{atlas}. 

\subsection{Axial algebras}

The algebras we are studying in this paper were discovered within the paradigm 
of axial algebras, so we start by discussing the basics of it. For a more 
detailed discussion see e.g. \cite{MS1}.

\begin{definition}
A \emph{fusion law} is a pair $(\cF,\ast)$, where $\cF$ is a set and $\ast$ 
a binary operation on $\cF$ taking values in the power set of $\cF$.
\end{definition}

The particular fusion law of relevance for this paper is shown in Table 
\ref{Monster law}.
\begin{table}
    \centering
    \renewcommand{\arraystretch}{1.7}
	\begin{tabular}{|c||c|c|c|c|}
		\hline
		$\ast$ & $1$ & $0$ & $\qu$ & $\th$\\
		\hline\hline
		$1$ & $1$ & & $\qu$ & $\th$\\
		\hline
		$0$ & & $0$ & $\qu$ & $\th$\\
		\hline
		$\qu$ & $\qu$ & $\qu$ & $1,0$ & $\th$\\
		\hline
		$\th$ & $\th$ & $\th$ & $\th$ & $1,0,\qu$\\
		\hline
	\end{tabular}
	\caption{Monster fusion law $\cM=\cM(\qu,\th)$}
	\label{Monster law}
\end{table}
Here $\cF=\cM=\{1,0,\qu,\th\}$ and the table cell corresponding to $\lm$ and 
$\mu$ lists the elements of $\lm\ast\mu$. For example, $1\ast 1=\{1\}$, $0\ast 
1=\emptyset$, $\qu\ast\qu=\{1,0\}$, and so on.

\bigskip
Let $A$ be a commutative non-associative algebra over the field $\mathbb{F}$. 
For an element $a\in A$, we write $\ad_a$ for the adjoint map $A\to A$ given 
by $u\mapsto au$. For $\lm\in\F$, the $\lm$-eigenspace of $\ad_a$ will be 
denoted by $A_\lm(a)$. That is,
$$A_\lm(a)=\{u\in A\mid au=\lm u\}.$$
For $\Lm\subseteq\F$, 
$$A_\Lm(a)=\oplus_{\lm\in\Lm}A_\lm(a).$$

Let now $(\cF,\ast)$ be a fusion law, with $\cF\subseteq\F$.

\begin{definition}
A nonzero idempotent $a\in A$ is an ($\cF$-)axis if
\begin{itemize}[(a)]
\item $A=A_\cF(a)$; and
\item $A_\lm(a)A_\mu(a)\subseteq A_{\lm\ast\mu}(a)$ for all $\lm,\mu\in\cF$.
\end{itemize}
\end{definition} 

Condition (a) means that $\ad_a$ is diagonalisable with spectrum fully in 
$\cF$, while condition (b) means that the fusion law limits the values of the algebra product 
on eigenvectors of $\ad_a$.

It is easy to see that $a\in A_1(a)$, so $1\in\cF$.

\begin{definition}
An axis $a\in A$ is \emph{primitive} if $A_1(a)=\F a$, that is, $A_1(a)$ is $1$-dimensional spanned by $a$.
\end{definition}

Now we can give the key definition.

\begin{definition}
The algebra $A$ is a (primitive) ($\cF$-)axial algebra if it is generated by a set of (primitive) axes.  
\end{definition}

The following property of the fusion law has important implications for the structure of the algebra.

\begin{definition}
If $\cF$ is a fusion law and $0\in \cF$, we say that $\cF$ is \emph{Seress} if for all $\lm \in \cF$, 
$0\ast\lm\subseteq \{\lm\}$, i.e., $0$ behaves the same way as $1$ in $\cF$.
\end{definition}

When the fusion law $\cF$ is Seress then, first of all, the eigenspace $A_0(a)$ is a subalgebra of $A$ for 
each axis $a$. Furthermore, $A_{\{1,0\}}(a)=A_1(a)\oplus A_0(a)$ is also a subalgebra. Secondly, this property 
assures that each eigenspace $A_\lm(a)$ is a module for both of these algebras, i.e., 
$A_{\{1,0\}}(a)A_\lm(a)\subseteq A_\lm(a)$. There is also partial associativity in $A$ implied by the Seress 
property, but this is not going to be important for this paper.

\medskip
Often an axial algebra admits a natural bilinear form.

\begin{definition}
Suppose that $A$ is an axial algebra. A nonzero bilinear form on $A$ is called a \emph{Frobenius form} provided that
$$(uv,w)=(u,vw)$$
for all $u,v,w\in A$. 
\end{definition}

Note that a nonzero multiple of a Frobenius form is again Frobenius. Typically, a Frobenius form is scaled so 
that axes have (square) length $1$, when this is possible.

\bigskip
It can be seen from the above, that a fusion law can be used to define a class 
of axial algebras. In particular, the class we are interested in is the class 
of (primitive) algebras of Monster type, which is defined by the Monster type 
fusion law $\cM(\al,\bt)$ in Table \ref{Monster type law},
\begin{table}
    \centering
    \renewcommand{\arraystretch}{1.7}
	\begin{tabular}{|c||c|c|c|c|}
		\hline
		$\ast$ & $1$ & $0$ & $\al$ & $\bt$\\
		\hline\hline
		$1$ & $1$ & & $\al$ & $\bt$\\
		\hline
		$0$ & & $0$ & $\al$ & $\bt$\\
		\hline
		$\al$ & $\al$ & $\al$ & $1,0$ & $\bt$\\
		\hline
		$\bt$ & $\bt$ & $\bt$ & $\bt$ & $1,0,\al$\\
		\hline
	\end{tabular}
	\caption{Monster type fusion law $\cM=\cM(\al,\bt)$}
	\label{Monster type law}
\end{table}
looking like the Monster fusion law in Table \ref{Monster 
law}, but with arbitrary distinct $\al,\bt\in\F\setminus\{1,0\}$ instead of 
$\qu$ and $\th$, respectively. The class of algebras of Monster type appeared first in \cite{RT,r} and the motivating example was the Griess algebra 
for the Monster sporadic simple group, which arises exactly for 
$(\al,\bt)=\left(\qu,\th\right)$. This explains the interest to this specific 
fusion law.

This case of algebras with the Monster fusion law is probably the most investigated one. 
In particular, nearly two hundred of such algebras, corresponding to various 
small finite groups, were constructed using the expansion algorithm 
\cite{axialconstruction}. (See also \cite{Se} and \cite{PW}.) A few of these algebras will be the focus of this paper.

\medskip
In studying subalgebras of axial algebras, a class of fusion laws which are essentially of Monster type has 
been encountered. Namely, a fusion law $\cF$ is of \emph{almost Monster} type $(\al, \bt)$, denoted $\cAM(\al, \bt)$, if $\cF$ differs from the Monster fusion law $\mathcal{M}(\al,\bt)$ only in that $\al\ast\al=\{1,0,\al\}$ (see Table \ref{almost Monster type}).
\begin{table}
\centering
\renewcommand{\arraystretch}{1.7}
\begin{tabular}{|c||c|c|c|c|}
\hline
$\ast$ & $1$ & $0$ & $\al$ & $\bt$\\
\hline\hline
$1$ & $1$ & & $\al$ & $\bt$\\
\hline
$0$ & & $0$ & $\al$ & $\bt$\\
\hline
$\al$ & $\al$ & $\al$ & $1,0,\al$ & $\bt$\\
\hline
$\bt$ & $\bt$ & $\bt$ & $\bt$ & $1,0,\al$\\
\hline
\end{tabular}
\caption{Almost Monster type fusion law $\cAM(\al,\bt)$}
\label{almost Monster type}
\end{table}

\subsection{Miyamoto group}

Next we define graded fusion laws following the approach of \cite{dpsv}. Given a group $T$, we can introduce the
\emph{group fusion law} $(T,\ast)$, where $t\ast s=\{ts\}$ for all $t,s\in T$. 
A \emph{morphism} of fusion laws $(\cF,\ast)$ and $(\cH,\ast)$ is a map $\phi:\cF\to\cH$ such that
$$\phi(\lm\ast\mu)\subseteq\phi(\lm)\ast\phi(\mu),$$
for all $\lm,\mu\in\cF$.

We are now prepared for the main concept.

\begin{definition}
Suppose that $\cF$ is a fusion law and $T$ is a group. A 
$T$-grading of $\cF$ is a morphism from $\cF$ to the group fusion 
law of $T$. 
\end{definition} 

For our purposes, $T$ will always be abelian and small. The grading $\phi$ is called \emph{adequate} if $T=\la\phi(\cF)\ra$.
This condition assures that the grading is by $T$ itself and not by a proper subgroup of $T$.
\begin{table}[!ht]
\setlength{\tabcolsep}{4pt}
\renewcommand{\arraystretch}{1.5}
%\addtolength\tabcolsep{5pt}
\centering
\footnotesize
\begin{tabular}{c|c|c}
\hline
Type & Basis & Products \& form \\ \hline
$2\textrm{A}$ & \begin{tabular}[t]{c} $a_0$, $a_1$, \\ $a_\rho$ 
\end{tabular} & 
\begin{tabular}[t]{c}
$a_0 \cdot a_1 = \frac{1}{8}(a_0 + a_1 - a_\rho)$ \\
$a_0 \cdot a_\rho = \frac{1}{8}(a_0 + a_\rho - a_1)$ \\
$(a_0, a_1) = (a_0, a_\rho)= (a_1, a_\rho) = \frac{1}{8}$
\vspace{4pt}
\end{tabular}
\\
$2\textrm{B}$ & $a_0$, $a_1$ &
\begin{tabular}[t]{c}
$a_0 \cdot a_1 = 0$ \\
$(a_0, a_1) = 0$
\vspace{4pt}
\end{tabular}
\\
$3\textrm{A}$ & \begin{tabular}[t]{c} $a_{-1}$, $a_0$, \\ $a_1$, $u_\rho$ \end{tabular} &
\begin{tabular}[t]{c}
$a_0 \cdot a_1 = \frac{1}{2^5}(2a_0 + 2a_1 + a_{-1}) - \frac{3^3\cdot5}{2^{11}} u_\rho$ \\
$a_0 \cdot u_\rho = \frac{1}{3^2}(2a_0 - a_1 - a_{-1}) + \frac{5}{2^{5}} u_\rho$ \\
$u_\rho \cdot u_\rho = u_\rho$, $(a_0, a_1) = \frac{13}{2^8}$ \\
$(a_0, u_\rho) = \frac{1}{4}$, $(u_\rho, u_\rho) = \frac{2^3}{5}$ 
\vspace{4pt}
\end{tabular}
\\
$3\textrm{C}$ & \begin{tabular}[t]{c} $a_{-1}$, $a_0$, \\ $a_1$ \end{tabular} &
\begin{tabular}[t]{c}
$a_0 \cdot a_1 = \frac{1}{2^6}(a_0 + a_1 - a_{-1})$ \\
$(a_0, a_1) = \frac{1}{2^6}$
\vspace{4pt}
\end{tabular}
\\
$4\textrm{A}$ & \begin{tabular}[t]{c} $a_{-1}$, $a_0$, \\ $a_1$, $a_2$, \\ $v_\rho$ \end{tabular} &
\begin{tabular}[t]{c}
$a_0 \cdot a_1 = \frac{1}{2^6}(3a_0 + 3a_1 + a_{-1} + a_2 - 3v_\rho)$ \\
$a_0 \cdot v_\rho = \frac{1}{2^4}(5a_0 - 2a_1 - a_2 - 2a_{-1} + 3v_\rho)$ \\
$a_0 \cdot a_2 = 0$, $v_\rho \cdot v_\rho = v_\rho$ \\
$(a_0, a_1) = \frac{1}{2^5}$, $(a_0, a_2) = 0$\\
$(a_0, v_\rho) = \frac{3}{2^3}$, $(v_\rho, v_\rho) = 2$
\vspace{4pt}
\end{tabular}
\\
$4\textrm{B}$ & \begin{tabular}[t]{c} $a_{-1}$, $a_0$, \\ $a_1$, $a_2$, \\ $a_{\rho^2}$ \end{tabular} &
\begin{tabular}[t]{c}
$a_0 \cdot a_1 = \frac{1}{2^6}(a_0 + a_1 - a_{-1} - a_2 + a_{\rho^2})$ \\
$a_0 \cdot a_2 = \frac{1}{2^3}(a_0 + a_2 - a_{\rho^2})$ \\
$a_0 \cdot a_{\rho^2} = \frac{1}{2^3}(a_0 + a_{\rho^2} - a_2)$, $a_{\rho^2} \cdot a_{\rho^2} = a_{\rho^2}$ \\
$(a_0, a_1) = \frac{1}{2^6}$, $(a_0, a_2) = (a_0, a_{\rho^2})= \frac{1}{2^3}$, $(a_{\rho^2}, a_{\rho^2}) = 1$
\vspace{4pt}
\end{tabular}
\\
$5\textrm{A}$ & \begin{tabular}[t]{c} $a_{-2}$, $a_{-1}$,\\ $a_0$, $a_1$,\\ $a_2$, $w_\rho$ \end{tabular} &
\begin{tabular}[t]{c}
$a_0 \cdot a_1 = \frac{1}{2^7}(3a_0 + 3a_1 - a_2 - a_{-1} - a_{-2}) + w_\rho$ \\
$a_0 \cdot a_2 = \frac{1}{2^7}(3a_0 + 3a_2 - a_1 - a_{-1} - a_{-2}) - w_\rho$ \\
$a_0 \cdot w_\rho = \frac{7}{2^{12}}(a_1 + a_{-1} - a_2 - a_{-2}) + \frac{7}{2^5}w_\rho$ \\
$w_\rho \cdot w_\rho = \frac{5^2\cdot7}{2^{19}}(a_{-2} + a_{-1} + a_0 + a_1 + a_2)$ \\
$(a_0, a_1) = (a_0, a_2)= \frac{3}{2^7}$, $(a_0, w_\rho) = 0$, $(w_\rho, w_\rho) = \frac{5^3\cdot7}{2^{19}}$
\vspace{4pt}
\end{tabular}
\\
$6\textrm{A}$ & \begin{tabular}[t]{c} $a_{-2}$, $a_{-1}$,\\ $a_0$, $a_1$,\\ $a_2$, $a_3$, \\ $a_{\rho^3}$, $u_{\rho^2}$ \end{tabular} &
\begin{tabular}[t]{c}
$a_0 \cdot a_1 = \frac{1}{2^6}(a_0 + a_1 - a_{-2} - a_{-1} - a_2 - a_3 + a_{\rho^3}) + \frac{3^2\cdot5}{2^{11}}u_{\rho^2}$ \\
$a_0 \cdot a_2 = \frac{1}{2^5}(2a_0 + 2a_2 + a_{-2}) - \frac{3^3\cdot5}{2^{11}}u_{\rho^2}$ \\
$a_0 \cdot u_{\rho^2} = \frac{1}{3^2}(2a_0 - a_2 - a_{-2}) + \frac{5}{2^5}u_{\rho^2}$, $u_{\rho^2} \cdot u_{\rho^2} = u_{\rho^2}$ \\
$a_0 \cdot a_3 = \frac{1}{2^3}(a_0 + a_3 - a_{\rho^3})$, $a_{\rho^3} \cdot a_{\rho^3} = a_{\rho^3}$, $a_{\rho^3} \cdot u_{\rho^2} = 0$ \\
$(a_0, a_1) = \frac{5}{2^8}$, $(a_0, a_2) = \frac{13}{2^8}$, $(a_0, u_{\rho^2}) = \frac{1}{4}$, $(u_{\rho^2}, u_{\rho^2}) = \frac{2^3}{5}$ \\
$(a_0, a_3) = (a_0, a_{\rho^3}) = \frac{1}{8}$,  $(a_{\rho^3}, a_{\rho^3}) = 1$, $(a_{\rho^3}, u_{\rho^2}) = 0$
\end{tabular}\\
\hline
\end{tabular}
\caption{Norton-Sakuma algebras}\label{Norton-Sakuma}
\end{table}

The fusion laws $\cM(\al,\bt)$ and $\cAM(\al,\bt)$ are graded by the group of order $2$, $T=C_2=\{\pm 1\}$. 
Namely, the grading map sends $1$, $0$, and $\al$ to $1$ and it sends $\bt$ to $-1$. Clearly, this grading is 
adequate, since it is surjective.

\medskip
From now on we consider a fusion law $\cF$ graded by an abelian group $T$ via a grading $\phi$. Let 
$T^\ast$ be the set of (linear) characters of $T$, that is the set of all homomorphisms from $T$ to the 
multiplicative group of the field $\cF$.

\begin{definition}
Suppose that $(A,X)$ is an $\cF$-axial algebra, $a\in X$ an axis, and $\chi\in T^\ast$. The \emph{Miyamoto 
automorphism} $\tau_a(\chi)$ of $A$ corresponding to $a$ and $\chi$ is the linear map $A\to A$ that acts as 
the scalar $\chi(\phi(\lm))$ on $A_\lm(a)$ for each $\lm\in\cF$.
\end{definition}

It is easy to see that $\tau_a(\chi)$ is indeed an automorphism of $A$. Furthermore, the map $\tau_a$ for a 
fixed axis $a$ is a homomorphism from $T^\ast$ to the automorphism group of $A$, $\Aut(A)$. The image of 
$\tau_a$ is called the \emph{axial subgroup} corresponding to $a$.

\begin{definition}
The \emph{Miyamoto group} $\Miy(A)$ of $A$ is the subgroup of $\Aut(A)$ generated by the axial subgroups 
for all generating axes $a\in X$. In other words,
$$\Miy(A)=\la\tau_a(\chi)\mid a\in X,\chi\in T^\ast\ra.$$
\end{definition}

Since we are focussing in this paper on fusion laws graded by the group $T=\{\pm 1\}$ of order $2$, we can 
simply our notation for this specific case. Note that in this case we must assume that $\F$ is of characteristic 
not $2$, or else the grading does not work. (In fact, we will work in characteristic zero later on.) Hence the 
character group $T^\ast$ contains the unique nontrivial character giving us the unique non-identity Miyamoto automorphism for each given axis $a$. We will denote this unique non-identity automorphism
by $\tau_a$ and call it the \emph{Miyamoto involution} corresponding to $a$. Indeed, it is easy to see that
$\tau_a$ squares to the identity, and so it is normally an involution.

Note that the involutions $\tau_a$ do not need to be distinct for different axes $a$.

\begin{definition}
Axes $a$ and $b$, $a\neq b$, are called \emph{twins} if $\tau_a=\tau_b$.
\end{definition}

While $\tau_a$ is usually an involution, it can also be the identity when $A_\bt(a)=0$ (or, more generally, all 
eigenspaces $A_\lm(a)$ with $\phi(\lm)=-1$ are trivial). 

\begin{definition}
An axis $a$ of Monster type $(\al,\bt)$ is said to be a \emph{Jordan axis} if $A_\bt(a)=0$.  
\end{definition}

The reason for the term is that $a$ then satisfies a smaller Jordan type law (see Table \ref{Jordan type}). In this case $\tau_a$ is the identity, as we have already mentioned. However, the Jordan type law admits its own grading by the group of order $2$. Namely, the grading sends $1$ and $0$ to $1$ and it sends $\al$ to $-1$. 
This leads to an extra involutive automorphism $\sg_a$ that may or may not be contained in the Miyamoto 
group of $A$.
\begin{table}
\centering
\renewcommand{\arraystretch}{1.7}
\begin{tabular}{|c||c|c|c|}
\hline
$\ast$ & $1$ & $0$ & $\al$\\
\hline\hline
$1$ & $1$ & & $\al$\\
\hline
$0$ & & $0$ & $\al$\\
\hline
$\al$ & $\al$ & $\al$ & $1,0$\\
\hline
\end{tabular}
\caption{Jordan type fusion law $\cJ(\al)$}
\label{Jordan type}
\end{table}

We will encounter twins and Jordan axes in the algebras below.

\medskip
For an automorphism $\phi$ of $A$, if $a$ is an axis then, clearly, $\phi(a)$ is also an axis. 

\begin{definition}
Suppose that $(A,X)$ is an axial algebra with a graded fusion law. The \emph{closure} $\bar X$ of the set of generating axes $X$ is defined as 
$$\bar X=\{\phi(a)\mid a\in X,\phi\in\Miy(A)\}.$$
The set $X$ is said to be \emph{closed} if $\bar X=X$.
\end{definition}

Note that $\bar X$ also consists of axes that generate $A$, so we can now 
view $A$ as an axial algebra with the larger generating set. It was shown 
in \cite{axialconstruction} that the Miyamoto group of $A$ is not affected 
by this change of the set of generators, and consequently, the closure of 
$\bar X$ is again $\bar X$, that is, $\bar X$ is closed. The advantage of 
the closed set of generators is that the Miyamoto group acts on it and this 
action is faithful. In what follows we will typically assume that our set $X$ 
is closed.

According to \cite{MS}, an \emph{axet} is a set with an action of a group on it 
together with a tau map from the set to the group, satisfying some natural axioms 
involving also a graded fusion law. Every closed set of axes in an algebra with a graded 
fusion law forms an axet. We do not need the precise definitions, but it is convenient 
to use the short term `axet'  for closed sets of axes, as we did in the 
introduction and will continue doing throughout the paper.

Finally, there is one more concept that appears in the description of algebras below. 
An axial algebra $(A,X)$, with closed $X$, is said to be \emph{$k$-closed} if 
$A$ is spanned by the products of axes from $X$ with at most $k$ factors. 
E.g., an algebra is $1$-closed if $X$ spans $A$.

\subsection{Shapes}

It was proved in \cite{generic} that every $2$-generated (i.e., with $|X|=2$, where $X$ is not necessarily closed) algebra of Monster type is isomorphic
to one of the Norton-Sakuma algebras, first found by Norton as subalgebras of the Griess algebra.
Table \ref{Norton-Sakuma} contains basic information about these algebras, where $a_0$ and $a_1$ are the two generating axes.
Note that all Norton-Sakuma algebras admit a Frobenius form which is scaled so that all axes have length $1$.
Hence Table \ref{Norton-Sakuma} also contains the values of this Frobenius form. Column 1 of the table contains
the names of the algebras which are derived from the names of the conjugacy classes of the Monster group
corresponding in a natural way to the Norton-Sakuma algebras.

Norton-Sakuma algebras $B$ are unital, that is, they all contain an identity element $\one_B$. In Table \ref{length of one},
\begin{table}[!ht]
\centering
\renewcommand{\arraystretch}{2}
\begin{tabular}{ccccccccc}\hline
Type of $B$&$2\A$&$2\B$&$3\A$&$3\C$&$4\A$&$4\B$&$5\A$&$6\A$\\\hline
$(\one_B,\one_B)$&$\frac{12}{5}$&$2$&$\frac{116}{35}$&$\frac{32}{11}$&$4$&$\frac{19}{5}$&$\frac{32}{7}$&$\frac{51}{10}$\\\hline 
\end{tabular}
\caption{Identity length} \label{length of one}
\end{table}
we provide information concerning the length of $\one_B$ in 
Norton-Sakuma algebras. This is a convenient parameter 
distinguishing these algebras.

\medskip
Related to the known list of Norton-Sakuma algebras, we have the important 
concept of the shape of an algebra of Monster type. This was first introduced 
by Ivanov (e.g., see \cite{ipss}) in the context of Majorana algebras and then 
generalised to all axial algebras. We will give this definition in the context 
of algebras of Monster type.

\medskip
Suppose that $(A,X)$ is an axial algebra of Monster type and $X$ is closed. 
Let $\binom{X}{2}$ be the set of all unordered pairs of axes from $X$. We 
will view $\binom{X}{2}$ as a partially ordered set, where we have 
$(a,b)\leq(c,d)$ if and only if $a$ and $b$ are contained in the closure 
$\overline{\{c,d\}}=c^{\Miy(B)}\cup d^{\Miy(B)}$ of $\{c,d\}$. Here 
$B=\lla c,d\rra$. In particular, this implies that $\lla a,b\rra\subseteq
\lla c,d\rra$. 

\begin{definition}
The \emph{shape} of $A$ is the map attaching to each $(a,b)\in\binom{X}{2}$ 
the type of the subalgebra $\lla a,b\rra$ of $A$.
\end{definition}

The shape is a convenient invariant distinguishing different algebras of 
Monster type with the same Miyamoto group. Note that the first symbol (the 
number) in the Norton-Sakuma type of $\lla a,b\rra$ always coincides with 
the size of $\overline{\{a,b\}}$. Hence the only choices we have in the shape 
are the letters in the types, where they are not uniquely identified by the 
corresponding number. That is, $2A$ or $2B$, $3A$ or $3C$, and $4A$ or $4B$.

Recall that we view $\binom{X}{2}$ as a partially ordered set. If $(a,b)\leq 
(c,d)$ then, as we have already pointed out, $\lla a,b\rra$ is a subalgebra 
of $\lla c,d\rra$, which means that the type of $\lla c,d\rra$ determines 
the type of $\lla a,b\rra$. So the shape must be consistent in this sense 
with respect to the partial order on $\binom{X}{2}$.

In Table \ref{inclusions}, we list all possible inclusions between two 
Norton-Sakuma algebras.
\begin{table}[ht!]
\centering
\begin{tabular}{|c|c|}
\hline
Algebra & Subalgebras\\
\hline\hline
4A & 2B\\
4B & 2A\\
6A & 2A,3A\\
\hline
\end{tabular}
\caption{Inclusions among Norton-Sakuma algebras}
\label{inclusions}
\end{table}
We see from this table that dependences between the types of $2$-generated 
subalgebras work both ways. Namely, if $(a,b)\leq(c,d)$ then not only the 
type of $\lla c,d\rra$ determines the type of $\lla a,b\rra$, but also the 
type of $\lla a,b\rra$ determines the type of $\lla c,d\rra$. 

Of course, the set $\binom{X}{2}$ can be huge and so the shape contains a 
lot of repeated information. Namely, if $(a,b)$ and $(c,d)$ from 
$\binom{X}{2}$ are conjugate by an element of $\Miy(A)$ then $\lla a,b\rra$ 
and $\lla c,d\rra$ are conjugate and hence isomorphic. This means that we 
can fold the shape over the action of $\Miy(A)$ without losing any 
information.

In a text, we represent the (folded) shape simply as a sequence of 
Norton-Sakuma types from the shape. While not a perfect representation, 
this serves the purpose of distinguishing algebras and it does not lead 
to ambiguity in most cases. Furthermore, dependencies between the values 
of the shape map allow us to shorten the shape representation even further 
by indicating only the essential type choices. Below a shape is recorded 
as a bullet point followed by the list of essential choices of the 
Norton-Sakuma types. We introduce the bullet point in the beginning of 
the shape because we sometimes have situations with no choice at all, 
and this will be represented by just the bullet point.

One final remark about the shapes is that, when the map $a\mapsto\tau_a$ 
is injective on every $\Miy(A)$-orbit on $X$, the size of 
$\overline{\{a,b\}}$ coincides with the order $|\tau_a\tau_b|$, and this 
is an easier way to identify the number in the type of $\lla a,b\rra$. 
Note that this orbit injectivity condition is always satisfied in the 
algebras we investigate in this paper.  

\medskip
We will illustrate the concept of shape with the following examples.

\begin{example} \label{286 shape}
Let $A=A_{286}$ be the $286$-dimensional algebra with Monster fusion law for 
the sporadic simple group $G=M_{11}$. This algebra was first constructed 
by Seress \cite{Se} and then also confirmed using the expansion algorithm. 
The closed set of axes $X$ in this algebra consists of $165$ axes. Note 
that $G$ contains exactly $165$ involutions, 
forming a single conjugacy class $C$, and so it is no surprise that the 
map $a\mapsto\tau_a$ is a natural bijection between $X$ 
and $C$. This means that we can identify $X$ with $C$ and hence derive 
the shape of $A$ looking at the action of $G$ on $C$.

The group $G$ acts on $\binom{C}{2}$ of size $13\,530$ with six orbits of lengths 
$660$, $990$, $1980$, $1980$, $3960$ and $3960$. Hence the folded version 
of $\binom{C}{2}$ has only six nodes, and it is shown in Figure \ref{M11 
shape}.
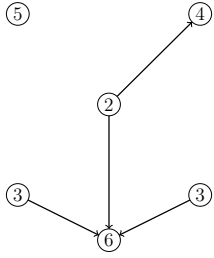
\begin{figure}[!htb]
\centering
\scalebox{0.6}{
\begin{tikzpicture}
\draw (0,0) circle (0.25cm);
\node (0,0) at (0,0){$2$};
\draw (-2,2) circle (0.25cm);
\draw (2,2) circle (0.25cm);
\draw (2,-2) circle (0.25cm);
\draw (-2,-2) circle (0.25cm);
\draw (0,-3) circle (0.25cm);
\node at (0,-3){$6$};
\node at (-2,2){$5$};
\node at (-2,-2){$3$};
\node at (2,2){$4$};
\node at (2,-2){$3$};
\draw[thick, ->] (0,-0.25)--(0,-2.75);
\draw [thick, ->] ({sqrt(2)/8}, {sqrt(2)/8})--({2-sqrt(2)/8},{2-sqrt(2)/8});
\draw [thick, ->] ({-2+sqrt(5)/10},{-2-sqrt(5)/20})--({-sqrt(5)/10},{-3+sqrt(5)/20});
\draw[thick,->] ({2-sqrt(5)/10},{-2-sqrt(5)/20})--({sqrt(5)/10}, {-3+sqrt(5)/20});
\end{tikzpicture}
}
\caption{The folded shape diagram of $A_{286}$}
\label{M11 shape}
\end{figure}
Each node in this figure contains the order $|ab|$, where the pair 
$(a,b)\in\binom{C}{2}$ represents the orbit corresponding to the node. This is the number from the 
Norton-Sakuma type for $(a,b)$.

Looking at the dependencies between the node types, since the bottom node 
must clearly be of Norton-Sakuma type $6A$, the adjacent nodes must be of 
types $3A$, $2A$, and $3A$. (See Table \ref{inclusions}.) Furthermore, 
the node above and adjacent to $2A$ must be $4B$. Finally, the isolated 
node is clearly $5A$. So we do not have any choices, and the algebra $A$
has the unique empty shape $\bullet$.
\end{example}

The further two examples deal with subalgebras of the $M_{11}$ algebra.

\begin{example}
The first subalgebra we consider here corresponds to a subgroup $H\cong 
L_2(11)$ of $G=M_{11}$. This subalgebra $A_{101}$, of dimension $101$, 
is generated by $55$ axes corresponding to the involutions contained in 
$H$. Clearly, $H$ is the Miyamoto group of $A_{101}$. It has six orbits 
on the set of pairs of involutions, with the folded partially ordered set 
shown in Figure \ref{L211 shape}.
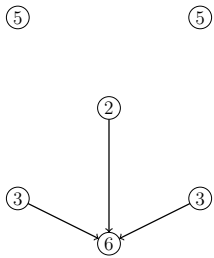
\begin{figure}[!h]
\centering
\scalebox{0.6}{
\begin{tikzpicture}
\draw (0,0) circle (0.25cm);
\node (0,0) at (0,0){$2$};
\draw (-2,2) circle (0.25cm);
\draw (2,2) circle (0.25cm);
\draw (2,-2) circle (0.25cm);
\draw (-2,-2) circle (0.25cm);
\draw (0,-3) circle (0.25cm);
\node at (0,-3){$6$};
\node at (-2,2){$5$};
\node at (-2,-2){$3$};
\node at (2,2){$5$};
\node at (2,-2){$3$};
\draw[thick, ->] (0,-0.25)--(0,-2.75);
\draw [thick, ->] ({-2+sqrt(5)/10},{-2-sqrt(5)/20})--({-sqrt(5)/10},{-3+sqrt(5)/20});
\draw[thick,->] ({2-sqrt(5)/10},{-2-sqrt(5)/20})--({sqrt(5)/10}, {-3+sqrt(5)/20});
\end{tikzpicture}
}
\caption{The folded shape diagram of $A_{101}$}
\label{L211 shape}
\end{figure}
Here again the shape $\bullet$ is unique, since all nodes, where we can have some 
choice, are adjacent to the bottom node carrying the number $6$. So it must be $6A$, 
forcing the adjacent nodes to be $3A$, $2A$, and $3A$, respectively. The two isolated 
nodes are both $5A$.
\end{example}

Note that the shape in this second example is unique regardless of the 
embedding of $A_{101}$ into $A_{286}$. The situation is different for our 
final example. 

\begin{example}
Here we consider the subalgebra $A_{76}$ corresponding to a subgroup 
$K=M_{10}$ of $G=M_{11}$. This $76$-dimensional subalgebra is generated by 
the $45$ axes corresponding to the involutions from $K$. Note that the 
Miyamoto group of $A_{76}$ is not $K$, but rather an index $2$ subgroup 
of $K$ isomorphic to the alternating group on six letters. This is because 
$K$ is not generated by its involutions.

The group $K$ has seven orbits in its action on the unordered pairs from its
set of $45$ involutions, as shown in Figure \ref{A76shape}. The orbits bearing
$2$ are both adjacent to the orbit carrying $4$, so if the Norton-Sakuma type
of the $4X$ orbit is $4A$, then the $2X$ orbits are both $2B$, and likewise,
we have both $2A$ if the Norton-Sakuma type of $4X$ is $4B$. The orbits with the
number $3$ can be $3A$ or $3C$, and {\it a priori}, we have no further information to
distinguish between the two options. The same applies for the choice $4A$ versus $4B$. 
Thus, without the knowledge of the embedding of $A_{76}$ in $A_{286}$, we have eight
possibilities for the shape, namely, $\bullet 3A3A4A$, $\bullet 3A3A4B$,
$\bullet 3A3C4A$, $\bullet 3A3C4B$, $\bullet 3C3A4A$, $\bullet 3C3A4B$,
$\bullet 3C3C4A$ and $\bullet 3C3C4B$. 
\begin{figure}[!h]
\centering
\begin{tikzpicture}
\draw  (0,0) circle (0.25cm) node (0,0){$4$};
\draw (-2,2) circle (0.25cm) node (-2,2){$2$};
\draw (2,2) circle (0.25cm) node (2,2){$2$};
\draw (-2,-2) circle (0.25cm) node (-2,-2){$5$};
\draw (2,-2) circle (0.25cm) node (2,-2){$5$};
\draw (2,0) circle (0.25cm) node (2,0){$3$};
\draw (-2,0) circle (0.25cm) node (-2,0){$3$};
\draw [thick,->] ({-2+sqrt(2)/8},{2-sqrt(2)/8})--({-sqrt(2)/8}, {sqrt(2)/8});
\draw [thick,->] ({2-sqrt(2)/8},{2-sqrt(2)/8})--({sqrt(2)/8}, {sqrt(2)/8});
\end{tikzpicture}
\caption{The folded shape diagram of $A_{76}$}
\label{A76shape} 
\end{figure}
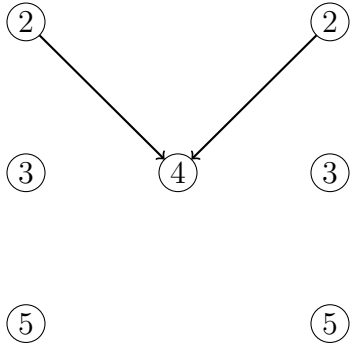
Hence the diagram gives us some true choices, which means there may be algebras with the same axet and 
Miyamoto group but of different shape---and in fact they do indeed exist. 

For our algebra $A_{76}$, knowing that it is a subalgebra of $A_{286}$, which contains no $3C$ and $4A$ in 
its shape, we can deduce that the actual shape of $A_{76}$ is $\bullet 3A3A4B$.
\end{example}

\subsection{Automorphism project} \label{package}

In this section we review our approach and the computational tools we developed for the automorphism project \cite{aut}. These tools are 
implemented in MAGMA \cite{MAGMA}. All our algebras are realised as general MAGMA algebras, although we had to 
implement our own subalgebra function because the one in MAGMA had a bug resulting in wrong results for non-associative algebras. 

The full descriptions and code for our routines are available on GitHub \cite{m11 aut}. Hence we only 
describe the main functionalities that we need to refer to below.

First of all, for smaller algebras we used the so-called \emph{nuanced algorithm}, which is fully automatic and 
can determine the full automorphism groups of algebras of Monster type $(\frac{1}{4},\frac{1}{32})$ of modest 
dimension. (The largest algebra this algorithm managed to complete was of dimension $36$.) We stress that 
this algorithm is specifically for the Monster fusion law, as it uses detailed information about the 
Norton-Sakuma algebras. For the details of the nuanced algorithm see \cite{aut}.

Our approach to finding the full automorphism group of the larger axial algebras $A$ is as follows. We first attempt to find additional axes in $A$: as twins of the known axes, as Jordan axes, and as axes corresponding to other involutions in the known group. We have efficient functions for all three of these cases, although it must be mentioned that a significant part of this efficiency comes from the assumption that the fusion law is of Monster type. Also, note that, since Jordan axes lead to additional automorphisms, we may end up with a larger automorphism group and hence the analysis of axes for known involutions should be performed after searching for Jordan axes.

We also have an efficient routine checking which automorphisms of the known automorphism group also act on $A$, which may also increase the known automorphism group.

Once we are satisfied that we likely know the full automorphism group, we switch to proving this statement. Namely, we select a small $2$-group $E$ and we aim to determine computationally the normaliser of $E$ in the full automorphism group of $A$. In practice, we aim to show that this normaliser coincides with the normaliser in the known automorphism group.

The main strategy for computing the normaliser of $E$ is decomposing the algebra $A$ with respect to the set  $Y=\{a_1,a_2,\ldots,a_m\}$ of axes corresponding to the involutions in $E$. This 
decomposition is into the direct sum of the joint eigenspaces
$$A_{(\lm_1,\lm_2,\ldots,\lm_m)}(Y)=A_{\lm_1}(a_1)\cap A_{\lm_2}(a_2)\cap\ldots\cap A_{\lm_m}(a_m)$$
of the adjoints of these axes, where $\lm_1,\lm_2,\ldots,\lm_m$ run through all combinations of eigenvalues from the fusion law; in our case they are from $\{1,0,\frac{1}{4},\frac{1}{32}\}$. 
We have a function computing this decomposition. Depending on the relation between the axes in $Y$, the sum of all joint eigenspaces may be a 
proper subspace $A_0$ of $A$, in which case we complement it by an additional term defined as the 
orthogonal complement $A_0^\perp$ in $A$ with respect to the Frobenius form. We note that the Frobenius form is positive definite in all algebras we consider here, and so $A_0^\perp$ is indeed a complement to $A_0$.

One of the summands of the decomposition, namely, the joint $0$-eigenspace $U=A_{(0,0,\ldots,0)}(Y)$, is always a subalgebra and the 
remaining pieces are modules for this subalgebra. Similarly, we have the part (or a combination of several parts) of the decomposition generated by the axes in $Y$ themselves, and that is also a subalgebra acting on the other summands. We have functions building and analysing these actions.

This decomposition is invariant under the full normaliser of $E$, up to a possible permutation of summands. Hence we also have functions building up automorphisms  gradually by extending them from $U$ to suitable additional parts of the decomposition treated as modules for $U$. Once we have extended a putative automorphism to a subspace that generates $A$, we use the special function that computes the action of this automorphism on the entire $A$. 

The difficult and time-consuming part of this approach is its starting point, namely, finding the full automorphism group of $U$. Of course, $U$ is much smaller than $A$. When possible, we find all idempotents in $U$ of a suitably chosen length by solving a system of linear and quadratic equations. 
If we have that these idempotents generate $U$, then  the automorphism group of $U$ acts faithfully on this set of idempotents and hence it can be identified in this way. This argument depends on the fact that automorphisms of $U$ preserve the available Frobenius form. We have a function that computes the space of all Frobenius forms on a given algebra. Once this space is $1$-dimensional, the form is preserved up to a scale and then having a special element, such as the identity, allows us to conclude that the Frobenius form is indeed preserved.

When $U$ is bigger, finding idempotents of a given length might be too difficult or impossible altogether. Then we simply identify a smaller invariant subalgebra $U_0$ of $U$ and we find $\Aut(U)$ by building automorphisms up starting from $U_0$ and then extending to the orthogonal complement of $U_0$ in $U$.

\medskip
This procedure has worked well in all the cases we attempted, allowing us to compute the normaliser of $E$ in $\Aut(A)$. As it happens, in all cases this normaliser coincided with the normaliser of $E$ in the known automorphism group. At this point, we switched to hand-made group-theoretic arguments analysing the possible structure of the finite group $\Aut(A)$ to deduce that it coincides with the known automorphism group. These proofs follow the same outline for all algebras $A$, focussing on the minimal normal subgroup $Q$ of $\Aut(A)$ and aiming to show that $Q$ cannot be elementary abelian using the $6$-transposition property of Miyamoto involutions in algebras of Monster type (see, for example, Corollary 2.10 in \cite{ kms}) and available representation theory of the known subgroup of $\Aut(A)$. Once $Q$ is known to be non-abelian, we reduce to the simple case and then identify $Q$ and $\Aut(A)$ using the known Sylow $2$-subgroup of $\Aut(A)$, found inside the normaliser of $E$.  

\medskip
To summarise, for larger algebras $A$, we use a hybrid approach combining a computation in MAGMA of the normaliser of $E$ with a subsequent proof of the fact that $\Aut(A)$ coincides with the known automorphism group. We have not encountered yet a situation where the normaliser of $E$ would be bigger that the known one, but if it had not been the case, we would have constructed additional automorphisms at this stage and a bigger known group, repeating our procedure after that. We also need to mention that the computational part of this approach is not fully automatic, as we need at various points to decide how to proceed: which idempotent lengths to look for, which parts of the decomposition to extend to, etc.

While this hybrid approach is similar to what we did in \cite{aut}, we had to further improve it to be able to handle the significantly larger algebra $A_{286}$. In particular, instead of $E\cong 2^2$ used in \cite{aut} and that we have here for $A_{76}$ and $A_{101}$, in the case of $A_{286}$ we focus on the non-abelian group $P\cong D_8$ instead. The general outline of the computational proof that the normaliser of $P$ in $\Aut(A_{286})$ is known, is still similar to the smaller cases, but the decomposition pieces here are much larger, and so we introduce significant new ideas for a further decomposition of the joint-zero subalgebra $U$. This further decomposition also makes our methods less reliant on the uniqueness of the Frobenius form on subalgebras and the heavy computations of all idempotents of given length. Finally, the group-theoretic part of the proof for $A_{286}$ is much more elaborate and introduces new computational methods that allow us to remove possible module structures on $Q$, in the case where $Q$ is elementary abelian, as some of those structures cannot be excluded by the more straightforward methods of \cite{aut}.

\section{Maximal subgroups $2\cdot S_4$, $S_5$, and $U_3(2){:}2$}\label{small groups}

In this section we describe algebras arising from the smaller maximal 
subgroups of $M_{11}$. All these algebras were handled
by the nuanced algorithm from \cite{aut}. 

\subsection{$2{\cdot}S_4$}

The maximal subgroup $H\cong 2{\cdot}S_4$ contains two classes of 
involutions, of lengths $1$ and $12$. The subalgebra generated by the 
$12$ axes corresponding to the involutions in the second orbit has 
dimension $17$ and hence it is denoted below as $A_{17}$.\footnote{So far as we know, this is a first appearance of
this algebra in a printed article; however, it was found earlier by J.~McInroy \cite{database} by an application
of the expansion algorithm.} It can be easily seen that the shape diagram of the group $H$ on this axet is 
connected and it contains a $6A$ node. Hence the shape $\bullet$ is unique, as 
shown in Fig \ref{A17shape}. 

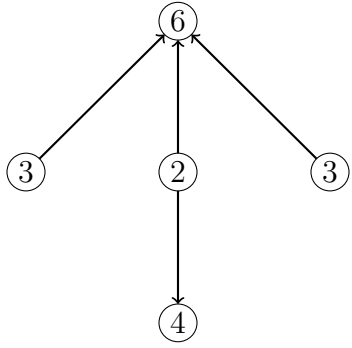
\begin{figure}[h]
\centering
\begin{tikzpicture}
\draw (-2,2) circle (0.25cm) node (-2,2){$3$};
\draw (2,2) circle (0.25cm) node (2,2){$3$};
\draw (0,4) circle (0.25cm) node (0,4){$6$};
\draw (0,2) circle (0.25cm) node (0,2){$2$};
\draw (0,0) circle (0.25cm) node (0,0){$4$};
\draw[thick, ->]({-2+sqrt(2)/8}, {2+sqrt(2)/8})--({-sqrt(2)/8},{4-sqrt(2)/8});
\draw[thick, ->]({2-sqrt(2)/8},{2+sqrt(2)/8})--({sqrt(2)/8}, {4-sqrt(2)/8});
\draw [thick,->](0,9/4)--(0,15/4);
\draw [thick,->](0,7/4)--(0,1/4);
\end{tikzpicture}
\caption{The shape diagram of $A_{17}$ on $12$ axes}
\label{A17shape}
\end{figure}

The algebra $A_{17}$ happens to also contain the axis $j$ 
corresponding to the central involution $z$ of $H$. Note that $z$ 
fixes all twelve generating axes, and hence it acts trivially on 
$A_{17}$. Thus, the Miyamoto group is just $H/\la z\ra\cong S_4$. This 
also means that $j$ is a Jordan axis in $A_{17}$, and as such, it gives an extra 
automorphism of order $2$ centralising $\Miy(A_{17})\cong S_4$. Using 
the nuanced algorithm, we checked that the resulting group $2\times 
S_4$ is the full automorphism group of $A_{17}$. We also checked 
computationally that $A_{17}$ is $2$-closed and it admits a unique, 
up to scale, Frobenius form. Clearly, it is the form inherited from 
$A_{286}$ and, in particular, it is positive definite.

The interesting feature of $A_{17}$ is that it contains twins, 
but they are within the same orbit of $12$ axes. This is a first 
example of this kind. The involution induced by $j$ switches 
twins in pairs. 

\subsection{$S_5$}

The maximal subgroup $H\cong S_5$ contains two orbits of involutions 
of lengths $10$ and $15$. The former orbit generates a subalgebra 
$A_{36}$ of dimension $36$, and this subalgebra also contains the 
orbit of length $15$.

The algebra $A_{36}$ is not new, it was mentioned in \cite{Se,axialconstruction}. Its full automorphism group was found in 
$\cite{aut}$, where it was shown, using the nuanced algorithm, that 
$\Aut(M_{36})=\Miy(A_{36})\cong S_5$. 

Note that $S_5$ has a disconnected shape diagram (see Figure 
\ref{S5_Shape}) and two possible shapes. The shape of 
$A_{36}$ on the axet $10+15$ is $\bullet 4B$, while it is $\bullet 3A2A$ on the axet 
$10$.\footnote{According to \cite{axialconstruction}, this case 
remains open, i.e., it is not known whether $A_{36}$ is the only 
algebra for $S_5$ with the axet of size $10$ and this shape.} The 
algebra $A_{36}$ is $2$-closed with respect to $10+15$ axes and it 
is $3$-closed with respect to $10$ axes. It admits a unique (positive definite) Frobenius 
form up to scale.

The orbit of length $15$ generates a $26$-dimensional subalgebra 
$A_{26}$, of shape $\bullet 3A2A$. The Miyamoto group $\Miy(A_{26})$ is 
isomorphic to $A_5$. The algebra $A_{26}$ is also not new. It has 
appeared in \cite{Se,axialconstruction}, while $\Aut(A_{26})$ was 
determined in \cite{aut}, namely, $\Aut(A_{26})\cong S_5$.

\begin{figure}[h]
\centering
\begin{tikzpicture}
\draw (0,-4) circle (0.25cm) node (0,-4){$5$};
\draw (-2,0) circle (0.25cm) node (-2,0){$3$};
\draw (2,0) circle (0.25cm) node (2,0){$3$};
\draw (0,0) circle (0.25cm) node (0,0){$6$};
\draw (2,-2) circle (0.25cm) node (2,-2){$2$};
\draw (-2,-2) circle (0.25cm) node (-2,-2){$2$};
\draw (0,-2) circle (0.25cm) node (0,-2){$2$};
\draw (-2,-4) circle (0.25cm) node (-2,-4){$4$};
\draw[thick,->](1.75,0)--(0.25,0);
\draw[thick,->](-1.75,0)--(-0.25,0);
\draw[thick,->](0,-1.75)--(0,-0.25);
\draw[thick,->](-2,-2.25)--(-2,-3.75);
\draw[thick,->]({2-sqrt(5)/10},{-2-sqrt(5)/20})--({-2+sqrt(5)/10},{-4+sqrt(5)/20});
\end{tikzpicture}
\caption{Shape diagram of $A_{36}$ on $10+15$ axes}
\label{S5_Shape}
\end{figure}
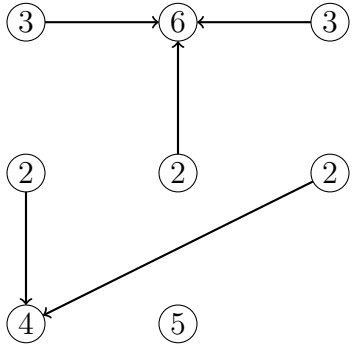

For completeness, let us mention the remaining shape $\bullet 4A$ of $S_5$ on the axet $10+15$. This completes to the unique algebra of dimension $61$, whose 
full automorphism group $2\times S_5$ (the factor $2$ induced by a Jordan 
axis) was determined in \cite{aut}.

\subsection{$U_3(2){:}2$}

Finally, let $H\cong U_3(2){:}2$. Then $H$ contains $21=9+12$ involutions. All these involutions generate a 
proper subgroup $H_0\cong S_3\wr 2$ of index $2$ in $H$. Under 
the action of $H_0$, we have three orbits of lengths, $6$, $6$, and $9$.

We will now report on the subalgebras that various combinations of 
the corresponding orbits on axes generate. The two orbits of length 
$6$ generate isomorphic subalgebras $A_{18}$ and $A'_{18}$ of 
dimension $18$. Both $A_{18}$ and $A'_{18}$ contain the orbit of 
length $9$. Furthermore, the subalgebra $A_{12}=A_{18}\cap A'_{18}$ 
is generated by this orbit. Lastly, the subspace 
$A_{24}=A_{18}+A'_{18}$ of dimension $18+18-12=24$ is in fact a 
subalgebra, and it is clearly the subalgebra generated by the two 
orbits of length $6$, as well as the entire set of $21=6+6+9$ 
axes. (See Figure \ref{A_24 Hasse}.) 

\begin{figure}[h]
\centering
\begin{tikzpicture}
\draw[thick] ({sqrt(2)/8},{2-sqrt(2)/8})--({2-sqrt(2)/8},{sqrt(2)/8});
\draw[thick] ({-sqrt(2)/8},{2-sqrt(2)/8})--({-2+sqrt(2)/8},{sqrt(2)/8});
\draw[thick] ({2-sqrt(2)/8},{-sqrt(2)/8})--({sqrt(2)/8},{-2+sqrt(2)/8});
\draw[thick] ({-2+sqrt(2)/8},{-sqrt(2)/8})--({-sqrt(2)/8},{-2+sqrt(2)/8});
\node at (0.05,2.1){$A_{24}$};
\node at (0.05,-2.1){$A_{12}$};
\node at (2.2,0){$A'_{18}$};
\node at (-2.2,0){$A_{18}$};
\end{tikzpicture}
\caption{The Hasse diagram of algebra inclusions in $A_{24}$}
\label{A_24 Hasse}
\end{figure}
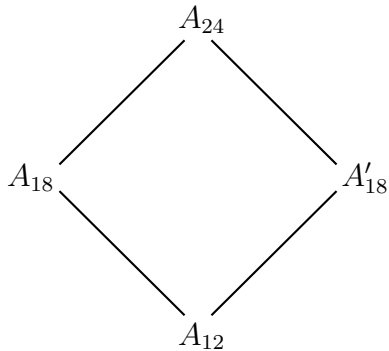

The algebras $A_{24}$ and $A_{12}$ are new. The algebra $A_{18}$ has not appeared in print before,
but it is contained in J. McInroy's database \cite{database} of algebras constructed using the expansion algorithm.

We now discuss these three algebras in turn. 

\medskip
We have mentioned above that $\Miy(A_{24})=H_0\cong S_3\wr 2$.
An application of the nuanced algorithm shows that $\Aut(A_{24})=H$, 
and in particular, $A_{24}$ contains no new axes. The shape diagram 
of $A_{24}$ with respect to the axet $6+6+9$ is shown in Figure 
\ref{shape_graphPSU} and it has two connected components, one of which has nodes of size $6$.

\begin{figure}[h]
	\centering
	\begin{tikzpicture}
		\draw (-2,2) circle (0.2cm) node (-2,2){$3$};
		\draw (0,2) circle (0.25cm) node (0,2){$3$};
		\draw (2,2) circle (0.25cm) node (2,2){$3$};
		%\draw (4,2) circle (0.25cm) node (4,2){$3$};
		\draw (-2,-2) circle (0.2cm) node (-2,-2){$6$};
		\draw (0,-2) circle (0.2cm) node (0,-2){$2$};
		\draw (2,-2) circle (0.2cm) node (2,-2){$2$};
		\draw (4,-2) circle (0.2cm) node (4,-2){$2$};
		\draw (0,0) circle (0.25cm) node (0,0){$6$};
		\draw (-2,0) circle (0.25cm) node (-2,0){$2$};
		\draw (4,0) circle (0.25cm) node (4,0){$4$};
		\draw (2,0) circle (0.25cm) node (2,0){$3$};
		\draw[thick,->] ({-2+sqrt(2)/8},{2-sqrt(2)/8})--({-sqrt(2)/8},{sqrt(2)/8});
		\draw[thick,->] ({2-sqrt(2)/8},{2-sqrt(2)/8})--({sqrt(2)/8},{sqrt(2)/8});
		\draw[->,thick] (1.75,0)-- (0.25,0);
                 \draw[->,thick](-2,-0.25)--(-2,-1.75);
                 \draw[->, thick](-1.75,0)--(-.25,0);
                 \draw[->, thick](-.25,-2)--(-1.75,-2);
		\draw[->,thick]({2+sqrt(2)/8},{-2+sqrt(2)/8})--({4-sqrt(2)/8},{-sqrt(2)/8});
		\draw[->,thick] (0,1.75)--(0,0.25);
		\draw[->,thick](0,-1.75)--(0,-0.25);
		\draw[->,thick](4,-1.75)--(4,-0.25);
	\end{tikzpicture}
	\caption{ The shape diagram of $A_{24}$ on $21$ axes}
	\label{shape_graphPSU}
\end{figure}
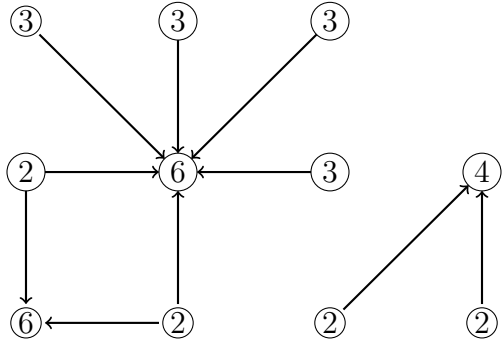
This means that there is a unique choice in this component.
The second component gives two possible choices for the shape, $\bullet 4A$ and 
$\bullet 4B$, and the algebra $A_{24}$ is of shape $\bullet 4B$. We mention for 
completeness that the algebra with the same axet and shape $\bullet 4A$ is known to 
be of dimension $45$ (see the database \cite{database}). Its full automorphism 
group is currently unknown, although it is bigger than $H_0$, as the algebra 
contains a unique Jordan axis. Also, due to the involution induced by this 
Jordan axis, both orbits in $6+6$ have twin orbits $6'+6'$, hence the total 
set of axes in this algebra contains at least $1+6+6+6'+6'+9=34$ axes. This algebra is very interesting, but we excluded it from the current project because it is not directly related to 
$A_{286}$ and $M_{11}$. Note though that we tried the nuanced algorithm on this algebra, but it was unsuccessful.

\medskip
If we instead consider the smaller axet $6+6$ then the shape diagram, shown 
in Figure \ref{A24 shape6_6}, has three components, each contributing two 
choices. 

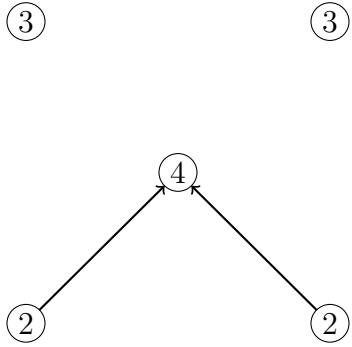
\begin{figure}[h]
	\centering
	\begin{tikzpicture}
		\draw (-2,2) circle (0.25cm) node (-2,2){$3$};
		\draw (2,2) circle (0.25cm) node (2,2){$3$};
		\draw (-2,-2) circle (0.25cm) node (-2,-2){$2$};
		\draw (2,-2) circle (0.25cm) node (2,-2){$2$};
		\draw (0,0) circle (0.25cm) node (0,0){$4$};
		\draw[thick,->]({2-sqrt(2)/8},{-2+sqrt(2)/8})--({sqrt(2)/8},{-sqrt(2)/8});
		\draw[thick,->]({-2+sqrt(2)/8},{-2+sqrt(2)/8})--({-sqrt(2)/8},{-sqrt(2)/8});
	\end{tikzpicture}
	\caption{The shape diagram of $A_{24}$ on $6+6$ axes}
	\label{A24 shape6_6}
\end{figure}
So, in total there are eight possible shapes, but only six if we take them up to isomorphism.
The shape of $A_{24}$ on $6+6$ is 
$\bullet 4B3A3A$. The algebra $A_{24}$ is $2$-closed for the axet $6+6+9$
and $3$-closed for $6+6$. Finally, $A_{24}$ admits a unique (positive definite)
Frobenius form up to scale.

It is not known whether $A_{24}$ is the only algebra with its axet and shape $\bullet 4B3A3A$. Also, the shape $\bullet 4A3A3A$ cannot currently be completed 
by the expansion algorithm, and so we do not know if any algebras of this shape exit. The four remaining shapes, $\bullet 4A3A3C$, $\bullet 4A3C3C$, $\bullet 4B3A3C$, and 
$\bullet 4B3C3C$, all collapse, so there exist no algebras of these shapes.

\medskip
The algebra $A_{18}$ is only invariant under $H_0$; however, its 
Miyamoto group is even smaller, namely, $K=\Miy(A_{18})\cong 
S_3{\times} S_3$. Under this group's action, the orbit $6$ splits as
$3+3$, while $9$ remains a single orbit. The shape diagram on $3+3+9$ 
is shown in Figure \ref{A18shape_15}. It allows four shapes. 
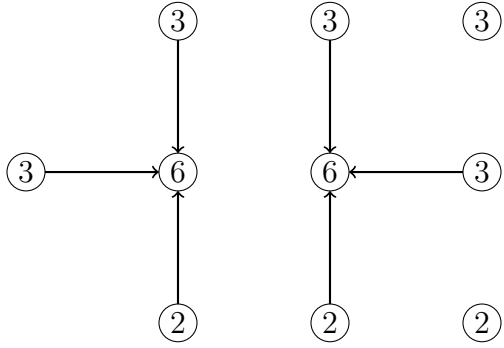
\begin{figure}[h]
	\centering
	\begin{tikzpicture}
		\draw (0,0) circle (0.25cm) node (0,0){$6$};
		\draw (2,0) circle (0.25cm) node (2,0){$6$};
		\draw (4,0) circle (0.25cm) node (4,0){$3$};
		\draw (0,2) circle (0.25cm) node (0,2){$3$};
		\draw (2,2) circle (0.25cm) node (2,2){$3$};
		\draw (4,2) circle (0.25cm) node (4,2){$3$};
		\draw (0,-2) circle (0.25cm) node (0,-2){$2$};
		\draw (2,-2) circle (0.25cm) node (2,-2){$2$};
		\draw (4,-2) circle (0.25cm) node (4,-2){$2$};
                \draw[->,thick](3.75,0)--(2.25,0);
                \draw [->, thick] (0, -1.75)--(0, -.25);
		\draw (-2,0) circle (0.25cm) node (-2, 0){$3$};
		\draw[->, thick] (-1.75,0)--(-.25, 0); 
		\draw [->, thick] (0, 1.75)--(0, .25);
		\draw [->, thick] (2, 1.75)--(2, .25);
                \draw [->, thick] (2, -1.75)--(2, -.25);
	\end{tikzpicture}
	\caption{The shape diagram of $A_{18}$ on $3+3+9$ axes}
	\label{A18shape_15} 
\end{figure}
The shape 
of $A_{18}$ is $\bullet 3A2A$ and it is the only algebra with this shape. Of 
the remaining three shapes, $\bullet 3C2A$ and $\bullet 3C2B$ collapse, while $\bullet 3A2B$ leads to a unique algebra of dimension $25$.
All of these appeared in \cite{axialconstruction}. The full automorphism groups of both $A_{18}$ 
and the $25$-dimensional algebra have been determined in \cite{aut} using 
the nuanced algorithm. In particular, $\Aut(A_{18})=H_0\cong S_3\wr 2$. 
Again, the Frobenius form on $A_{18}$ is positive definite and unique up to scale.

If we again remove the orbit $9$ and consider the axet $3+3$ for $A_{18}$ 
then the shape diagram is as in Figure \ref{A18shape_6}, which indicates eight possible shapes. However, up to isomorphism there are only six possible shapes on $3+3$. 
\begin{figure}[h]
	\centering
	\begin{tikzpicture}
		\draw (-2,0) circle (0.25cm) node (-2,0){$3$};
		\draw (0,0) circle (0.25cm) node (0,0){$3$};
		\draw (2,0) circle (0.25cm) node (2,0){$2$};
	\end{tikzpicture}
	\caption{The shape diagram of $A_{18}$ on $3+3$ axes}
	\label{A18shape_6}
\end{figure}
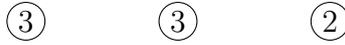
The algebra $A_{18}$ has shape $\bullet 3A3A2A$ and it is not known whether it is the only one with this shape. The shapes $\bullet 3A3C2A$ and $\bullet 3C3C2A$ collapse. 
The three shapes involving $2B$, namely, $\bullet 3A3A2B$, $\bullet 3A3C2B$, and $\bullet 3C3C2B$ 
all lead to direct sum algebras, $3A\oplus 3A$ (dimension $8$), $3A\oplus 3C$ (dimension $7$), and $3C\oplus 3C$ (dimension $6$), which are all well understood.

\medskip
Finally, the algebra $A_{12}$, generated by the orbit $9$, has Miyamoto group $L=3^2{:}2$.
Figure \ref{A12shape_9} represents the shape diagram of the corresponding axet. 
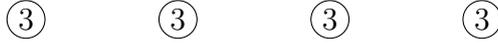
\begin{figure}[h]
	\centering
	\begin{tikzpicture}
		\draw (-2,0) circle (0.25cm) node (-2,0){$3$};
		\draw (0,0) circle (0.25cm) node (0,0){$3$};
		\draw (2,0) circle (0.25cm) node (2,0){$3$};
		\draw (4,0) circle (0.25cm) node (4,0){$3$};
	\end{tikzpicture}
	\caption{The shape diagram of $A_{12}$ on $9$ axes}
	\label{A12shape_9}
\end{figure}
Here we have four binary choices and hence $16$ shapes, but only five up to isomorphism: $\bullet 3A3A3A3A$, $\bullet 3A3A3A3C$, $\bullet 3A3A3C3C$, $\bullet 3A3C3C3C$, and $\bullet 3C3C3C3C$.
(Note that the automorphism group of the axet is $AGL(2,3)$.) The algebra $A_{12}$ is of shape $\bullet 3A3A3A3A$ and its full 
automorphism group, found by the nuanced algorithm, is $AGL(2,3)$. Here 
$L=\Miy(A_{12})$ has index $24$ in the full automorphism group, the biggest 
such index known to date. It is currently not known whether $A_{12}$ is the 
only algebra with its axet and shape.

The shape $\bullet 3A3A3A3C$ collapses. We have no further information about the 
shape $\bullet 3A3A3C3C$. The shape $\bullet 3A3C3C3C$ completes to a $12$-dimensional 
algebra, different from $A_{12}$ and appearing in the database 
\cite{database}, and $\bullet 3C3C3C3C$ to a $9$-dimensional algebra, this latter 
one being, clearly, the algebra of Jordan type $\frac{1}{32}$, i.e., the 
Matsuo algebra of the $3$-transposition group $L\cong 3^2{:}2$. 

For the $12$-dimensional algebra of shape $\bullet 3A3C3C3C$, the nuanced algorithm 
shows no further axes, which means that its automorphism group acts 
faithfully on the axet $9$ and the Miyamoto group is $L\cong 3^2{:}2$. This 
yields that the full automorphism group of this algebra coincides with the 
stabiliser in $\Aut(3^2{:}2)\cong AGL(3,2)$ of the shape, and so it is an extension 
of $L$ by $S_3$ with an overall structure $3{.}S_3^2$. The $9$-dimensional Matsuo
algebra for $L$ was similarly checked with the nuanced algorithm, which showed that the algebra contains no further axes. 
Therefore, the full automorphism of this algebra is simply $AGL(3, 2)$, as the 
shape $\bullet 3C3C3C3C$ is stabilised by this entire group. 

\medskip
The information about the algebras $A_{24}$, $A_{18}$ and $A_{12}$ is 
summarised in Table \ref{results}

\section{The $76$-dimensional algebra for $A_6$ of shape $\bullet 4B3A3A$} \label{M10 alg}

In this section we determine the full automorphism group of the $76$-dimensional algebra $A=A_{76}$ 
corresponding to the maximal subgroup $G_0\cong M_{10}$ of $M_{11}$. Note that $G_0$ induces a group 
of automorphisms of $A$, though the Miyamoto group of $A$ is just $M\cong A_6$ of index $2$ in $G_0$. 
The known axet $X$ in $A$ consists of $45$ axes. The algebra $A$ is $2$-closed and admits a unique (up to scale) positive 
definite Frobenius form, which is the restriction to $A$ of the Frobenius form of the larger
algebra $A_{286}$. Let $T\cong D_8$ be a Sylow $2$-subgroup of $M$, and let 
$E\cong 2^2$ be a subgroup of $T$. Set $a_1$, $a_2$ and $a_3$ to be the axes whose Miyamoto involutions 
lie in $E$. Let $Y=\{a_1,a_2,a_3\}.$ The axes in $Y$ span a subalgebra $V=\la 
Y\ra\cong 2A$. We record some basic facts about the algebra $A$ and its joint eigenspace decomposition with respect to $Y$.

\begin{computation}
\begin{enumerate}
\item The algebra $A$ does not have Jordan axes;
\item the $45$ axes in $X$ do not have twins; and
\item $G:=\Aut(M)$ acts on $A$.
\end{enumerate}
\end{computation}

Hence the group of known automorphisms of $A$ is $G=\Aut(A_6)\cong A_6.2^2$, which is twice bigger than $G_0$.
The group $G$ has three classes of involutions of sizes $45$, $30$ and $36$. We check whether 
the involutions in $G\setminus M$ (the last two classes) are not induced by axes.

\begin{computation}
Involutions in $G\setminus M$ do not correspond to axes of Monster type $\mathcal{M}\left(\frac{1}{4}, \frac{1}{32}\right)$ in $A$.
\end{computation}

We will now decompose $A$ with respect to $Y$.

\begin{computation} 
The joint eigenspace decomposition of $A$ corresponding to $Y$ has components as follows: 
\begin{enumerate}
\item $U:= A_{(0,0,0)}(Y)$ is of dimension $15$;
\item the remaining nonzero summands $A_{(\lm_1, \lm_2,\lm_3)}(Y)$ are:
\begin{enumerate}
\item $A_{\left(\frac{1}{4},\frac{1}{32},\frac{1}{32} \right)}(Y)$, $A_{\left(\frac{1}{32},\frac{1}{4},\frac{1}{32} \right)}(Y)$, 
and $A_{\left(\frac{1}{32},\frac{1}{32},\frac{1}{4} \right)}(Y)$, of dimension $5$ each and;
\item $A_{\left(0,\frac{1}{32},\frac{1}{32} \right)}(Y)$, $A_{\left(\frac{1}{32},0,\frac{1}{32} \right)}(Y)$, 
and $A_{\left(\frac{1}{32},\frac{1}{32},0\right)}(Y)$, each of dimension $11$. 
\end{enumerate}
\end{enumerate}
\end{computation}

Note that the sum of all these joint eigenspaces is not all of $A$, which is normal when we decompose a big algebra with
respect to a subalgebra $2A$ (c.f. \cite{aut}).

\medskip
We begin with determining $\Aut(U)$. Since the dimension of $U$ is not small enough, finding all idempotents 
in $U$ is hopeless with the computational tools we currently have. Hence we will explore ways of getting 
idempotents in $U$ of specific lengths, which then 
can be found fast. Let $N\cong 2\times S_4$ be the normaliser in $G$ of $E$ and consider the fixed subalgebra $U_N$ of 
$N$ in $U$. This subalgebra is of dimension $5$ and we can easily find all idempotents in it. It turns out that $U_N$ has
$24$ idempotents of distinct square lengths. In particular, there is a unique idempotent $d$ of length 
$2$ in $U_N$. 

\begin{computation}
\begin{enumerate}
\item Up to scale, $U$ admits a unique Frobenius form;
\item the idempotent $d$ is the only one in $U$ having length $2$;
\item $\rm{ad}_d$ has eigenvalues $1$, $0$, $\frac{1}{2}$ and $\frac{1}{8}$ of dimensions $1$, $34$, $9$ and $32$, respectively, in $A$; and $1$, $7$, $3$ and $4$ in $U$;
\item $d$ satisfies an almost Monster fusion law $\mathcal{AM}\left(\frac{1}{2},\frac{1}{8}\right)$ in $U$ and $A$. 
\end{enumerate}
\end{computation}

\begin{remark}
We observe that the almost Monster fusion law is $C_2$-graded and so we have a Miyamoto involution $z:=\tau_d$ corresponding to $d$.
This involution $z$ is contained in $G\setminus M$ in the class of size $30$. In addition, $A= \lla d^G\rra$ so that we can regard 
$A$ as an axial algebra with Miyamoto group $S_6$ on an axet of size $30$ with almost Monster $\mathcal{AM}\left(\frac{1}{2}, \frac{1}{8}\right)$ 
fusion law.
\end{remark}

Note also that, since $N$ fixes $d$, it centralises $z$, which means that $z$ is the central involution from $N$.

We will use $d$ to decompose $U$ to help us establish the full automorphism group of $U$. We start with $U'=U_0(d)$. 
First, we establish the following computational facts.

\begin{computation}\label{comp U0d}
\begin{enumerate}
\item Up to scale, $U'$ admits a unique Frobenius form;		
\item the subalgebra $U'$ has precisely three idempotents $v_1$, $v_2$ and $v_3$ 
of length $\frac{17}{5}$;
\item $\lla v_1,v_2,v_3\rra =U'$;
\item the adjoints of the $v_i$ have eigenvalues $1$, $0$, $\frac{1}{10}$, $\frac{13}{30}$, 
$\frac{1}{20}$ and $\frac{7}{20}$ with respective multiplicities $1$, $2$, $1$, $1$, $1$, and $1$; and 
\item idempotents $v_i$ satisfy a $C_2$-graded fusion law on $U$ with 
\[U'_+(v_i)=\bigoplus_{\lm\in \bigl\{1,0,\frac{1}{10},\frac{13}{30}\bigr\}}U'_\lm(v_i)\]
and
\[U'_-(v_i)=\bigoplus_{\lm\in \bigl\{\frac{1}{20},\frac{7}{20}\bigr\}} U'_\lm(v_i).\]
In addition, $\la \tau_{v_i}\mid i=1,2,3\ra\cong S_3$ on $U'$.
\end{enumerate}
\end{computation}

This gives us the following lemma. 

\begin{lemma}\label{autU'lemma}
The full automorphism group $\Aut(U')$ of $U'$ is isomorphic to $S_3$.
\end{lemma}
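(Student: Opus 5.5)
Here is the plan. I show two inclusions: $\Aut(U')$ embeds into $\mathrm{Sym}(\{v_1,v_2,v_3\})\cong S_3$, and conversely the group $\la \tau_{v_i}\mid i=1,2,3\ra$ of Computation \ref{comp U0d}(e) already realises all of $S_3$.

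For the upper bound, let $\phi\in\Aut(U')$. First I check that $\phi$ preserves the Frobenius form. By Computation \ref{comp U0d}(a) the form is unique up to scale, and $\phi$ transports a Frobenius form to a Frobenius form, so $(\phi(u),\phi(w))=c(u,w)$ for some nonzero scalar $c$. To pin down $c=1$ I need a distinguished nonzero element fixed by $\phi$ whose length is nonzero. The natural candidate is the identity of $U'$, if it exists. A cleaner route is to use $v_1$ directly. The element $\phi(v_1)$ is an idempotent, so its length is $c\cdot\frac{17}{5}$. The adjoint spectrum of $\phi(v_1)$ equals that of $v_1$, with the same multiplicities, since $\phi$ conjugates $\ad_{v_1}$ to $\ad_{\phi(v_1)}$. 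For a primitive idempotent $e$ with $\ad_e$ semisimple, the length $(e,e)$ can be recovered up to the scale $c$ but not determined absolutely, so this does not immediately fix $c$. I will therefore use the fact that $U'=U_0(d)$ is a subalgebra of $U$ of dimension $7$ and that $(e,e)$ equals the trace-type invariant $\mathrm{tr}(\ad_e)$ times a fixed constant. Such a constant exists because the Frobenius form is unique up to scale, and the form $(x,y)\mapsto\mathrm{tr}(\ad_{xy})$ (or $\mathrm{tr}(\ad_x\ad_y)$) is also associative. Hence the given form is a fixed multiple of an $\Aut(U')$-invariant form, so it is itself invariant and $c=1$. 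This step is the one I expect to be the main obstacle, and the fallback is a direct computation that the projection form is a nonzero multiple of the given one.

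With the form preserved, $\phi$ permutes the set of idempotents of length $\frac{17}{5}$. By Computation \ref{comp U0d}(b) this set is exactly $\{v_1,v_2,v_3\}$. Because $\lla v_1,v_2,v_3\rra=U'$ by Computation \ref{comp U0d}(c), an automorphism fixing all three $v_i$ is the identity. This gives an injective homomorphism $\Aut(U')\to S_3$.

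For the lower bound, I use the $C_2$-grading in Computation \ref{comp U0d}(e): each $v_i$ defines a Miyamoto-type automorphism $\tau_{v_i}$ of $U'$ acting as $\pm1$ on $U'_\pm(v_i)$. These automorphisms generate a group isomorphic to $S_3$, which lies in $\Aut(U')$. Combined with the embedding into $S_3$, this gives $\Aut(U')\cong S_3$. (Alternatively, the image of $\Aut(U')$ in $S_3$ is surjective because the $\tau_{v_i}$ act faithfully as transpositions on $\{v_1,v_2,v_3\}$.)
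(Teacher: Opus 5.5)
Your overall route is the paper's: $\Aut(U')$ permutes the three idempotents of length $\frac{17}{5}$, faithfully because they generate $U'$, and the $\tau_{v_i}$ already realise $S_3$. The paper's proof simply says automorphisms preserve length by Computation \ref{comp U0d}(a). The one step where you go beyond it, proving the scale $c$ equals $1$, does not work as written.

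The forms $\mathrm{tr}(\ad_{xy})$ and $\mathrm{tr}(\ad_x\ad_y)$ are $\Aut(U')$-invariant, but nothing makes them associative in a commutative non-associative algebra. The first would need $\mathrm{tr}\circ\ad$ to vanish on all associators $(xy)z-x(yz)$. The second would need $\mathrm{tr}(\ad_{xy}\ad_z)=\mathrm{tr}(\ad_x\ad_{yz})$, which is a Lie-algebra identity with no analogue here.

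The paper's own data shows the claim fails in $U\supset U'$. There the Frobenius form is also unique up to scale, and $(d,d)=2$ with $\mathrm{tr}(\ad_d|_U)=3$, while $(u_i,u_i)=\frac75$ with $\mathrm{tr}(\ad_{u_i}|_U)=\frac{43}{20}$. If $\mathrm{tr}(\ad_{xy})$ were Frobenius on $U$, the ratio $(e,e)/\mathrm{tr}(\ad_e)$ would be the same for every idempotent $e$. It is $\frac23$ for $d$ but $\frac{28}{43}$ for $u_i$. For $\mathrm{tr}(\ad_x\ad_y)$ the analogous ratios are $\frac{32}{29}\neq\frac{560}{513}$. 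So "$(e,e)$ is a fixed multiple of $\mathrm{tr}(\ad_e)$" does not follow from uniqueness of the form, and for $U'$ it is at best an unverified computation.

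The fix is the route you set aside: a nonzero vector fixed by every automorphism pins down $c$. The identity is the natural choice, and it is the device the paper's methods section describes. If $A$ is unital, then $\one_A-\one_V$ is the identity of $U$, since $\one_V\in\la a_1,a_2,a_3\ra$ annihilates $U$. Then $\one_{U'}:=\one_A-\one_V-d$ satisfies $d\,\one_{U'}=d-0-d=0$, so it lies in $U_0(d)=U'$, and it acts as the identity there. For any $\phi\in\Aut(U')$ we get $(\one_{U'},\one_{U'})=(\phi(\one_{U'}),\phi(\one_{U'}))=c\,(\one_{U'},\one_{U'})$, and positive definiteness forces $c=1$. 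If you do not want to assume $A$ unital, checking directly that $U'$ has an identity serves equally well. With this replacement the rest of your argument goes through exactly as in the paper.
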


\begin{proof}
By Computation \ref{comp U0d}(c), any automorphism $\phi\in \Aut(U')$ is completely determined by its 
effect on $v_1$, $v_2$ and $v_3$, since they generate $U'$. Part (e) shows that the tau maps $\tau_{v_i}$
induce $S_3$ on $\{v_1, v_2, v_3\}$, and hence on $U'$.

Every automorphism of $U'$ preserves the set $\{v_1,v_2,v_3\}$ since it preserves the length by Computation 
\ref{comp U0d}(a)and the $v_i$ are the only idempotents of length $\frac{17}{5}$ 
in $U'$ by part (b). Since the $\tau_{v_i}$ induce the full symmetric
group on $\{v_1, v_2,v_3\}$, $\Aut(U')=\la \tau_{v_i}\mid i=1,2,3\ra$. 
\end{proof}

Note that $\Aut(U)$ fixes $d$ and hence it acts on $U'$.

\begin{computation}\label{ext 7 to 15}
Let $W=U_{\frac{1}{8}}(d)$. Then the following holds.
\begin{enumerate}
\item $\lla W\rra =U$;
\item The identity automorphism on $U'$ extends as a $1$-dimensional space of extensions on $W$; and
\item for some nonzero element $w\in W$ and a nonzero $u'\in U'$, $(w^2, u')\ne 0$. 
\end{enumerate}
\end{computation}

\begin{lemma}\label{kernel U76}
The kernel of $\Aut(U)$ acting on $U'$ is $\la \tau_d\ra \cong C_2$.
\end{lemma}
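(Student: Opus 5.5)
Let $K$ denote the kernel of the restriction map $\Aut(U)\to\Aut(U')$. The proof has two parts: first show $\tau_d\in K$, then show $|K|\le 2$, using the decomposition of $U$ with respect to $d$.

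\emph{Step 1: $\tau_d\in K$ and $\tau_d\neq 1$.} By the computation on $d$ above, $d$ satisfies the almost Monster fusion law $\cAM\left(\frac{1}{2},\frac{1}{8}\right)$ in $U$. This law is $C_2$-graded, so $\tau_d$ restricted to $U$ is an automorphism of $U$. It acts as $1$ on $U_1(d)\oplus U_0(d)\oplus U_{\frac12}(d)$ and as $-1$ on $W=U_{\frac18}(d)$. In particular it acts trivially on $U'=U_0(d)$, so $\tau_d\in K$. Moreover $W\neq 0$, since it has dimension $4$, so $\tau_d|_U\neq 1$. Hence $\la\tau_d\ra\cong C_2$ is contained in $K$.

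\emph{Step 2: $|K|\le 2$.} Take $\phi\in K$. By Computation \ref{comp U0d}(a) and the first part of the computation on $d$, the Frobenius form on $U$ is unique up to scale. Since $\phi$ fixes $U'$ pointwise, it fixes the form value on $U'$, so $\phi$ preserves the form exactly. Next, $\phi$ fixes $d$: the idempotent $d$ is the unique idempotent of length $2$ in $U$, and $\phi$ preserves lengths. Therefore $\phi$ commutes with $\ad_d$ and preserves each eigenspace of $\ad_d$ in $U$, in particular $W$. Since $U'$ is a subalgebra and $W$ is a $U'$-module by the Seress property of the almost Monster law, the pair $(\phi|_{U'},\phi|_W)=(\mathrm{id},\phi|_W)$ is an extension of the identity on $U'$ to $W$, compatible with the module action. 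By Computation \ref{ext 7 to 15}(b) the space of such extensions is $1$-dimensional, so $\phi|_W=c\cdot\mathrm{id}_W$ for some scalar $c$.

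To pin down $c$, take $w\in W$ and $u'\in U'$ with $(w^2,u')\neq0$, which exist by Computation \ref{ext 7 to 15}(c). By the fusion law, $w^2\in U_{\{1,0,\frac12\}}(d)$. Because $\phi$ preserves the form and fixes $u'$,
$$(w^2,u')=(\phi(w)^2,\phi(u'))=c^2(w^2,u'),$$
so $c^2=1$ and $c=\pm1$. By Computation \ref{ext 7 to 15}(a), $W$ generates $U$, so $\phi$ is determined by $c$: $c=1$ gives the identity and $c=-1$ gives $\tau_d$. Hence $K=\la\tau_d\ra$.

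The main obstacle is checking that $\phi$ genuinely restricts to one of the extensions counted in Computation \ref{ext 7 to 15}(b). That count has to be an extension of the $U'$-module action on $W$, which relies on $\phi$ preserving $W$, and hence on $\phi$ fixing $d$. The other delicate point is the form-preservation argument used to fix the scalar $c$. If the fixed-$u'$ argument feels shaky, an alternative is to compare $\phi(w^2)=c^2w^2$ with the $U'$-component of $w^2$ directly, since $\phi$ is the identity on $U'$.
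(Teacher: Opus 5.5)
Your proof is correct and follows the paper's argument. The common steps are: a scalar action on $W=U_{\frac{1}{8}}(d)$ from Computation~\ref{ext 7 to 15}(b), then $\lambda^2=1$ from $(w^2,u')\neq 0$ in part (c), then generation of $U$ by $W$, and finally $\tau_d$ realising the nontrivial case. You also make explicit two points the paper leaves implicit: that $\phi$ preserves the form exactly because it fixes $U'$ pointwise, and that $\phi$ preserves $W$ because it fixes the unique length-$2$ idempotent $d$. This tightens the paper's argument rather than changing its route.
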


\begin{proof}
Suppose that $\phi\in \Aut(U)$ acts as identity on $U'$. By Computation \ref{ext 7 to 15} (b), 
$\phi_{|_W}=\lm\rm{id}_W$, for some scalar $\lm$. Part (c) then gives $0\ne (w^2, u')$ so 
that $0\ne (w^2, u')=((w^2)^\phi, u'^\phi)=((w^\phi)^2, u')=((\lm w)^2,u')=\lm^2(w^2,u')$. 
We conclude that $\lm ^2=1$ and $\lm =\pm 1$. This shows that the kernel of $\Aut(U)$ acting on $U'$ 
has order at most $2$. On the other hand, $\tau_d$ is in this kernel, and so the claim holds.
\end{proof}

We need the following further computational facts to determine $\Aut(U)$. 

\begin{computation}\label{seven fifths}
\begin{enumerate}
\item $U$ has exactly three idempotents $u_1$, $u_2$ and $u_3$ of length $\frac{7}{5}$;
\item the adjoints of the $u_i$ have eigenvalues $1$, $0$, $\frac{3}{10}$ and $\frac{1}{20}$ with multiplicities $1$, $6$, 
$3$ and $5$, respectively;
\item the $u_i$ satisfy the almost Monster fusion law $\mathcal{AM}\left(\frac{3}{10},\frac{1}{20}\right)$ in $U$; and
\item  $\la \tau_{u_i}\mid i=1,2,3\ra \cong S_3$.
\end{enumerate}
\end{computation}

This gives us the following lemma. 

\begin{lemma}
$\Aut(U)\cong 2\times S_3$.
\end{lemma}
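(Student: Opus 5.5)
The plan is to squeeze $\Aut(U)$ between the restriction homomorphism to $U'$ and explicit automorphisms constructed from the idempotents $u_i$. First, every automorphism of $U$ preserves the Frobenius form, which is unique up to scale on $U$ by the computation on $d$ (part (a)). Because $U$ is unital (or, in any case, contains $d$, which every automorphism must fix), the scale factor is $1$. Hence $\Aut(U)$ fixes $d$, the unique idempotent of length $2$ in $U$, and therefore stabilises $U'=U_0(d)$. Restriction then gives a homomorphism $\rho\colon \Aut(U)\to\Aut(U')$.

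By Lemma \ref{autU'lemma}, the image of $\rho$ lies in $\Aut(U')\cong S_3$. By Lemma \ref{kernel U76}, $\ker\rho=\la\tau_d\ra\cong C_2$. So $|\Aut(U)|\le 12$, and $\Aut(U)$ is an extension of a subgroup of $S_3$ by $C_2$.

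For the lower bound, I will use Computation \ref{seven fifths}. The $u_i$ are almost Monster axes in $U$, and $\cAM(\frac{3}{10},\frac{1}{20})$ is $C_2$-graded, so the $\tau_{u_i}$ are automorphisms of $U$. They generate a subgroup $S\cong S_3$ of $\Aut(U)$. Next I must check that $\tau_d$ commutes with the $\tau_{u_i}$ and does not lie in $S$.

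\begin{itemize}
\item \emph{Commuting.} The kernel $\la\tau_d\ra$ is normal of order $2$, hence central. Alternatively, each $\tau_{u_i}$ fixes $d$ and so conjugates $\tau_d$ to $\tau_{d^{\tau_{u_i}}}=\tau_d$.
\item \emph{Not in $S$.} I want $S\cap\la\tau_d\ra=1$, i.e.\ that $\rho$ is injective on $S$. If not, $\ker\rho|_S$ would be a nontrivial normal subgroup of $S_3$ of order at most $2$, which does not exist. Hence $S\cap\ker\rho=1$.
\end{itemize}

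Therefore $\rho(S)\cong S_3$ is all of $\Aut(U')$. Thus $\Aut(U)=\la\tau_d\ra\times S\cong 2\times S_3$, of order exactly $12$.

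The main obstacle is confirming that $\tau_d$ is nontrivial on $U$, so that the kernel really has order $2$. This follows from the multiplicity $4$ of the eigenvalue $\frac{1}{8}$ for $\ad_d$ on $U$, which gives $W\neq 0$.

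A second point needs care: preservation of the form and of $d$ must come before using $U'$. Since the form is unique only up to scale, I will normalise using the invariance of the identity of $U$ (or the idempotent lengths) to conclude that automorphisms are isometries. This is the argument already used implicitly in Lemma \ref{autU'lemma}.
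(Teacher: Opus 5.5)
Your proof is correct and follows the paper's argument: the bound $|\Aut(U)|\le 12$ comes from Lemmas \ref{autU'lemma} and \ref{kernel U76}, and the lower bound from $\la\tau_{u_1},\tau_{u_2},\tau_{u_3}\ra\cong S_3$ together with $\tau_d$, which is central because $d$ is the unique idempotent of length $2$. Your explicit check that $\la\tau_d\ra\cap\la\tau_{u_1},\tau_{u_2},\tau_{u_3}\ra=1$ (as $S_3$ has no normal subgroup of order $2$) fills a step the paper leaves implicit; one caveat is that your parenthetical fallback of fixing the form's scale via $d$ is circular, since $d$ being fixed already assumes the form is preserved, so rely on the identity of $U$, as the paper's general method does.
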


\begin{proof}
By Computation \ref{seven fifths}(d),  $\la \tau_{u_i}\mid i=1,2,3\ra$. Since $d$ is unique in $U$ of its length,
$\tau_d$ is central in $\Aut(U)$, giving us a subgroup $2\times S_3$ of $\Aut(U)$. On the other hand, by Lemmas 
\ref{autU'lemma} and \ref{kernel U76}, the order of $\Aut(U)$ does not exceed $12$, so the claim follows.
\end{proof}

\begin{remark}
Alternatively, instead of this proof, we could have simply extended the known automorphisms of $U'$ to $W$
to show that each of them has exactly two extensions.
\end{remark}

We now consider an automorphism $\phi$ acting trivially on $U$ and we want to establish how it can extend to the entirety of $A$. The focus will be on the $11$-dimensional components $W_1= A_{\left(0, \frac{1}{32}, \frac{1}{32}\right)}(Y)$, $W_2= A_{\left(\frac{1}{32}, 0,\frac{1}{32}\right)}(Y)$, and $W_3= A_{\left(\frac{1}{32},\frac{1}{32},0\right)}(Y)$. We first show that each extension $\phi$ leaves them invariant.

Recall that each $W_i$ is a module for $U$.

\begin{computation}\label{matching vi}
Up to reordering $v_1$, $v_2$ and $v_3$, we have that $v_i$ acting on $W_i$ has eigenvalues $\frac{1}{5}$, $\frac{3}{10}$, $\frac{1}{20}$, $\frac{9}{20}$, $\frac{19}{20}$, $\frac{1}{30}$, and $\frac{23}{60}$, 
with multiplicities $1$, $2$, $4$, $1$, $1$, $1$, and $1$; on the other two $W_j$, 
$v_i$ has eigenvalues  $\frac{1}{32}$, $\frac{37}{160}$, $\frac{53}{160}$, $\frac{117}{160}$, $\frac{31}{480}$, and $\frac{79}{480}$, 
with multiplicities $2$, $2$, $4$, $1$, $1$, and $1$,  respectively.
\end{computation}

This means that we have a natural matching between the idempotents $v_1$, $v_2$, and $v_3$ and the components $W_1$, $W_2$ and $W_3$. In particular, since $\phi$ fixes $v_1$, $v_2$, and $v_3$, we have the following.

\begin{lemma}
Every $\phi\in\Aut(A)$ that normalises $E$ and acts as identity on $U$ preserves $W_1$, $W_2$, and $W_3$.
\end{lemma}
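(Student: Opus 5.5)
The plan is to use that $\phi$ normalises $E$ and hence permutes the joint eigenspace decomposition of $A$ with respect to $Y$, and then pin down which permutation occurs using the module structure over $U$. Since $\phi$ normalises $E$, it permutes the three Miyamoto involutions in $E$. The axes $a_1,a_2,a_3$ have no twins by the first computation of this section, so $\phi$ permutes $Y$. Say $a_i^\phi=a_{\pi(i)}$ for some permutation $\pi$. Because $\phi$ is an automorphism, $(A_\lm(a_i))^\phi=A_\lm(a_{\pi(i)})$, so $\phi$ sends each joint eigenspace $A_{(\lm_1,\lm_2,\lm_3)}(Y)$ to the joint eigenspace with permuted indices. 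In particular $U^\phi=U$, and $\phi$ permutes the set $\{W_1,W_2,W_3\}$ via $W_i^\phi=W_{\pi(i)}$.

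It remains to show that $\pi$ is trivial, and for this we use the action of $U$ on the $W_i$. For $u\in U$ and $w\in W_i$ we have $(uw)^\phi=u^\phi w^\phi=u\,w^\phi$, since $\phi$ is trivial on $U$. Hence $\phi|_{W_i}\colon W_i\to W_{\pi(i)}$ intertwines the action of $\ad_u$ for every $u\in U$. Apply this with $u=v_i$. The spectrum of $\ad_{v_i}$ on $W_i$ must then equal its spectrum on $W_{\pi(i)}$.

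By Computation~\ref{matching vi}, the spectrum of $v_i$ on $W_i$ is different from its spectrum on $W_j$ for $j\neq i$. For example, the eigenvalue $\frac{1}{5}$ occurs on $W_i$ but not on the other two. Therefore $\pi(i)=i$ for each $i$, and so $W_i^\phi=W_i$.

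The only delicate point is the first step, namely that $\phi$ really permutes the axes in $Y$ and not just their involutions. This is where the absence of twins is needed, and it is also what makes $\phi$ carry joint eigenspaces to joint eigenspaces. Everything after that is a direct spectral comparison.
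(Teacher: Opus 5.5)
Your proof is correct and follows the paper's argument. Since $\phi$ fixes each $v_i\in U'\subseteq U$, it can only carry $W_i$ to a summand on which $\ad_{v_i}$ has the same spectrum, and Computation~\ref{matching vi} singles out $W_i$. You also make explicit, via the absence of twins, why $\phi$ permutes the joint eigenspaces in the first place, a step the paper leaves implicit.
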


We now investigate how $\phi$ extends to these components.

\begin{computation}\label{extensions A6}
\begin{enumerate}
\item The identity automorphism on $U$ admits $1$-dimensional spaces of extensions on each $W_i$;
\item for any pair $\{i, j\}\subset \{1,2,3\}$, $\lla W_i, W_j\rra =A$;
\item for some nonzero elements $w_i\in W_i$ and $u\in U$, we have the following:
\begin{enumerate}
\item $(w_i^2, u)\ne 0$ and;
\item $(w_1w_2, w_3) \ne 0$.
\end{enumerate}
\end{enumerate}
\end{computation}

We now explore the consequences of the above computations. By Computation \ref{extensions A6} (a), $\phi$ acts as a scalar $\nu_i$ on 
$W_i$. Let $\hat{N} =N_{\Aut(A)}(E)$ be the normaliser in $\Aut(A)$ of $E$. The computation above leads us to the following.

\begin{lemma}\label{kernel N76}
The kernel $\hat K$ of $\hat N$ acting on $U$ is $E$. 
\end{lemma}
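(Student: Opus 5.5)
We first note that $E\le\hat K$. Every element of $E$ is either the identity or a Miyamoto involution $\tau_{a_i}$ with $a_i\in Y$, and $\tau_{a_i}$ acts trivially on $A_{\{1,0,\frac14\}}(a_i)$. Hence it acts trivially on $U=A_{(0,0,0)}(Y)$. For the reverse inclusion, let $\phi\in\hat K$. By the preceding lemma, $\phi$ preserves $W_1$, $W_2$ and $W_3$. By Computation \ref{extensions A6}(a), $\phi$ acts on each $W_i$ as a scalar $\nu_i$.

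Next we restrict the scalars. Take $w_i\in W_i$ and $u\in U$ as in Computation \ref{extensions A6}(c). The form is the unique Frobenius form up to scale and is preserved by automorphisms; the justification is as in \cite{aut}, using that the identity of $A$, or the set of axes of length $1$, is preserved. Then
\[
(w_i^2,u)=((w_i^2)^\phi,u^\phi)=\nu_i^2(w_i^2,u).
\]
This forces $\nu_i^2=1$, so $\nu_i=\pm1$. Applying the same argument to $(w_1w_2,w_3)\neq0$ gives $\nu_1\nu_2\nu_3=1$. So $(\nu_1,\nu_2,\nu_3)$ lies in a group of order $4$, namely the sign vectors with product $1$.

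Now we show that $\phi$ is determined by this triple. By Computation \ref{extensions A6}(b), $W_1$ and $W_2$ already generate $A$, so $\phi$ is determined by $\nu_1$ and $\nu_2$. Hence $|\hat K|\le 4$.

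Finally we check that $E$ realises all four triples. The involution $\tau_{a_1}$ acts as $-1$ on the $\frac1{32}$-eigenspace of $a_1$, and as $+1$ on the $0$- and $\frac14$-eigenspaces. So on $(W_1,W_2,W_3)$ it acts as $(1,-1,-1)$. Similarly, $\tau_{a_2}$ gives $(-1,1,-1)$ and $\tau_{a_3}$ gives $(-1,-1,1)$. Together with the identity, these are exactly the four admissible triples. Since $|E|=4\ge|\hat K|$ and $E\le\hat K$, we conclude $\hat K=E$.

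The main obstacle is the middle step: justifying that an arbitrary $\phi\in\hat N$ preserves the Frobenius form exactly, not just up to scale. This is needed before the equations $\nu_i^2=1$ can be extracted, and it should follow from uniqueness of the form together with a preserved element of nonzero length. Everything else is bookkeeping on the computations already recorded.
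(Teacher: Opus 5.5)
Your proof is correct and follows essentially the same route as the paper. You get scalars $\nu_i$ on the $W_i$ from Computation \ref{extensions A6}(a), then $\nu_i^2=1$ and $\nu_1\nu_2\nu_3=1$ from part (c), and identify the four surviving sign triples with $1,\tau_{a_1},\tau_{a_2},\tau_{a_3}$; beyond the paper, you make explicit that $E\le\hat K$ and that $\phi$ is determined by its scalars via part (b). The form-preservation worry you flag is harmless here: $\phi\in\hat K$ fixes $U$ pointwise, and $U$ contains vectors of nonzero length, so the scaling factor allowed by uniqueness of the form must be $1$.
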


\begin{proof}
Let $\phi\in \hat K$. By Computation \ref{extensions A6} (c)(i), $0\ne (w_i^2, u)=((w_i^2)^\phi, u^\phi)=((w_i^\phi)^2, u)=((\nu_i w_i)^2, u)=\nu_i^2(w_i^2, u)$. It follows that $\nu_i^2=1$ and so $\nu_i=\pm 1$. 

In a similar manner, Computation \ref{extensions A6} (c)(ii) gives $0\ne (w_1w_2, w_3)=((w_1w_2)^\phi, w_3^\phi)=(w_1^\phi w_2^\phi, \nu_3 w_3)=((\nu_1 w_1)(\nu_2 w_2), \nu_3 w_3)=\nu_1\nu_2\nu_3(w_1w_2, w_3)$ from which we conclude that $\nu_1\nu_2\nu_3 =1$.
This means that in the triples $(\nu_1,\nu_2,\nu_3)$ negative values appear in even numbers. Thus, $(\nu_1, \nu_2,\nu_3)$ lies in the set 
$$\{(1,1,1), (1, -1, -1), (-1, 1,-1), (-1, -1,1) \}\cong 2^2.$$
The first element corresponds to the identity, the second to $\tau_{a_1}$, the third to $\tau_{a_2}$ and the fourth to $\tau_{a_3}$. Thus $\hat K =E$ as claimed. 
\end{proof}

From the lemma above, the following holds.

\begin{corollary}
$\hat N =N$.
\end{corollary}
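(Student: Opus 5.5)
We need to show $\hat N=N$, where $N=N_G(E)\cong 2\times S_4$ is the normaliser of $E$ in the known group $G=\Aut(A_6)$. Since $N\le\hat N$, it is enough to show $|\hat N|\le|N|=48$. The plan is to bound $|\hat N|$ by counting: first the image of $\hat N$ acting on $U$, then the kernel of that action.

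First we check that $\hat N$ acts on $U$. Every $\phi\in\hat N$ normalises $E$, so it permutes the Miyamoto involutions in $E$. Computation (b) says the axes in $X$ have no twins, and Computation (a) says there are no Jordan axes. Hence the only axes of $A$ with Miyamoto involutions in $E$ should be $a_1,a_2,a_3$, so $\phi$ permutes $Y=\{a_1,a_2,a_3\}$. This needs a small argument: $\phi$ sends an axis of $X$ to an axis, and an axis whose involution equals $\tau_{a_i}$ must be $a_i$ itself, since there are no twins. Consequently $\phi$ permutes the joint eigenspaces and fixes the joint $0$-eigenspace $U=A_{(0,0,0)}(Y)$. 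This gives a restriction homomorphism $\hat N\to\Aut(U)$.

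Next we bound the two pieces. The image of $\hat N$ in $\Aut(U)$ has order at most $|\Aut(U)|=12$, by the lemma $\Aut(U)\cong 2\times S_3$. The kernel of the restriction is $\hat K=E$ by Lemma \ref{kernel N76}. Therefore $|\hat N|\le 12\cdot 4=48=|N|$, and so $\hat N=N$.

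As a consistency check, we should confirm that $N$ really has order $48$ and acts on $U$ with kernel $E$, so that its image in $\Aut(U)$ is all of $2\times S_3$. Indeed $N/E\cong 2\times S_3$ has order $12$. The central involution $z=\tau_d$ of $N$ acts nontrivially on $U$, since it is $-1$ on $W=U_{\frac18}(d)$. The main subtle step is the first one, that $\hat N$ stabilises $Y$. If the no-twins statement only covers the $45$ known axes, we argue instead that an axis $b$ with $\tau_b=\tau_{a_i}$ would be an axis outside $X$ whose involution lies in $M$. Such an axis is excluded by the twin computation, which searched all of $A$ for twins of the known axes.
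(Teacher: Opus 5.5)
Your argument is correct and is essentially the paper's proof. The paper combines $\hat K=E$ (Lemma \ref{kernel N76}) with the fact that $N/E\cong 2\times S_3$ already realises all of $\Aut(U)$, and this is the same as your count $|\hat N|\le 12\cdot 4=48=|N|$; your use of the no-twins computation to show that $\hat N$ permutes $Y$, and hence acts on $U$, makes explicit a step the paper leaves implicit.
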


\begin{proof}
Since $\hat K =E$ by Lemma \ref{kernel N76}, $N\leq \hat N$ induces $N/E\cong 2\times S_3$ on $U$ which is the full automorphism group of $U$ so the result follows.
\end{proof}

We will use group-theoretic arguments to establish the ultimate theorem of this section.

\begin{theorem}
We have $\Aut(A_{76}) =G\cong\Aut(A_6)$.
\end{theorem}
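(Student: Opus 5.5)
We argue group-theoretically, with $\hat G=\Aut(A)$ and the known subgroup $G\cong\Aut(A_6)\le\hat G$. By \cite{aut}, $\hat G$ is finite. The key input is the preceding Corollary: $N_{\hat G}(E)=N\cong 2\times S_4$, of order $48$.

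First we pin down the Sylow $2$-subgroups. A Sylow $2$-subgroup $S$ of $G$ has order $32$ and contains a normal four-group. Such a four-group is conjugate in $G$ to $E$, since $E$ is the four-group generated by Miyamoto involutions and the relevant four-groups of $A_6$ are fused in $\Aut(A_6)$. Hence $S\le N_{\hat G}(E)=N$, so $S$ is a Sylow $2$-subgroup of $N$. If $S$ were not Sylow in $\hat G$, then $N_{\hat G}(S)>S$ would contain a $2$-element $g\notin S$ normalising $S$. The four-groups normal in $S$ are permuted by $g$; since $E$ is characteristic in $S$ (for instance as the intersection of the Miyamoto-type four-groups normal in $S$, or via $J(S)$), $g$ normalises $E$, forcing $g\in N$. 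But $|N|_2=|S|$, a contradiction. So $S$ is a Sylow $2$-subgroup of $\hat G$, and $|\hat G|_2=32$.

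Next let $Q$ be a minimal normal subgroup of $\hat G$. Suppose first that $Q$ is elementary abelian of order $p^k$. If $p=2$, then $Q\le S$ is normal, and $[Q,E]$ together with the centraliser structure of $N$ forces $Q\le Z(N)\cap$ something small. More directly, $QE$ is a $2$-group normalised by $M$, whose normal closure is not a $2$-group since $M=\Miy(A)\cong A_6$ is simple; this gives a contradiction. If $p$ is odd, $M$ acts on $Q$. Since $M\cong A_6$ is simple and $C_Q(E)$ is normalised by $N$, whose odd part is only $3$, we get $|C_Q(E)|\le 3$ with a compatible $S_3$-action. Coprime action of $E$ gives $Q=\langle C_Q(e)\mid e\in E^\#\rangle$ and $|Q|\cdot|C_Q(E)|^2=\prod_e|C_Q(e)|$. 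In addition, Miyamoto involutions $\tau_a$ invert elements of $Q$ in a way incompatible with the $6$-transposition property: for $q\in Q$ inverted by $\tau_a$, the product $\tau_a\tau_a^q=\tau_a q^{-2}$-type elements have $p$-power order, which must be at most $6$, so $p\in\{3,5\}$. The module dimensions are then bounded by the centraliser data. We would check the small $\F_3M$- and $\F_5M$-modules to rule them out, using that a nontrivial module for $A_6$ has dimension at least $4$ and the resulting $C_Q(E)$ would be too large.

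Therefore $Q$ is a product of non-abelian simple groups. Since $|\hat G|_2=32$ and $E\cong 2^2$ is self-centralising in a suitable sense, $Q$ is simple. Moreover, $C_{\hat G}(Q)$ is normal with trivial intersection with $Q$; it is handled by the same arguments, which force $C_{\hat G}(Q)=1$, so $\hat G\le\Aut(Q)$. Now $Q\cap G$ is normal in $G$. If $Q\cap G=1$, then $|\hat G|_2\ge |Q|_2\,|G|_2/|\ldots|$ is too big, so $M\le Q$. Then $Q$ is a simple group with Sylow $2$-subgroup of order at most $32$, in which some involution has centraliser with $2$-part controlled by $N$ (the centraliser of the central involution $z$ has order $48$ inside $N$). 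By the classification of simple groups with such small $2$-local structure (Brauer--Suzuki--Wall/Gorenstein--Walter for dihedral Sylow $2$-subgroups of order $8$), $Q\cong A_6$, $L_2(q)$ or $A_7$. Matching $|N_Q(E)|$ forces $Q=M\cong A_6$, and then $\hat G\le\Aut(A_6)=G$.

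The main obstacle will be excluding the elementary abelian $Q$ for odd $p$, where we may need explicit computation of modules. I would first try the $6$-transposition bound combined with Brauer character restrictions to $N$.
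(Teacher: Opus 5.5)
\textbf{Gap 1: the Sylow step uses a false claim.} You have $|G|=1440$, so $|G|_2=32$, but $|N|=48$, so $|N|_2=16$. No Sylow $2$-subgroup $S$ of $G$ can normalise $E$. Hence ``$S\le N$'', ``$E$ is characteristic in $S$'' and ``$|N|_2=|S|$'' are all false. Concretely, put $T=S\cap M\cong D_8$. The two four-subgroups of $T$ lie in the two $A_6$-classes of four-groups, which are fused in $\Aut(A_6)$ but not in $S_6$. So elements of $S\setminus S_6$ interchange them.

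Your $p=2$ case, the simplicity of $Q$, $Q\cap G\neq 1$ and the Sylow structure of $Q$ all rest on $|\hat G|_2=32$, so as written they are unsupported. The repair is to work with $T$ instead of $E$:
\begin{itemize}
\item The involutions of $G\setminus M$ are not Miyamoto involutions of Monster-type axes. So $T$ is generated by the involutions of $S$ that are Miyamoto involutions, and therefore $N_{\hat G}(S)\le N_{\hat G}(T)$.
\item $N_{\hat G}(T)$ permutes the two four-subgroups of $T$. The stabiliser of $E$ is $N_{\hat N}(T)=N_N(T)\cong 2\times D_8$, of order $16$.
\item Hence $|N_{\hat G}(T)|\le 32=|S|$, so $N_{\hat G}(S)=S$ and $S$ is Sylow in $\hat G$.
\end{itemize}
Your $p=2$ argument is also wrong on its own terms, since $QE$ is not $M$-invariant. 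Instead use $1\neq C_Q(E)\le C_{\hat G}(E)\le N\le G$, which contradicts the fact that $G$ has no nontrivial soluble normal subgroup.

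\textbf{Gap 2: the odd-$p$ case is not closed.} Your bound $|C_Q(E)|\le 3$ is too weak: it lets the $4$-dimensional $\F_3A_6$-module through. The correct bound is $C_Q(E)\le Q\cap N\le Q\cap G=1$. This kills every nontrivial $\F_5A_6$-module and the $4$- and $9$-dimensional $\F_3A_6$-modules.

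It does not kill the $6$-dimensional $\F_3A_6$-module, which is the sum of two Galois-conjugate $3$-dimensional $\F_9$-modules. There each involution acts with eigenvalues $1,-1,-1$ on each $3$-space, so $C_R(E)=1$. Being a multiple of $3$, its dimension also passes the simple divisibility test.

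The missing idea is the paper's second use of $6$-transpositions. Suppose Miyamoto involutions $\tau_a,\tau_b$ with $|\tau_a\tau_b|=5$ both invert some $1\neq q\in Q$. Take $r\in\la q\ra$ with $r^2=q$; then $\tau_b^r=\tau_bq$ is again a Miyamoto involution. But $\tau_a\cdot\tau_bq=(\tau_a\tau_b)q$ has order $15$, a contradiction. In the $6$-dimensional module each involution inverts a $4$-dimensional subspace, so any two involutions do invert a common nonzero vector, and the module is excluded.

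\textbf{Gap 3: the non-abelian case is incomplete.} Matching $|N_Q(E)|$ does not rule out $L_2(3^{4j+2})$ for $j\ge 1$. These groups contain $A_6$, have dihedral Sylow $2$-subgroups of order $8$, and have $N_Q(E)\cong S_4$. You need the $6$-transposition bound $(q+1)/2\le 6$ here, as in the paper. You also still owe an argument that $Q\cap S$ is dihedral of order $8$ before applying Gorenstein--Walter.
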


As it happens, the argument from  \cite[Theorem 13.17]{aut}, where we find the automorphism group of another axial algebra for $\Aut(A_6)$, works {\it mutatis mutandis}. Hence we only provide an outline here. 

Let $\hat G=\Aut(A)$. We need to show that $\hat G$ coincides with its known subgroup $G\cong\Aut(A_6)$. By the above calculation, we know that $\hat N=N_{\hat G}(E)=N_G(E)=N\cong 2\times S_4$. The first step is to show that $G$ and $\hat G$ share a Sylow $2$-subgroup. This is a standard argument in finite group theory. Let $\hat S$ be a Sylow $2$-subgroup of $\hat G$ containing a Sylow $2$-subgroup $S$ of $G$, with $E\leq S$. If $S<\hat S$ then also $S<N_{\hat S}(S)$ and this leads eventually to additional elements normalising $E$, a contradiction, since $\hat N=N$.

After this we consider a minimal normal subgroup $Q$ of $\hat G$. We first eliminate the possibility that $Q$ is an elementary abelian $p$-group. The case $p=2$ is impossible because $S$ would then have a nontrivial centraliser in $Q$, but $S$ is self-centralised and it intersects $Q$ trivially in this case. The odd $p$ are eliminated via the $6$-transposition property of Miyamoto involutions. Clearly, this immediately forces $p=3$ or $5$. For these two primes, we consider an irreducible $M$-submodule of of $Q$ and dispose of all possibilities using the available modular character tables of $M\cong A_6$. We eliminate all nontrivial modules (the trivial module is not possible due to $S$ being self-centralised) using the following handle: two Miyamoto involutions from $M$, whose product has order $\{3,5\}\setminus\{p\}$,  cannot invert the same nontrivial element of $Q$, as this would lead to a product of order $15$. This in turn implies a weaker but simpler condition that the dimension of a possible irreducible module must be a multiple of $3$. For $p=5$, this eliminates everything, and for $p=3$, the combination of the simple condition and the more elaborate one also finishes the job.

At this point, we know that $Q$ is a direct product of isomorphic non-abelian simple groups. Since $G$ contains  the Sylow $2$-subgroup $S$ of $\hat G$, we deduce that $M\leq Q$, and also, since $S$ is small, we deduce that $Q$ is itself a simple group. Finally, we apply the classification \cite{GW} of finite simple groups with a dihedral Sylow $2$-subgroup to show that $Q$ can only be $L_2(q)$, with $q\equiv 7,9\mbox{ mod }16$, or $A_7$, and after again using the $6$-transposition condition and other available information, we eventually deduce $Q=M$ and $\hat G=G$.

\section{The $101$-dimensional algebra for $L_2(11)$ of shape $\bullet$}\label{L211 algebra}

This algebra $A=A_{101}$ was constructed from the class of $55$ involutions in $G:=PGL(2,11)$ with the corresponding Miyamoto group $G_0:=L_2(11)$.
Let $S\cong D_8$, the dihedral group of order $8$, be a Sylow $2$-subgroup of $G$. Then $S$ has three classes of involutions,
one central, and two of size two each. The central involution, $\tau_1$, say, is induced by an axis $a_1$, and one of
the two classes is also induced by axes. Let the class induced by axes be $\{\tau_2, \tau_3\}$, and let
$a_2$ and $a_3$ be the corresponding axes. Set $E=\la\tau_1,\tau_2,\tau_3\rangle \cong 2^2$. This is a Sylow $2$-subgroup of $G_0$. 
The corresponding subalgebra $V=\la a_1, a_2, a_3\ra$ is isomorphic to $2A$. 
 
We first check for the additional axes and uniqueness of the Frobenius form.
 
\begin{computation}\label{prelim_comp_L2_11}
\begin{enumerate}
\item The algebra $A$ has no Jordan axes;
\item the $55$ known axes do not have twins;
\item the involutions in $G\setminus G_0$ are not induced by axes;
\item the positive definite Frobenius form on $A$ is unique up to scale. 
\end{enumerate}
\end{computation}

Let $N_0=N_{G_0}(E)\cong A_4$ and $N=N_G(E)\cong S_4$. The group
$G_0=L_2(11)$ has two conjugacy classes of subgroups of order $60$ isomorphic to $A_5$. 
Let $H_1$ and $H_2$ be the (unique) members of these classes which contain $N_0$. Let $B_i$ be the subalgebra of $A$ generated by the axes corresponding to the $15$ involutions from $H_i$. 
Then $B_i$ is of dimension $26$. Looking at the shape diagram of $A_{101}$ (see Figure \ref{L211 shape}), the shape of $B_i$ can only be $\bullet 3A2A$ and so the dimension information agrees with \cite{Se, axialconstruction}.
 
\medskip
Let $Y=\{a_1, a_2, a_3\}$. We will now decompose $A$ with respect to $Y$. 

\begin{computation}
The joint eigenspace decomposition of $A$ with respect to $Y$ is as follows:
\begin{enumerate}
\item $U=A_{\left(0,0,0\right)}(Y)$ of dimension $18$;
\item $A_{\left(\frac{1}{4},\frac{1}{32},\frac{1}{32}\right)}(Y),A_{\left(\frac{1}{32},\frac{1}{4},\frac{1}{32}\right)}(Y)$ and
$A_{\left(\frac{1}{32},\frac{1}{32},\frac{1}{4}\right)}(Y)$, each of dimension $5$; and
\item $A_{\left(0, \frac{1}{32}, \frac{1}{32}\right)}(Y), A_{\left(\frac{1}{32},0,\frac{1}{32}\right)}(Y)$
and $A_{\left(\frac{1}{32},\frac{1}{32},0 \right)}(Y)$, each of dimension $17$.
\end{enumerate}
\end{computation}

As usual, we start with determining $\Aut(U)$. First of all, note that $N=N_G(E)$ acts on $U$ and induces on it a factor group of the group $\bar N=N/E\cong S_3$, as $E$ acts trivially on $U$. It is easy to check computationally that the elements of order $3$ from $N$ act on $U$ nontrivially, which means that the whole group $\bar N\cong S_3$ is induced on $U$. We aim to show that $\Aut(U\cong\bar N$.

\medskip
Set $b_i:=\one_{B_i}-\one_{V}$, $i=1,2$. By the definitions of $B_i$, we have that $V\subseteq B_i$, and hence
$b_i\in U$ for $i=1,2$. Indeed, $b_ia_j=\left(\one_{B_i}-\one_{V}\right)a_j=a_j-a_j=0$ for $a_j \in Y$. To establish the invariance
of $\lla b_1, b_2\rra$ under the automorphisms normalising $E$, we have the following calculation in $U$.

\begin{computation} \label{only two}
The elements $b_1$ and $b_2$ are the only two idempotents of length $\frac{276}{35}$ in $U$.
\end{computation}

Let $W=\lla b_1, b_2\rra$. Then  $W$ has dimension $6$ and is invariant under automorphisms of $A$ that normalise $E$. We note that involutions from $S\setminus E$ permute $H_1$ and $H_2$ and consequently they also permute $b_1$ and $b_2$.

\begin{computation} \label{for x}
\begin{enumerate}
\item The eigenspace $A_1(b_i)=U_1(b_i)$ is $4$-dimensional and $W^\prime:=\cap _iU_1(b_i)$ is $1$-dimensional;
\item the unique nonzero  idempotent $x$ in $W^\prime$ is of length $\frac{20}{7}$;
\item $\mathrm{ad}_x$ has spectrum $1$, $0$, $\frac{1}{14}$, $\frac{2}{7}$ and $\frac{3}{7}$ with the corresponding eigenspaces having dimensions $1$, $5$, $6$, $2$ and $4$, respectively.
\item $x$ satisfies a Seress, ungraded fusion law in $U$; and
\item $U_1(x)+U_0(x)=W$.
\end{enumerate}
\end{computation}

We use the decomposition with respect to $x$ to find $\Aut(U)$. Let $T=U_{\frac{1}{14}}(x)$.

\begin{computation}\label{T_generatesU}
$U=\lla T\rra$.
\end{computation}

In light of Computations \ref{only two}, \ref{for x} and  \ref{T_generatesU}, all the automorphisms of $U$ are completely determined  by their action on $T$. Since $\Aut(U)$ leaves the generating set $\{b_1,b_2\}$ of $W$ invariant, and furthermore, every element of $S\setminus E$ switches $b_1$ and $b_2$, we conclude that $\Aut(U)$ induces on $W$ a group of order $2$. It remains to find the kernel of the action of $\Aut(U)$ on $W$. Hence we focus on extensions of the identity automorphism of $W$ to $T$.

Note that, as $x$ satisfies a Seress fusion law, $T$ is a module for $W$.

\begin{computation}
\begin{enumerate}
\item The module $T$ admits a $2$-dimensional space $H$ of extensions of the identity automorphism of $W$; 
\item a non-identity $\phi\in H$ has an irreducible quadratic minimal polynomial $f$ over $\Q$.
\end{enumerate}
\end{computation}

Let $\tilde\Q$ be the quadratic extension of $\Q$ corresponding to this minimal polynomial. Let $\tilde A$ be the algebra $A$ viewed over $\tilde\Q$ and we similarly adapt the tilde notation for all subalgebras and subspaces of $A$.

\begin{computation}\label{extensions_Ti}
\begin{enumerate}
\item Acting on $\tilde T$, the nontrivial $\phi\in H$ has two conjugate eigenvalues $\xi_1$ and $\xi_2$, 
and the corresponding eigenspaces $T_i$, $i=1,2$, of $\phi$ are each of dimension $3$; 
\item $T_1$ and $T_2$ are modules for $\tilde W$; and
\item each $T_i$ has a $1$-dimensional space of extensions of the identity automorphism on $\tilde W$.
\end{enumerate}
\end{computation}

We note that while $T_1$ and $T_2$ have been defined in terms of one specific $\phi\in H$, these subspaces of $\tilde T$ are invariant under the whole $H$. Hence we can again now talk about an arbitrary $\phi\in H$.  

A consequence of Computation \ref{extensions_Ti}(c) is that any $\phi\in H$ acts as a scalar
$\lm_i$ on $T_i$, $i=1,2$. 

\begin{computation} \label{T}
\begin{enumerate}
\item For some $t_i\in T_i$ and $w\in \tilde W$, we have $(t_i^3,w)\neq 0$; and
\item $(t_1t_2,w)\neq 0$.
\end{enumerate}
\end{computation}

We can now prove the following lemma.

\begin{lemma}\label{lm_i}
The kernel of $\Aut(U)$ acting on $W$ is of order $3$.
\end{lemma}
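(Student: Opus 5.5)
Let $\phi\in\Aut(U)$ act as the identity on $W$. Since $W=U_1(x)+U_0(x)$ and $x$ is determined by $W$ (it is the unique nonzero idempotent in $\cap_i U_1(b_i)$), $\phi$ fixes $x$ and so preserves $T=U_{\frac{1}{14}}(x)$. It restricts to an extension of $\mathrm{id}_W$ on the $W$-module $T$, so $\phi|_T\in H$. By Computation \ref{T_generatesU}, $\phi$ is determined by $\phi|_T$. Hence the kernel embeds in $H$, and the plan is to show that only three elements of $H$ survive.

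We extend scalars to $\tilde\Q$ and use Computation \ref{extensions_Ti}: $\phi$ acts on $T_i$ as a scalar $\lm_i$, $i=1,2$. As in Lemmas \ref{kernel U76} and \ref{kernel N76}, we constrain the $\lm_i$ using invariance of the Frobenius form. The form on $\tilde U$ is the base change of the form from $A$, and automorphisms of $U$ preserve it: the form is unique up to scale by Computation \ref{prelim_comp_L2_11}(d) (or a direct uniqueness check on $U$), and the scale is pinned by the fixed idempotent $x$. We also use that $\phi$ fixes $W$ pointwise.

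Computation \ref{T}(a) gives $(t_i^3,w)\ne 0$, where $t_i^3=t_i(t_it_i)$. Applying $\phi$ and using $w^\phi=w$ yields $\lm_i^3(t_i^3,w)=(t_i^3,w)$, so $\lm_i^3=1$. Computation \ref{T}(b) similarly gives $\lm_1\lm_2=1$. Thus $\lm_2=\lm_1^{-1}$ with $\lm_1^3=1$, leaving at most three possibilities, $\lm_1\in\{1,\omega,\omega^2\}$ with $\omega$ a primitive cube root of unity. In particular the kernel has order at most $3$. For consistency, $\tilde\Q$ should be $\Q(\omega)$, i.e. $f=t^2+t+1$ and $\xi_{1,2}=\omega^{\pm1}$, which matches $T_1,T_2$ being Galois conjugate. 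A rational $\phi\in H$ must act with conjugate scalars on $T_1,T_2$, and this is compatible with $\lm_2=\lm_1^{-1}=\bar\lm_1$.

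For the lower bound we use the known group. $N$ induces $\bar N\cong S_3$ on $U$. Its elements of order $3$ act nontrivially on $U$ and lie in $N_0\cong A_4$, which normalises each $H_i$, so they fix $b_1,b_2$ and hence act trivially on $W=\lla b_1,b_2\rra$. They therefore give a kernel element of order $3$, and the kernel has order exactly $3$.

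The step most likely to be delicate is justifying that $\phi$ preserves the form on $\tilde U$ with scale exactly $1$. We would handle it by the length of $x$ (or of $b_i$), which $\phi$ fixes. A second point to check is that the nonzero pairings in Computation \ref{T} are stated over $\tilde\Q$ with $t_i\in T_i$, so that the scalar computation goes through verbatim.
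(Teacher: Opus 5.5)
Your proposal is correct and follows the paper's proof essentially step for step. The lower bound comes from the order-$3$ elements of $\bar N$; you justify that they act trivially on $W$ via $N_0\le H_i$, which is more than the paper spells out. The upper bound comes from $\lm_i^3=1$ and $\lm_1\lm_2=1$ via Computation \ref{T}.

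One small remark on form preservation. Uniqueness of the form on $A$ (Computation \ref{prelim_comp_L2_11}(d)) does not by itself make an arbitrary automorphism of $U$ preserve the form. What is actually needed is your alternative: a uniqueness check on $U$, with the scale fixed by $x$. The paper leaves this point implicit as well.
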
 

\begin{proof}
Since $\bar N\leq\Aut(U)$, we know that the kernel has order at least $3$. Hence we need to show that it has order at most $3$. Let $\psi$ be an element of the kernel, and let $\phi:=\psi|_T\in H$.  In view of Computation \ref{T}(a),
$$
0\neq (t_i^3,w)=((t_i^3)^\psi, w^\psi)=((t_i^\phi)^3,w)=((\lm_i t_i)^3,w)
=\lm_i^3(t_i^3,w).
$$
It follows that $\lm_i^3=1$. Also, by part (b), 
$$
0\neq(t_1t_2,w)=((t_1t_2)^\psi,w^\psi)=(t_1^\phi t_2^\phi,w)=(\lm_1 t_1\lm_2 t_2,w)=\lm_1\lm_2(t_1 t_2,w),
$$
and hence $\lm_1\lm_2=1$, that is, $\lm_2=\lm_1^{-1}$. It follows we have only three possible $\phi:=\psi|_T$, and so no more than three elements $\psi$ in the kernel of $\Aut(U)$ acting on $W$. 
\end{proof}

Hence we now have the desired claim.

\begin{corollary} \label{full U}
The automorphism group $Aut(U)\cong\bar N\cong S_3$.
\end{corollary}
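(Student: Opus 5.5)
The plan is to bound $|\Aut(U)|$ from above by $6$ via the restriction map to $W$, and then match this bound with the known subgroup $\bar N$.

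\emph{Step 1: $\Aut(U)$ acts on $W$.} By Computation~\ref{only two}, every automorphism of $U$ permutes the set $\{b_1,b_2\}$. Since the automorphisms of $U$ preserve lengths, we first need them to preserve the Frobenius form. The positive definite form on $A$ restricts to $U$, so if the form on $U$ is unique up to scale (a quick check, as in the $A_{76}$ case), then the identity of $U$ forces the scale to be $1$. Once $\{b_1,b_2\}$ is invariant, $\Aut(U)$ acts on $W=\lla b_1,b_2\rra$, and this action is determined by the induced permutation of $\{b_1,b_2\}$. The image of $\Aut(U)$ in $\Aut(W)$ therefore has order at most $2$. It has order exactly $2$, because involutions in $S\setminus E$ swap $b_1$ and $b_2$.

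\emph{Step 2: the kernel.} By Lemma~\ref{lm_i}, the kernel of this action has order $3$. Consequently $|\Aut(U)|\le 2\cdot 3=6$.

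\emph{Step 3: the lower bound.} We already noted that $N$ induces the full group $\bar N\cong S_3$ on $U$: $E$ acts trivially, and the elements of order $3$ act nontrivially. The elements of $S\setminus E$ also act nontrivially, since they swap $b_1$ and $b_2$. Hence $\bar N$ embeds in $\Aut(U)$. Combining this with the bound of order $6$ gives $\Aut(U)=\bar N\cong S_3$.

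The only delicate point is the justification in Step~1 that automorphisms of $U$ preserve lengths, i.e. that they preserve the Frobenius form. I would handle it exactly as in the earlier sections: first verify (computationally) that the Frobenius form on $U$ is unique up to scale, and then use the invariance of $\one_U$ to pin down the scale.
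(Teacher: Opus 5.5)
Your proposal is correct and is essentially the paper's argument: Computation~\ref{only two} makes $\Aut(U)$ permute $\{b_1,b_2\}$ and so induce a group of order $2$ on $W=\lla b_1,b_2\rra$, Lemma~\ref{lm_i} gives a kernel of order $3$, and the faithfully induced $\bar N\cong S_3$ gives equality. The point you add, that automorphisms of $U$ must preserve the Frobenius form on $U$ for the length argument to apply, is used implicitly in the paper. Your fix closes it: uniqueness of the form on $U$ up to scale, with the scale pinned by an invariant vector of nonzero length such as $\one_U=\one_A-\one_V$ when $A$ is unital.
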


Clearly, by Lemma \ref{lm_i}, the order of $\Aut(U)$ is at most $6$, and so $\Aut(U)$ indeed coincides with the group $\bar N\cong S_3$ induced by $N=N_G(E)$.

\medskip 
We now seek to extend the automorphisms of $U$ to the entire algebra $A$. Due to the following computational facts, we concentrate on the $17$-dimensional summands
$W_1:=A_{\left(0,\frac{1}{32},\frac{1}{32}\right)}(Y)$, $W_2:=A_{\left(\frac{1}{32},0,\frac{1}{32}\right)}(Y)$, and $W_3:=A_{\left(\frac{1}{32},\frac{1}{32},0\right)}(Y)$. As usual, these summands are modules for $U$.

\begin{computation}\label{extensions_U_to_A_L2_11}
\begin{enumerate}
\item The summands $W_i$ generate $A$, i.e., $A=\lla W_1, W_2, W_3\rra$; in fact, any pair of them generates $A$;
\item each $W_i$, $i=1,2,3$, admits a $1$-dimensional space of extensions of the identity automorphism of $U$;
\item for some $w_i\in W_i$ and $u\in U$, we have 
\begin{enumerate}
\item $(w_i^2,u)\neq 0$ and;
\item $(w_1w_2, w_3)\neq 0$.
\end{enumerate}
\end{enumerate}
\end{computation}

We are now able to identify the kernel $K$ of $\hat N:=N_{\Aut(A)}(E)$ acting on $U$.

\begin{lemma} \label{U kernel}
We have that $K=E$.
\end{lemma}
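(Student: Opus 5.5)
The argument will follow the proof of Lemma \ref{kernel N76} for $A_{76}$ almost verbatim, with Computation \ref{extensions_U_to_A_L2_11} taking the place of Computation \ref{extensions A6}. Let $\phi\in K$, so $\phi$ normalises $E$ and acts as the identity on $U$.

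The first step is to show that $\phi$ preserves each $W_i$ individually. A priori $\phi$ preserves the joint eigenspace decomposition with respect to $Y$ only up to a permutation of summands: it permutes $a_1,a_2,a_3$ because it normalises $E$ and $\tau_{a_i}$ determines $a_i$. I would rule out a nontrivial permutation as follows. The elements of $N$ of order $3$ act nontrivially on $U$, and by Corollary \ref{full U} the group $\Aut(U)\cong\bar N\cong S_3$ acts faithfully on $U$ as $N/E$, which permutes $\{a_1,a_2,a_3\}$ faithfully. Hence I expect the permutation induced by $\phi$ on $Y$ to be read off from the action of $\phi$ on $U$. Concretely, if $\phi$ induced a nontrivial permutation of $Y$, compose $\phi$ with an element of $N$ inducing the inverse permutation. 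The composite fixes each $a_i$ and acts on $U$ as a nonidentity element of $\bar N$. The aim is to derive a contradiction from this, most directly by showing that no automorphism of $A$ fixing $a_1,a_2,a_3$ can induce that element on $U$, so that $\phi$ must fix every $a_i$.

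An alternative for this step is to mimic Computation \ref{matching vi}: find idempotents of $U$ whose spectra on $W_1,W_2,W_3$ distinguish the three modules. Plausible candidates are $x$ and the elements $b_i$, or other distinguished elements of $U$. Since $\phi$ fixes $U$ pointwise, it would then preserve each $W_i$. This matching is the step I expect to be the main obstacle, since it is not directly recorded in the stated computations and may need an additional check. Once $\phi$ fixes each $a_i$, each $W_i=A_{(\lambda_1,\lambda_2,\lambda_3)}(Y)$ is clearly $\phi$-invariant.

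Next, by Computation \ref{extensions_U_to_A_L2_11}(b), $\phi$ acts on each $W_i$ as a scalar $\nu_i$. Using Computation \ref{extensions_U_to_A_L2_11}(c)(i) and the invariance of the Frobenius form from Computation \ref{prelim_comp_L2_11}(d), I compute
\[
0\ne(w_i^2,u)=((w_i^\phi)^2,u^\phi)=\nu_i^2(w_i^2,u),
\]
so $\nu_i=\pm1$. From (c)(ii), the same manipulation gives $\nu_1\nu_2\nu_3=1$. By part (a), $A$ is generated by the $W_i$, so $\phi$ is determined by the triple $(\nu_1,\nu_2,\nu_3)$. This leaves at most four possibilities, namely $(1,1,1)$, $(1,-1,-1)$, $(-1,1,-1)$ and $(-1,-1,1)$, so $|K|\le 4$.

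Finally, the elements of $E$ realise all four triples. The identity gives $(1,1,1)$. The involution $\tau_{a_1}$ fixes $U$ and $W_1$, and negates $W_2$ and $W_3$ because these lie in the $\frac{1}{32}$-eigenspace of $a_1$; the same reasoning applies to $\tau_{a_2}$ and $\tau_{a_3}$. Since $E\le K$ and $|K|\le 4=|E|$, we conclude $K=E$.
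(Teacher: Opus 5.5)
Your main argument is exactly the paper's proof: $\phi$ acts on each $W_i$ as a scalar $\nu_i$, the Frobenius-form identities give $\nu_i=\pm1$ and $\nu_1\nu_2\nu_3=1$, and the four surviving triples are realised by $E$. The paper, however, goes straight from $\phi\in K$ to ``$\phi$ acts on each $W_i$ as a scalar''. So it tacitly assumes that $\phi$ fixes each $a_i$; unlike the $A_{76}$ case, no analogue of Computation~\ref{matching vi} is recorded for $A_{101}$.

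You are right that this step needs justification, but neither of your routes supplies it as stated. Composing with an element of $N$ only trades a $\phi\in K$ that moves $Y$ for an automorphism that fixes every $a_i$ and induces a nontrivial element of $\bar N$ on $U$. Nothing group-theoretic excludes such an automorphism. Abstractly, the image of $\hat N$ in $\Aut(U)\times\mathrm{Sym}(Y)\cong S_3\times S_3$ could have order $18$ or $36$ rather than $6$. Your sentence that the permutation of $Y$ ``can be read off'' from the action on $U$ is precisely the claim that has to be proved.

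Your proposed matching candidates $x$, $b_1$, $b_2$ also cannot work. Each of them is fixed by the elements of order $3$ in $N_0\le H_1\cap H_2$. These elements permute $W_1,W_2,W_3$ cyclically, and an element $n$ fixing $b$ satisfies $n\,\ad_b=\ad_b\, n$. Hence $\ad_b$ has the same spectrum on all three $W_i$.

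What closes the gap is one more computation of the same kind as Computation~\ref{extensions_U_to_A_L2_11}(b). Since $\phi\in K$ fixes $U$ pointwise, we have $\phi(uw)=u\,\phi(w)$ for $u\in U$. So if $\phi(W_i)=W_j$, then $\phi|_{W_i}$ is a nonzero $U$-module isomorphism $W_i\to W_j$. Verifying $\mathrm{Hom}_U(W_i,W_j)=0$ for $i\neq j$ therefore forces $\phi$ to preserve each $W_i$, equivalently to fix each $a_i$.

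Alternatively, you can match the $W_i$ by spectra as in Computation~\ref{matching vi}. For this you must use idempotents of $U$ whose stabiliser in $\bar N$ has order $2$, for instance ones fixed by $S$ (the stabiliser of $a_1$), not $N$-invariant ones. With such a check added, both your argument and the paper's are complete.
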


\begin{proof}
By Computation \ref{extensions_U_to_A_L2_11} (a), any automorphism of $A$ stabilising $Y$ is completely determined by its action on the components $W_i$. Let $\phi\in K$. By part (b), $\phi$ acts on each $W_i$ as a scalar $\mu_i$.
By (c)(i), $\mu_i^2=1$, since $0\neq (w_i^2,u)=((w_i^2)^\phi, u^\phi)=((w_i^\phi)^2,u)=((\mu_iw_i)^2,u)=\mu_i^2(w_i^2,u)$. Hence $\mu_i=\pm 1$.
Similarly, (c)(ii) yields $\mu_1\mu_2\mu_3=1$. This gives us the following possibilities for triples $(\mu_1,\mu_2, \mu_3)$: 
$$\{(1,1,1), (1,-1,-1), (-1,1-1), (-1,-1,1)\}.$$ Clearly, the first of these is realised by the identity automorphism, while the remaining three are realised by the
Miyamoto involutions $\tau_1$, $\tau_2$ and $\tau_3$ of $a_1$, $a_2$, and $a_3$, respectively. The result follows.
\end{proof}

Consequently, we know the full normaliser of $E$ in $\Aut(A)$.

\begin{corollary}
We have that $\hat N=N_{\Aut(A)}(E)=N$.
\end{corollary}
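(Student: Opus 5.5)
The plan is to obtain $\hat N=N$ by combining Lemma \ref{U kernel} with Corollary \ref{full U}, in the same way as for $A_{76}$.

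First, $\hat N=N_{\Aut(A)}(E)$ preserves the set $Y=\{a_1,a_2,a_3\}$. Indeed, any automorphism normalising $E$ permutes the involutions $\tau_1,\tau_2,\tau_3$. By Computation \ref{prelim_comp_L2_11}(b) the known axes have no twins, so each $\tau_i$ is induced by exactly one axis of $X$. I would like to say more strongly that $a_i$ is the unique axis of $A$ whose Miyamoto involution is $\tau_i$. Any other such axis would be a twin, and the computations already rule out twins of the $55$ axes. So an element of $\hat N$ carrying $\tau_i$ to $\tau_j$ must carry $a_i$ to $a_j$. As a result, $\hat N$ permutes $Y$ and preserves the joint eigenspace decomposition with respect to $Y$. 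In particular, $\hat N$ leaves $U=A_{(0,0,0)}(Y)$ invariant and restricts to automorphisms of $U$. This gives a homomorphism $\hat N\to\Aut(U)$.

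Next, by Lemma \ref{U kernel}, the kernel of this homomorphism is $K=E$. Hence $|\hat N|\le|E|\cdot|\Aut(U)|$. By Corollary \ref{full U}, $\Aut(U)\cong\bar N=N/E\cong S_3$, so $|\hat N|\le 4\cdot 6=24=|N|$. Since $N=N_G(E)\cong S_4$ acts on $A$ and normalises $E$, we have $N\le\hat N$, and therefore equality holds. Put differently, every element of $\hat N$ induces on $U$ an automorphism already induced by some $n\in N$. Multiplying by $n^{-1}$ then lands in $K=E\le N$.

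The only step needing care is the first: showing that $\hat N$ really stabilises $Y$, and hence $U$, rather than just normalising $E$ abstractly. Here I would rely on the absence of twins, and if necessary on the fact that the axes $a_i$ are determined as the unique axes in $A$ with Miyamoto involution $\tau_i$. The rest is bookkeeping with the already established lemmas.
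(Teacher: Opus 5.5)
Your proposal is correct and follows the paper's argument: the kernel lemma ($K=E$) and $\Aut(U)\cong\bar N\cong S_3$ together give $|\hat N|\le 4\cdot 6=24=|N|$, and $N\le\hat N$ then forces equality. You also use the absence of twins to show that $\hat N$ stabilises $Y$, and hence acts on $U$. The paper leaves this step implicit, and your justification of it is sound.
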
 

Indeed, Corollary \ref{full U} and Lemma \ref{U kernel} imply that $|\hat N|=24=|N|$, and so they must be equal.

\medskip
We now use group-theoretic arguments to prove that $\Aut(A)=PGL(2,11)$. By \cite[Corollary 3.4]{aut} , we have $\hat{G}:=\Aut(A)$ is finite. Our approach is similar to the previous cases we handled. Let $Q$ be a minimal normal subgroup of $\hat G$. To begin with, we show that $\hat{G}$ has trivial soluble radical. For a contradiction, assume  that $Q$ is elementary abelian $p$-group for some prime $p$.  Then $Q$ can be regarded as a vector space over $\mathbb{F}_p$,
as well as a $\hat{G}$-module. Recall that $G_0=L_2(11)$ is of index $2$ in $G$, and that $G_0\geq E$. Set $R$ to be a minimal nontrivial subgroup of $Q$ invariant under $G_0$. Then $R$ is an irreducible $G_0$-module. 

\begin{lemma}\label{cent_E_in_R}
We have $C_R(E)=1$.
\end{lemma}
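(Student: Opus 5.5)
We argue by contradiction. Suppose $C_R(E)\neq 1$ and consider the subgroup $C_Q(E)\ge C_R(E)$, which is nontrivial. Since $Q$ is normal in $\hat G$ and $E\le\hat G$, every element of $C_Q(E)$ normalises $E$ (indeed centralises it). Hence $C_Q(E)\le \hat N=N_{\hat G}(E)$. By the corollary preceding the statement, $\hat N=N\cong S_4$, so $C_Q(E)$ is a nontrivial elementary abelian $p$-subgroup of $N$. It centralises $E$, so it lies in $C_N(E)$.

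The key step is to identify $C_N(E)$. In $N=N_G(E)\cong S_4$, with $E$ the normal Klein four-subgroup, we have $C_N(E)=E$. This holds because $N/E\cong S_3$ acts faithfully on $E$: the elements of order $3$ act nontrivially, and the involutions of $S\setminus E$ act nontrivially as well. Therefore $C_Q(E)\le E$, which forces $p=2$ and $1\neq C_Q(E)\le E\cap Q$.

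It remains to exclude $E\cap Q\ne 1$. The group $Q$ is normal, and $G_0=L_2(11)$ is generated by its single class of involutions, all of which are conjugate to elements of $E$. Since $E\cap Q$ is normalised by $N$, which acts transitively on $E^\#$, a nontrivial intersection gives $E\le Q$. Then every Miyamoto involution of $G_0$ lies in $Q$, so $G_0\le Q$. This contradicts $Q$ being abelian, because $L_2(11)$ is nonabelian simple. Hence $C_R(E)=1$.

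The main point to verify is the claim $C_N(E)=E$, which rests on the faithfulness of the $N/E$-action on $E$. This follows from the structure $N\cong S_4$ recorded above.
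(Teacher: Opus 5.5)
Your proof is correct and is essentially the paper's argument: $\hat N=N\cong S_4$ makes $E$ self-centralising in $\hat G$, so $C_R(E)\le E\cap R\le G_0\cap Q$, and a nontrivial $G_0\cap Q$ contradicts the simplicity of $G_0$. The paper finishes more directly by observing that $G_0\cap Q$ would be a nontrivial abelian normal subgroup of the non-abelian simple group $G_0$. This makes your transitivity-on-$E^\#$ and generation-by-involutions step unnecessary, though that step is also valid.
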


\begin{proof}
Since $\hat N=N_{\hat G}(E)=N\cong S_4$, we have that $E$ is self-centralised. If $C_R(E)$ were nontrivial, then $1\neq E\cap R\leq G_0\cap Q$, a contradiction, since $G_0\cap Q\unlhd G_0$
and $G_0$ is non-abelian simple.
\end{proof}

This immediately gives:

\begin{corollary}
We have $p\neq 2$. 
\end{corollary}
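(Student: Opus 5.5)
The argument is by contradiction, using only Lemma \ref{cent_E_in_R} and the assumption that $Q$ is an elementary abelian $p$-group. Suppose $p=2$. Then $Q$, and hence its subgroup $R$, is a nontrivial finite $2$-group.

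The idea is to let the $2$-group $E$ act on $R$ and apply the standard fixed-point count. Since $R\le Q\unlhd\hat G$ and $R$ is a $G_0$-invariant subgroup, $E\le G_0$ acts on $R$ by conjugation. The $E$-orbits on $R$ all have $2$-power length. Summing over orbits gives
$$|R|\equiv |C_R(E)|\pmod 2.$$
Because $|R|$ is a nontrivial power of $2$, the left side is even. The identity element is always fixed, so $|C_R(E)|\ge 1$, and the congruence then forces $|C_R(E)|\ge 2$. Thus $C_R(E)\neq 1$, contradicting Lemma \ref{cent_E_in_R}. Hence $p\neq 2$.

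There is no real obstacle here; the only point to check is that $R$ is a nontrivial $E$-invariant $2$-group, which follows from its choice as a minimal nontrivial $G_0$-invariant subgroup of $Q$ together with $E\le G_0$. Equivalently, one can note that a $2$-group acting on a nontrivial $\mathbb{F}_2$-module always has nonzero fixed vectors, since the only irreducible $\mathbb{F}_2E$-module is the trivial one.
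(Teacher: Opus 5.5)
Your proof is correct and is the paper's argument: if $p=2$, both $R$ and $E$ are $2$-groups, so $C_R(E)\neq 1$, contradicting the lemma that $C_R(E)=1$. The only difference is that you spell out the orbit-counting congruence $|R|\equiv |C_R(E)| \pmod 2$, which the paper leaves implicit.
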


\begin{proof}
If $p=2$, then both $R$ and $E$ are $2$-groups, and so $C_R(E)\neq 1$, a contradiction.  
\end{proof}

Recall that Miyamoto involutions in axial algebras of Monster type $\left(\frac{1}{4},\frac{1}{32}\right)$ form a class of $6$-transpositions. This allows us to restrict $p$ even more.

\begin{lemma}
The prime $p$ lies in $\{3,5\}$.
\end{lemma}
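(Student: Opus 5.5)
We need to show that $p\in\{3,5\}$, given that $p$ is odd, $Q$ is an elementary abelian $p$-group, $R\le Q$ is an irreducible nontrivial $G_0$-module and $C_R(E)=1$.

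The plan is to use the $6$-transposition property of the Miyamoto involutions. Take any axis $a\in X$ with $\tau:=\tau_a\in G_0$. First we show that $\tau$ inverts some nontrivial element of $Q$. Since $p$ is odd, $R$ decomposes under $\la\tau\ra$ as $C_R(\tau)\times[R,\tau]$, with $\tau$ inverting every element of $[R,\tau]$. Suppose $[R,\tau]=1$. Then $\tau$ centralises $R$, and so does the normal closure of $\tau$ in $G_0$. As $G_0=L_2(11)$ is simple, that closure is all of $G_0$, so $G_0$ acts trivially on $R$. Then $E$ centralises $R$, contradicting $C_R(E)=1$ from Lemma \ref{cent_E_in_R}. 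Hence there is $1\neq q\in R$ with $q^\tau=q^{-1}$.

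Next we produce a second Miyamoto involution whose product with $\tau$ has order $p$. Consider $\tau':=\tau^q=q^{-1}\tau q$. This is again a Miyamoto involution, namely $\tau'=\tau_{a^q}$, where $a^q$ is an axis of $A$ because $q\in\Aut(A)$. Computing, $\tau\tau'=\tau q^{-1}\tau q=(q^{-1})^{\tau}q=q\cdot q=q^2$, which has order $p$ since $p$ is odd. The pair $a$, $a^q$ consists of axes of Monster type $\left(\frac14,\frac1{32}\right)$ in $A$. By the $6$-transposition property (Corollary 2.10 in \cite{kms}, equivalently the classification of $2$-generated subalgebras as Norton--Sakuma algebras, Table \ref{Norton-Sakuma}), the product $\tau_a\tau_{a^q}$ has order at most $6$. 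So $p\le 6$, and with $p$ odd this gives $p\in\{3,5\}$.

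The only delicate point is justifying that the order of $\tau_a\tau_b$ is bounded by $6$ for an arbitrary pair of axes, not just for pairs in the original axet. This holds for any two axes of Monster type: $\lla a,b\rra$ is a Norton--Sakuma algebra, and $\tau_a\tau_b$ acts on $A$ with order dividing the size bound coming from that subalgebra. We invoke this in the form stated in \cite{kms}. A minor point is that $\tau\tau'$ might act trivially on $A$. This cannot happen, because $q^2\neq 1$ is a nontrivial automorphism.
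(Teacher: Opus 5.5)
Your proof is correct and follows essentially the paper's route: you combine the $6$-transposition property of Miyamoto involutions (via $\tau\tau^{q}=q^{2}$, which has order $p$) with $C_R(E)=1$. The only difference is cosmetic: the paper argues by contradiction that for $p\ge 7$ every $e\in E$ would have to centralise $R$, whereas you use simplicity of $G_0$ to produce directly one Miyamoto involution inverting a nontrivial element of $R$, which is exactly the argument the paper later uses in the $M_{11}$ section.
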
 

\begin{proof}
By contradiction, suppose that $p>7$. Let $1\neq e\in E$. By the choice of $E$, $e=\tau_i=\tau_{a_i}$ for some $i\in \{1,2,3\}$. It follows that $e$ belongs to a $6$-transposition class. In particular, it cannot invert an element of $R$ since $p\geq 7$.  Hence $e$ acts trivially on $R$ and this holds for all $e\in E$, a contradiction with Lemma \ref{cent_E_in_R}.
\end{proof}

We examine the cases $p=3$ and $p=5$ separately, and obtain a contradiction in each case using the known Brauer character tables of $L_2(11)$, for example found in GAP \cite{GAP}. The following observation applies in both cases.

\begin{lemma}\label{dim_divisible_3}
The dimension $n$ of $R$ is a multiple of $3$. That is, $n=3k$ for some $k\in \mathbb{N}$. In addition, the eigenvalues of each $1\neq e\in E$ are $1$, with multiplicity $k$, and $-1$, with multiplicity $2k$.
\end{lemma}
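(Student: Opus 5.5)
We decompose $R$ under the action of $E\cong 2^2$. Since $p$ is odd, $p\nmid |E|$, so $R$ is the direct sum of the four weight spaces $R_\chi$ for the linear characters $\chi$ of $E$ over $\F_p$:
$$R=R_{\chi_0}\oplus R_{\chi_1}\oplus R_{\chi_2}\oplus R_{\chi_3}.$$
Here $\chi_0$ is the trivial character, and $\chi_i$ ($i=1,2,3$) is the character whose kernel is $\la\tau_i\ra$. By Lemma \ref{cent_E_in_R}, $R_{\chi_0}=C_R(E)=1$, so $R=R_{\chi_1}\oplus R_{\chi_2}\oplus R_{\chi_3}$.

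Next we show that the three remaining weight spaces have equal dimension. The normaliser $N_0=N_{G_0}(E)\cong A_4$ acts on $R$, since $R$ is a $G_0$-module. An element $g\in N_0$ maps $R_\chi$ onto $R_{\chi^g}$, and the elements of order $3$ in $N_0$ permute the involutions $\tau_1,\tau_2,\tau_3$ cyclically. Hence they permute $\chi_1,\chi_2,\chi_3$ cyclically as well, and therefore
$$\dim R_{\chi_1}=\dim R_{\chi_2}=\dim R_{\chi_3}=:k.$$
This gives $n=\dim R=3k$.

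Finally we read off the eigenvalues of each $\tau_i$. The involution $\tau_i$ acts as $1$ on $R_{\chi_i}$, because $\tau_i\in\ker\chi_i$. On the other two weight spaces $R_{\chi_j}$, $j\neq i$, it acts as $-1$: the character $\chi_j$ is nontrivial with kernel $\la\tau_j\ra\neq\la\tau_i\ra$, so $\chi_j(\tau_i)=-1$. Thus $\tau_i$ has eigenvalue $1$ with multiplicity $k$ and eigenvalue $-1$ with multiplicity $2k$, which is the second assertion. The only point needing care is that the weight spaces are genuinely permuted by $N_0$, that is, $(R_\chi)g=R_{\chi^g}$; this is a standard one-line check, and we do not expect any real obstacle there.
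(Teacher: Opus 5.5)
Your proof is correct and follows essentially the same route as the paper's sketch. Both arguments simultaneously diagonalise $E$ on $R$ (possible since $p$ is odd) and use Lemma~\ref{cent_E_in_R} to exclude the trivial weight, so each eigenvector is centralised by exactly one nontrivial element of $E$ and inverted by the other two; equal multiplicities then come from the conjugacy of $\tau_1,\tau_2,\tau_3$, which you realise explicitly through an element of order $3$ in $N_0\cong A_4$ permuting the weight spaces, while the paper simply notes that conjugate involutions centralise the same number of basis vectors.
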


This is essentially \cite[Lemma 10.12]{aut}, so we just sketch the argument. Since $E$ is abelian, $R$ admits a basis with respect to which all $e\in E$ act diagonally. By Lemma \ref{cent_E_in_R}, two elements of $E$ invert every element of this basis, while the last one centralises it. Now the claim follows since the three nontrivial elements of $E$ are conjugate and hence all centralise the same number $k$ of basis elements. 

\begin{lemma}
We have $p\neq 3$.
\end{lemma}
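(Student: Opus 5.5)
The plan is to compare the Brauer character of $R$ with the shape forced by Lemma \ref{dim_divisible_3}. Suppose $p=3$. Then $R$ is an irreducible $\F_3 G_0$-module of dimension $n=3k$. By Lemma \ref{dim_divisible_3}, every $1\neq e\in E$ acts on $R$ with eigenvalue $1$ of multiplicity $k$ and eigenvalue $-1$ of multiplicity $2k$. Since $e$ has order $2$, coprime to $3$, its trace lifts to a Brauer character value. So the Brauer character $\psi$ of $R$ satisfies
$$\psi(e)=k-2k=-k=-\tfrac{n}{3}, \qquad\text{that is,}\qquad \psi(1)=-3\psi(e).$$
All involutions of $L_2(11)$ are conjugate, so this value is the same on every involution. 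The task is therefore to show that no $\F_3 G_0$-irreducible has Brauer character satisfying this identity.

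First, I would pass to $\bar\F_3$. The module $R\otimes\bar\F_3$ is a direct sum of Galois conjugates $\chi_1,\dots,\chi_m$ of a single absolutely irreducible $3$-modular Brauer character $\chi$, each with multiplicity $1$ (finite fields have trivial Schur indices). Galois conjugation does not change the degree $d$ or the value $v=\chi(e)$ on involutions, since $v$ is rational. Hence $\psi(1)=md$ and $\psi(e)=mv$, and the condition reduces to $d=-3v$ for a single absolutely irreducible Brauer character $\chi$ of $L_2(11)$ in characteristic $3$.

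Next, I would read off the $3$-modular character table of $L_2(11)$ (from GAP \cite{GAP}, or from the ordinary table, since $3$-regular classes are the $3'$-elements). The absolutely irreducible $3$-Brauer characters have degrees $1,5,5,10,10,12,12$. Their values on an involution are, respectively, $1$, $1$, $1$, $-2$, $-2$, $0$, $0$. The degree $12$ characters vanish on involutions because involutions lie in the non-split torus of order $6$, where the characters of degree $q+1$ vanish. None of these pairs satisfies $d=-3v$: the candidates would require $1=-3$, $5=-3$, $10=6$ or $12=0$. The trivial module is also excluded directly by Lemma \ref{cent_E_in_R}. This contradiction gives $p\neq 3$.

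The main obstacle is getting the $3$-modular table right. In particular, I need to confirm that the reductions of the ordinary characters of degrees $11$ and $12$ decompose as claimed, with the $11$ giving $1+10$, and check the values on the involution class. I would verify this in GAP. If some irreducible did happen to satisfy the trace condition, I would fall back on the finer criterion used in the $A_{76}$ case. That criterion says that two Miyamoto involutions whose product has order $5$ cannot both invert the same nontrivial vector of $R$, since this would produce an element of order $15$ contradicting the $6$-transposition property. I would check it against the eigenspace dimensions of two such involutions.
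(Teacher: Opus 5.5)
Your argument is correct and is essentially the paper's. Both compare the $3$-modular Brauer characters of $L_2(11)$ on the involution class with the $k$ versus $2k$ eigenvalue split from Lemma~\ref{dim_divisible_3}: the paper first uses $3\mid n$ to isolate the degree-$12$ characters and then notes they vanish on involutions, while you use the single condition $d=-3v$, applied after descending to absolutely irreducible constituents (a welcome bit of extra care about $\F_3$ versus $\bar{\F}_3$).

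One harmless slip: there are only six irreducible $3$-Brauer characters (one per $3$-regular class), of degrees $1,5,5,10,12,12$, because the ordinary degree-$10$ character with value $+2$ on involutions reduces modulo $3$ as $5+5'$. Your list contains all the genuine ones with the correct values, so the conclusion is unaffected.
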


\begin{proof}
By Lemma \ref{dim_divisible_3}, the dimension of $R$ is a multiple of $3$. The irreducible $3$-modular characters of $G_0$ are of dimensions $1$, $10$, and $12$. It follows that the four modules of dimension $12$ are the candidates. However, the irreducible characters of this dimension assume the value $0$ on the class of involutions, which mean equal multiplicities of $1$ and $-1$. It follows that none of them is suitable, so the result follows.
\end{proof}

We next show that $p$ cannot be equal to $5$.

\begin{lemma}
We have that $p\neq 5$.
\end{lemma}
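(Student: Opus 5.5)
The plan is to eliminate $p=5$ directly, using the ``order $15$'' handle described in the outline for $A_{76}$, together with Lemma \ref{dim_divisible_3}. I will not need the $5$-modular character table in detail: a counting argument on inverted subspaces should suffice.

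\textbf{Step 1: a suitable pair of involutions.} Fix $1\neq t\in E$. Since $G_0=L_2(11)$ has a single class of involutions, all involutions of $G_0$ are Miyamoto involutions of axes in $X$. So I may pick an involution $s\in G_0$ with $ts$ of order $3$; for instance, take $s$ to be a conjugate of $t$ under an element of order $3$ in $N_0\cong A_4$, possibly after checking in $L_2(11)$ directly. Because $s$ is conjugate to $t$ in $G_0$, Lemma \ref{dim_divisible_3} applies to both. With $n=\dim R=3k$, each of the $(-1)$-eigenspaces $R_-(t)$ and $R_-(s)$ has dimension $2k$. Since $p=5$ is odd, these eigenspaces are well defined and $t$, $s$ act diagonalisably on $R$. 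Hence
$$\dim\bigl(R_-(t)\cap R_-(s)\bigr)\geq 2k+2k-3k=k>0,$$
and there is $1\neq v\in R$ inverted by both $t$ and $s$.

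\textbf{Step 2: producing an element of order $15$.} Both $t$ and $s$ invert $v$, so $ts$ centralises $v$. Consider $s':=sv\in\hat G$. Because $s$ inverts $v$, we have $(sv)^2=1$, and $sv$ is conjugate to $s$ by a suitable element of $\la v\ra$; this uses that $v$ has odd order $5$, so $v$ has a square root in $\la v\ra$. Hence $s'$ is again a Miyamoto involution, being the image of an axis under an automorphism. Then $ts'=(ts)v$, where $ts$ has order $3$ and commutes with $v$ of order $5$, so $ts'$ has order $15$. This contradicts the $6$-transposition property of Miyamoto involutions, and therefore $p\neq 5$.

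\textbf{Main obstacle.} The step needing most care is Step 1. I must confirm that an involution $s$ with $|ts|=3$ exists, which is standard in $L_2(11)$ since $A_4\leq G_0$. More subtly, I must ensure that Lemma \ref{dim_divisible_3} really gives $\dim R_-(s)=2k$ for $s\notin E$. This holds because $s$ is $G_0$-conjugate to $t$ and $R$ is a $G_0$-module.

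If this general argument were somehow insufficient, I would fall back on the $5$-modular Brauer characters of $L_2(11)$ from GAP \cite{GAP}. There I would list the irreducibles of dimension divisible by $3$ and check that none has involution trace equal to $-n/3$, mirroring the proof for $p=3$.
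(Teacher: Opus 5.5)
Your proof is correct, but it takes a different route from the paper. The paper combines $3\mid\dim R$ from Lemma \ref{dim_divisible_3} with the $5$-modular Brauer character degrees of $L_2(11)$, none of which is divisible by $3$. The paper lists these degrees as $1,10,11$; the complete list is $1,5,5,10,10,11$, because the two degree-$5$ characters have $5$-defect zero, but this changes nothing.

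You use only the eigenvalue part of that lemma. Every involution of $G_0$ inverts a subspace of $R$ of dimension $2k>\frac{1}{2}\dim R$, so any two of them share an inverted vector $v$. Conjugating $s$ by $v^3$, a square root of $v$, gives the Miyamoto involution $sv$, and $t\cdot sv$ has order $15$. This is exactly the ``order $15$'' handle the paper only sketches for $A_{76}$.

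The trade-off is as follows. The paper's proof is one line, but it depends on external modular character data. Yours is table-free. The same argument also rules out $p=3$, using a pair of involutions whose product has order $5$, available in $H_1\cong A_5$. So your argument would replace both Brauer-character lemmas at once.

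One justification in Step 1 is wrong as written. An element of order $3$ in $N_0=N_{G_0}(E)$ normalises $E$, so conjugating $t$ by it gives $s\in E$ with $|ts|=2$. Indeed, no two involutions of $A_4$ have product of order $3$. The existence of $s$ with $|ts|=3$ should come instead from $H_1\cong A_5\le G_0$, or from a dihedral subgroup of order $6$ in $G_0$; such pairs are the nodes labelled $3$ in Figure \ref{L211 shape}. Alternatively, your $A_4$ choice already works as it stands. The same dimension count gives a common inverted vector $v$ for $t$ and $s\in E$. Then $tsv$ has order $10>6$, which still contradicts the $6$-transposition property.
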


\begin{proof}
In characteristic $5$, the character table of $L_2(11)$ has irreducible characters of degrees $1$, $10$ and $11$. But by Lemma \ref{dim_divisible_3}, $3|\dim(R)$, so there is no suitable module.
\end{proof}

Since we have ruled out all possible values of the prime $p$, we have shown that our assumption that the minimal normal subgroup $Q$ was abelian cannot hold. 

\begin{corollary}
The minimal normal subgroup $Q$ of $\hat{G}$ is non-abelian.
\end{corollary}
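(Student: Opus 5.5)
The corollary closes the contradiction argument begun above. We assumed that the minimal normal subgroup $Q$ of $\hat G$ is an elementary abelian $p$-group and chose an irreducible $G_0$-submodule $R$ of $Q$. It therefore suffices to check that every prime $p$ has now been excluded. Since a minimal normal subgroup of a finite group is either elementary abelian or a direct product of isomorphic non-abelian simple groups, excluding all primes shows that $Q$ is non-abelian.

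The exclusion proceeds in the following order.

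First, Lemma \ref{cent_E_in_R} gives $C_R(E)=1$, using that $E$ is self-centralising because $\hat N=N\cong S_4$. Together with the fact that a $2$-group acting on a nontrivial $2$-group has nontrivial fixed points, this rules out $p=2$.

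Second, every $1\neq e\in E$ is a Miyamoto involution, and Miyamoto involutions form a class of $6$-transpositions. If $e$ inverted some $r\in R$, then $e$ and $e^r$ would both be in this class and their product $e\cdot e^r=r^{-2}$ would have order $p$. So for $p\geq 7$ the involution $e$ inverts no element of $R$, hence centralises $R$. This holds for every $e\in E$, contradicting $C_R(E)=1$. (I would also double-check that $p=7$ is indeed covered by this step.) Hence $p\in\{3,5\}$.

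Third, Lemma \ref{dim_divisible_3} shows that $3\mid\dim R$ and that each nontrivial $e\in E$ has eigenvalue $-1$ with multiplicity $2k$ and $1$ with multiplicity $k$. Thus the Brauer character of $R$ takes the value $-k\neq 0$ on involutions. For $p=5$, the irreducible degrees $1,10,11$ are not multiples of $3$, so this case is impossible. For $p=3$, only the degree-$12$ modules have dimension divisible by $3$, and their characters vanish on involutions, so this case is impossible as well.

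The only real check is that these lemmas together cover every prime, including the case $R$ trivial. That case is excluded by $C_R(E)=1$, which holds because $R\neq 1$. Once every prime is excluded, $Q$ cannot be abelian, which is the corollary.
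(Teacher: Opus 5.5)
Your proposal is correct and is essentially the paper's argument. In both, the corollary is just the endpoint of the same chain: $C_R(E)=1$ rules out $p=2$, the $6$-transposition property forces $p\in\{3,5\}$, and the condition $3\mid\dim R$ with character value $-k\neq 0$ on involutions eliminates $p=3$ and $p=5$.

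Your hesitation about $p=7$ can be dropped. A product of order $7>6$ already violates the $6$-transposition property, and the paper's ``$p>7$'' is a typo for $p\geq 7$, as its own proof then uses $p\geq 7$.
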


Consequently, $Q\cong L\times L\times \ldots \times L$ for a non-abelian simple group $L$. We aim to show that $Q=L$ is simple.

\begin{lemma}\label{S_is_Sylow_aut}
The Sylow $2$-subgroup $S\cong D_8\geq E$ of $N$ is a Sylow $2$-subgroup of $\hat{G}$.
\end{lemma}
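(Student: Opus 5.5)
We argue by contradiction with the standard ``normaliser grows'' argument. Let $\hat S$ be a Sylow $2$-subgroup of $\hat G$ containing $S$; it is finite since $\hat G$ is finite by \cite[Corollary 3.4]{aut}. If $S<\hat S$, then, since a proper subgroup of a finite $p$-group is properly contained in its normaliser, we get $S<N_{\hat S}(S)$. We will show that every element of $N_{\hat G}(S)$ normalises $E$. Then $N_{\hat S}(S)\le N_{\hat G}(E)=\hat N=N\cong S_4$. But $S$ is a Sylow $2$-subgroup of $N$, so the $2$-group $N_{\hat S}(S)\le N$ has order at most $8=|S|$. This contradicts $S<N_{\hat S}(S)$, and hence $\hat S=S$.

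The key step is therefore to show that $E$ is characteristic in $S\cong D_8$, or at least invariant under every automorphism of $S$ induced by conjugation in $\hat G$. Inside $D_8$ there are exactly two Klein four-subgroups, and the cyclic subgroup of order $4$ is characteristic. An outer automorphism of $D_8$ can swap the two Klein four-subgroups, so a purely abstract argument does not suffice. Instead we distinguish the two subgroups by the axial structure. The subgroup $E$ consists of the identity and three Miyamoto involutions $\tau_1,\tau_2,\tau_3$ of axes in the axet $X$. The other Klein four-subgroup $E'=\la\tau_1,s,s'\ra$ has $s,s'\in G\setminus G_0$, and these are not induced by axes by Computation \ref{prelim_comp_L2_11}(c).

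This is not quite enough, since an element of $\hat G$ could in principle map axes of $X$ to axes outside $X$. We handle this through the normal subgroup $Q$ rather than through $X$ directly. By the Corollary preceding this lemma, $Q$ is non-abelian, so $Q\cap G_0$ is normal in the simple group $G_0$ and is either $1$ or $G_0$. We aim to show that $G_0\le Q$. Then $G_0=\la E^{G_0}\ra$, and we can use the fact that $Q\cap S$ is a normal subgroup of $S$ invariant under $N_{\hat G}(S)$.

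If $Q\cap S=E$, then $E$ is invariant under $N_{\hat G}(S)$ and we are done. If $Q\cap S=S$, then $S$ is contained in $Q$. In that case we distinguish $E$ from $E'$ by $G_0$-fusion: $E$ lies in $G_0=O^{2}$-type data, while elements of $E'\setminus\la\tau_1\ra$ lie outside $G_0$. We would make this precise via the determinant or sign character of $G$ acting on $A$, or more simply by showing that the automorphism $S\to S$ induced by $N_{\hat G}(S)$ preserves $S\cap G_0=E$. That fact should follow because $G_0$ is generated by the Miyamoto involutions and $\hat G$ permutes the set of all Miyamoto involutions of axes of Monster type.

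The main obstacle is exactly this invariance of $E$. The cleanest route I would try first is to use that $\tau_1,\tau_2,\tau_3$ are Miyamoto involutions, so their $\hat G$-conjugates are Miyamoto involutions of axes of $A$. By Computation \ref{prelim_comp_L2_11}(c), the involutions $s,s'\in E'\setminus\la\tau_1\ra$ are not Miyamoto involutions of any axis, since the search in that computation was for arbitrary axes inducing them, not only axes in $X$. Hence no element of $\hat G$ maps $\tau_2$ into $E'\setminus\la\tau_1\ra$. This forces $E^g=E$ for every $g\in N_{\hat G}(S)$, and the argument above then completes the proof.
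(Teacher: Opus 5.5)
Your final paragraph is exactly the paper's argument. By Computation~\ref{prelim_comp_L2_11}(c) the involutions of $S\setminus E$ are not Miyamoto involutions of any axis, so any $t\in N_{\hat S}(S)$ fixes the central $\tau_1$ and cannot move $\tau_2,\tau_3$ outside $E$; hence $t\in\hat N=N$, and self-normality of $S$ in $N\cong S_4$ gives the contradiction. The middle detour through $Q$ should simply be deleted. Besides being unnecessary, it is circular, since the paper deduces $G_0\le Q$ (Corollary~\ref{Q_simple}) from this very lemma.
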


\begin{proof}
Suppose that $\hat{S}$ is a Sylow $2$-subgroup of $\hat{G}$ containing $S$. If $\hat{S}$ properly contains $S$, then also so does $N_{\hat{S}}(S)$. Select $t\in N_{\hat{S}}(S)$. By Computation \ref{prelim_comp_L2_11} (c), the involutions in $S\setminus E$ do not correspond to axes.
Consequently, the involutions in $E$ cannot be conjugate to the involutions in $S\setminus E$. We conclude that $t$ must normalise $E$, i.e., $t\in \hat N=N\cong S_4$, a contradiction, since $S$ is self-normalised in $N$. 
\end{proof} 

It is well known (and apparently attributed to Burnside) that a non-abelian finite simple group cannot have Sylow $2$-subgroups of order $2$.

\begin{corollary}\label{Q_simple}
The minimal normal subgroup $Q$ of $\hat{G}$ is simple and $G_0\leq Q$.
\end{corollary}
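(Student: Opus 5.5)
We argue from the fact that $Q\cong L^k$ for a non-abelian simple group $L$, together with Lemma \ref{S_is_Sylow_aut}, which says that $S\cong D_8$ is a Sylow $2$-subgroup of $\hat G$.

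First, $Q\cap S$ is a Sylow $2$-subgroup of $Q$, since $Q\unlhd\hat G$. The Sylow $2$-subgroup of $Q\cong L^k$ is $P^k$, where $P$ is a Sylow $2$-subgroup of $L$. Burnside's normal complement theorem (the remark just before the corollary) shows that a non-abelian simple $L$ cannot have a cyclic Sylow $2$-subgroup, and in particular not one of order $2$. Also $|P|\neq 1$ by Feit--Thompson, so $|P|\ge 4$. Hence $|P|^k\le 8$ forces $k=1$, and $Q=L$ is simple. The Sylow $2$-subgroup of $Q$ then has order $4$ or $8$ and is a non-cyclic subgroup of $D_8$, so it is $2^2$ or $D_8$.

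Next we show $G_0\le Q$. Consider $G_0\cap Q$, which is normal in $G_0$. Since $G_0\cong L_2(11)$ is simple, either $G_0\cap Q=1$ or $G_0\le Q$. Suppose $G_0\cap Q=1$. We know $Q\cap S$ is a non-cyclic subgroup of $S$ of order at least $4$. The Klein four-subgroups of $D_8$ are $E$ and the other four-group $E'$, and $D_8$ itself contains $E$. So if $Q\cap S$ is $E$ or $S$, then $E\le Q\cap G_0$, a contradiction. It remains to exclude $Q\cap S=E'$. Here $E'$ contains the central involution $\tau_1\in E\le G_0$, since every Klein four-subgroup of $D_8$ contains the centre $Z(D_8)$. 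So again $Q\cap G_0\neq 1$, a contradiction. Therefore $G_0\le Q$.

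The main obstacle is making sure $Q\cap S$ really meets $G_0$. This rests on the structural fact that every non-cyclic subgroup of $D_8$ contains $Z(D_8)=\la\tau_1\ra$, and that $\tau_1$, being the central involution of $S$, is induced by the axis $a_1$ and so lies in $G_0$. If one prefers, one can phrase the argument as follows: a nontrivial normal $2$-part of $Q$ inside $S$ always contains $Z(S)$, and $Z(S)\le E\le G_0$.
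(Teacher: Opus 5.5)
Your proof is correct. For the simplicity of $Q$ it uses the same counting argument as the paper: the $2$-part of $|\hat G|$ is $2^3$ by Lemma \ref{S_is_Sylow_aut}, while each simple factor of $Q\cong L^k$ contributes at least $2^2$.

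The paper's proof does not justify $G_0\le Q$ at all. Your argument supplies that missing step: $S\cap Q$ is a nontrivial normal subgroup of $S\cong D_8$, so it contains $Z(S)=\la\tau_1\ra\le E\le G_0$, giving $G_0\cap Q\neq 1$. This step is genuinely needed here, because, unlike in the $M_{11}$ section, $S$ is not contained in $G_0$.
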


\begin{proof}
By Lemma \ref{S_is_Sylow_aut}, $2^3$ is the $2$-part of $|\hat{G}|$, hence $Q$ must indeed be simple.
\end{proof}

We are now in a position to prove the ultimate result of this section.

\begin{proposition}
We have  $\Aut(A)=G\cong PGL(2, 11)$
\end{proposition}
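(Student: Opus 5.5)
We know from Corollary \ref{Q_simple} that the minimal normal subgroup $Q$ of $\hat G=\Aut(A)$ is a non-abelian simple group with $G_0\le Q$. By Lemma \ref{S_is_Sylow_aut}, a Sylow $2$-subgroup of $\hat G$ is $S\cong D_8$. Our plan is to identify $Q$ first, then $C_{\hat G}(Q)$, and finally $\hat G$ inside $\Aut(Q)$.

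\emph{Step 1: $Q$ has a Klein four Sylow $2$-subgroup.} Since $Q\unlhd\hat G$, the group $Q\cap S$ is a Sylow $2$-subgroup of $Q$, and it contains $E$. It is either $E$ or all of $S$. Suppose $Q\cap S=S$. Then $Q$ has dihedral Sylow $2$-subgroups of order $8$, and by the Gorenstein--Walter classification \cite{GW} $Q$ is one of $L_2(q)$ with $q\equiv 7,9 \pmod{16}$ or $A_7$. We would rule these out as follows. In each of these groups all involutions are conjugate, so the involutions of $S\setminus E$ would be $Q$-conjugate to those in $E$. Those in $E$ are Miyamoto involutions, so the involutions of $S\setminus E$ would correspond to axes. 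This contradicts Computation \ref{prelim_comp_L2_11}(c), since those involutions lie in $G\setminus G_0$. Hence $Q\cap S=E$, and $Q$ has Sylow $2$-subgroup $E\cong 2^2$.

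\emph{Step 2: $Q=G_0$.} Simple groups with Klein four Sylow $2$-subgroups are $L_2(q)$ with $q\equiv 3,5\pmod 8$. The group $L_2(11)$ occurs here, and $G_0\le Q$. To pin down $Q$ we use $N_Q(E)\le\hat N=N\cong S_4$, where $N_Q(E)$ is $A_4$. Next, $C_Q(\tau_1)$ is dihedral of order $q\pm1$ and is contained in $C_{\hat G}(\tau_1)$. We bound the latter using the $6$-transposition property: a Miyamoto involution $\tau_1$ cannot invert elements of order greater than $6$ together with a conjugate, and products $\tau_1\tau_1^g$ have order at most $6$. The $6$-transposition property also forces all element orders in $Q$ arising as products of two conjugates of $\tau_1$ to be at most $6$. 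In $L_2(q)$ the dihedral subgroups $D_{q\pm1}$ are generated by two involutions, so $(q\pm1)/2\le 6$ for both signs, giving $q\le 13$. Combined with $L_2(11)\le Q$ and divisibility of $|Q|$, this leaves only $Q=L_2(11)=G_0$. This step is the main obstacle: one must make sure every element of $Q$ arising in the argument really is a product of two Miyamoto involutions, which holds because all involutions of $Q$ are conjugate to $\tau_1$.

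\emph{Step 3: conclusion.} Let $C=C_{\hat G}(Q)$, which is normal in $\hat G$. Since $C$ centralises $E$, we have $C\le C_{\hat N}(E)=E$. But $C\cap Q=1$, so $C=1$. Therefore $\hat G$ embeds in $\Aut(Q)=PGL(2,11)=G$. Since $G\le\hat G$, we conclude $\hat G=G$.
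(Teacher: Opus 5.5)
Your proof is correct. It follows the paper's skeleton: show $Q$ has Sylow $2$-subgroup $E$, apply Walter's classification to get $Q\cong L_2(q)$, bound $q$ with the $6$-transposition property, and embed $\hat G$ in $\Aut(Q)$. Two steps are handled differently.

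\emph{Excluding $S\le Q$.} You use the Gorenstein--Walter classification of simple groups with dihedral Sylow $2$-subgroups, together with the fact that each such group has a single class of involutions. The paper instead applies Thompson's transfer lemma in $\hat G$. Since the involutions of $S\setminus E$ are not $\hat G$-conjugate into $E$, there is an index-$2$ subgroup $\hat G_0\ge E$ not containing $S$. Then $Q\le\hat G_0$, which forces $Q\cap S=E$. The transfer route is elementary and avoids a deep classification theorem. You could also apply transfer directly inside the simple group $Q$ to get the fusion you need.

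\emph{The final embedding.} You prove $C_{\hat G}(Q)\le C_{\hat G}(E)=C_{\hat N}(E)=E\le Q$, hence $C_{\hat G}(Q)=1$. The paper instead asserts that $Q$ is the unique minimal normal subgroup and so $Q=F^*(\hat G)$. Your version is more self-contained, since it uses only the already established fact $\hat N=N\cong S_4$.

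Two minor points. The two inequalities $(q\pm1)/2\le 6$ give $q\le 11$, not $q\le 13$. This is harmless, since $660$ does not divide $|L_2(13)|=1092$. Also, your remarks in Step 2 about $N_Q(E)$ and $C_Q(\tau_1)$ play no role and can be dropped.
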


\begin{proof}
Clearly, $Q$ is the unique minimal normal subgroup of $G$.
We have seen that the involutions in $S\setminus E$ do not correspond to any axes, and so they are not conjugate in $\hat{G}$ to the involutions in $E$. Thompson's Transfer Lemma \cite[Theorem 12.1.1]{KS} implies
that $\hat{G}$ has a subgroup $\hat{G}_0$ of index $2$ containing $E$ but not $S$. Since $E\subseteq \hat{G}_0 \cap Q$, we must have 
$Q\subseteq \hat{G}_0$. Thus, $E$ is a Sylow $2$-subgroup of $Q$. That is to say, $Q$ is a non-abelian simple group with
an elementary abelian Sylow $2$-subgroup of order $4$. By Walter's characterisation of finite groups with abelian Sylow
$2$-subgroups \cite{W}, $Q\cong L_2(q)$ for $q\equiv 3,5 \mbox{ mod }8$, $q>3$. We use the $6$-transposition property
to bound $q$. Involutions in $L_2(q)$, for odd $q$, invert tori of sizes $\frac{q-1}{2}$ and $\frac{q+1}{2}$. The $6$-transposition property implies that $\frac{q+1}{2}\leq 6$, and hence $q\leq 11$. This leaves $q=11$ and $Q=G_0$ as the only possibility. Since $\hat{G}$ has a unique non-abelian minimal normal subgroup $Q$, we have $Q=F^\ast(G)$, the generalised Fitting sugbroup of $\hat{G}$.
Then $\hat{G}$ must be isomorphic to a subgroup of $\Aut(Q)\cong PGL(2,11)$, and so $\hat G=G\cong PGL(2,11)$.
\end{proof}

\section{The $286$-dimensional algebra for $M_{11}$}\label{M11 algebra}

The $286$-dimensional algebra $A=A_{286}$ for $G:=M_{11}$ was constructed initially 
by Seress \cite{Se}. We have it from the library built using the expansion 
algorithm \cite{ParAxlAlg,axialconstruction}. The axet $X$ in $A$ is a single 
$G$-orbit of $165$ axes, which are in a bijection with the involutions from 
$G$. The shape of $A$ is $\bullet$ (see Example \ref{286 shape}). Let us record the basic information concerning 
the adjoint action on $A$ of the axes from $X$.

\begin{computation}
For an axis $a\in X$, the $1$-, $0$-, $\frac{1}{4}$- and 
$\frac{1}{32}$-eigenspaces of $\ad_a$ have dimension $1$, $128$, $25$, and 
$132$, respectively.
\end{computation}

The algebra $A$ admits a positive definite Frobenius form.\footnote{It is likely unique up to scale, but our function cannot complete this case because of high dimension. Fortunately, we do not need this fact in our proof.} 

\begin{computation}
\begin{enumerate}
\item The algebra $A$ has no Jordan axes; and
\item The axes in the axet $X$ have no twins.
\end{enumerate}
\end{computation}

This suggests that we should not expect any new axes and so aim to prove
that $\Aut(A)=G$. Naturally, we identify $G$ with a subgroup of 
$\hat G:=\Aut(A)$.

Let $S$ be a Sylow $2$-subgroup of $G$. Then it is well-known that $S$ is 
isomorphic to the semidihedral group of order $16$. So $S$ contains a 
unique central involution $z$ and a further class $C$ of involution of 
size four. The five involutions in $\{z\}\cup C$ generate a subgroup $P\lhd S$ 
isomorphic to the dihedral group of order $8$. Within $P$, the class $C$ 
decomposes as a union of two conjugacy classes, both of length two. We note 
that $P$ is generated by any two involutions from $C$ that are not conjugate 
in $P$. 

Switching to axes, let $a$ be the axis corresponding to $z$ and let 
$\{a_1,a_2\}$ and $\{a_3,a_4\}$ be the axes corresponding to the remaining 
involutions from $P$, organised in $P$-orbits. Let $E$ be the subalgebra 
of $A$ generated, say, by $a_1$ and $a_3$. Then $E\cong 4B$ is of dimension 
$5$. It contains all five axes above and, in fact, $\{a,a_1,a_2,a_3,a_4\}$ 
is a basis of $E$. Furthermore, $V:=\la a,a_1,a_2\ra$ and $W:=\la a,a_3,a_4\ra$ 
are the two $2A$-subalgebras of $E$. Let $z_i:=\tau_{a_i}\in C$ for $1\leq 
i\leq 4$.

Let $\hat S$ be the normaliser of $P$ in $\hat G$. Then $S\leq\hat S$, and we 
aim to show that, in fact, equality holds. Clearly, the elements of $\hat S$ 
act on the set of two elementary abelian $2$-subgroups of $P$ of order $4$, 
$\la z_1,z_2\ra$ and $\la z_3,z_4\ra$. Let $\hat P$ be the normaliser in $\hat S$ 
of both of these subgroups. Then $\hat P\cap S=P$ and hence $\hat S=S\hat P$.
This implies that in order to show that $\hat S=S$ it suffices to show that 
$\hat P=P$.

Any $\psi\in\hat S$ fixes $z$ and hence also $a$, and so it leaves the eigenspaces 
of $\ad_a$ invariant. We start the analysis of the action of $\psi$ on $A$ by 
concentrating on the $128$-dimensional subalgebra $U:=A_0(a)$. In order to split 
this large subalgebra into smaller pieces, we will need additional idempotents.
Note that $\psi\in\hat S$ also leaves $E$ invariant and fixes the identity 
$e:=\frac{4}{5}(a_1+a_2+a_3+a_4)+\frac{3}{5}a$ of $E$. (See for instance 
\cite{NortSakIdemps}.) Consequently, $\psi$ also leaves the eigenspaces of $\ad_e$ 
invariant. 

We note that $u:=e-a\in U$ and hence $U$ is invariant under $\ad_u=\ad_e-\ad_a$. 
Since $\ad_a$ acts trivially on $U$, we conclude that $U$ is $\ad_e$-invariant and 
so we can decompose $U$ into smaller pieces with respect to $\ad_e$. We have the 
following facts established computationally.  

\begin{computation} \label{eigenvalues of u}
\begin{enumerate}
\item The restriction $\ad_e|_U=\ad_u|_U$ has eigenvalues $1$, $0$, $\frac{3}{10}$, 
$\frac{7}{20}$, $\frac{1}{20}$, and $\frac{1}{10}$ with multiplicities $2$, $26$, 
$14$, $12$, $50$, and $24$, respectively; and 
\item both the $1$- and $0$-eigenspaces of $\ad_e|_U$ are subalgebras 
of $U$.  
\end{enumerate}
\end{computation}

Clearly, all these eigenspaces are invariant under $\psi\in\hat S$. However, we can 
decompose these eigenspaces even further. Set $D:=U_1(u)$. Note that $v=e-1_V$ and 
$w=e-1_W$ are contained in $D$. Hence, $D$ is spanned by these two idempotents and 
so it is fully contained in $E$. Therefore, we can easily calculate in $D$ and 
establish that $vw=0$, which means that $D\cong\F^2$ and it contains exactly four 
idempotents: $0$, $v$, $w$, and $u=v+w$. We also record for completeness the lengths 
of these idempotents: $v$ and $w$ have length $\frac{7}{5}$ and $u$ has length 
$\frac{14}{5}$. Note that $\hat P$ stabilises both $V$ and $W$ (and hence also $v$ 
and $w$), while the elements from $\hat S\setminus\hat P$ switch $V$ and $W$ and 
also $v$ and $w$.

The eigenvalues and their multiplicities for $\ad_v|_U$ and $\ad_w|_U$ can be deduced 
from Computation \ref{eigenvalues of u}. The exact and more detailed statement is as 
follows.

\begin{computation} \label{eigenvalues of v and w}
\begin{enumerate}
\item The restrictions $\ad_v|_U$ and $\ad_w|_U$ have eigenvalues $1$, $0$, 
$\frac{3}{10}$, and $\frac{1}{20}$ with respective multiplicities $1$, $59$, $13$, 
and $55$;
\item $v$ and $w$ are primitive axes of $U$ of almost Monster type 
$\left(\frac{3}{10},\frac{1}{20}\right)$;
\item the involution $\tau_v$ coincides with $z_3|_U=z_4|_U$ and, symmetrically, 
$\tau_w=z_1|_U=z_2|_U$.
\end{enumerate}
\end{computation}

Note that $\tau_v$ and $\tau_w$ are only defined on $U$.

\medskip
We have already mentioned that the automorphisms from $\hat S\setminus\hat P$ switch 
$v$ and $w$, and so such a regular behaviour of these elements is not unexpected. 
Moreover, since $vw=0$ and since $v$ and $w$ obey an almost Monster type fusion law, 
which is Seress, we conclude that $v$ and $w$ associate on $U$.

Our first goal is to determine the action of $\psi\in\hat S$ on the $26$-dimensional 
subalgebra $J=U_0(e)=U_0(u)$. Ideally, we would like to find idempotents in $J$ 
and then see how $\hat S$ can act on the idempotents. However, the dimension 
$26$ is high and we cannot do this directly. Instead, we find a smaller subalgebra 
of $J$ that is invariant under the action of $\hat S$.

\begin{computation} \label{action of S}
\begin{enumerate}
\item The group $P$ acts on $J$ as identity while $S$ induces on $J$ a group of order 
$2$;
\item the subalgebra $J_0$ of $J$ consisting of all vectors fixed by $S$ has dimension 
$14$.
\end{enumerate}
\end{computation} 

The subalgebra $J_0$ has a much better dimension. However, the issue is: why is $J_0$ 
invariant under $\hat S$? To show this, we temporarily leave $U$.

\begin{computation} \label{splitting J}
\begin{enumerate}
\item The eigenspace $A_0(u)$ has dimension $30$;
\item setting $K$ to be the $4$-dimensional orthogonal complement to $J$ in 
$A_0(u)$, we have that the projections to $J$ of the squares of the elements 
from $K$ span $J_0$. 
\end{enumerate}
\end{computation}

Since $u$ is fixed by $\hat S$, this group leaves both $J$ and $A_0(u)$ invariant and, 
consequently, it also preserves $K$. Applying Computation \ref{splitting J} (b), we now 
deduce that $\hat S$ also preserves $J_0$. 

Although we do not need it for our calculation, we note that $K$ contains the axis $a$ 
and it decomposes as $K=\la a\ra\perp K_0$, where the $3$-dimensional subspace $K_0$ 
is also invariant under $\hat S$. One can also check that $K_0\subseteq 
A_{\frac{1}{4}}(a)$, which implies that, when we compute the projections to $J$ of the 
squares of elements from $K$, we can restrict ourselves to $K_0$, as all products 
involving multiples of $a$ will have zero projection to $J$. Additionally, some of the elements 
from $S\setminus P$ negate all of $K_0$, while the rest fix $K_0$, which implies that squares of elements from 
$K_0$ are fixed by $S$ and so their projections to $J$ will certainly lie in $J_0$. 
Hence, the nontrivial part of the above calculation is just that they span all of 
$J_0$.

\medskip
Thus, we have established the following.

\begin{lemma}
The subalgebra $J_0$ is invariant under the action of $\hat S$.
\end{lemma}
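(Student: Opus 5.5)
The plan is to build $J_0$ from subspaces that are canonically attached to $\hat S$-invariant data, so that its invariance follows by transport of structure.

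\emph{Step 1: $\hat S$ fixes the relevant idempotents.} Every $\psi\in\hat S$ normalises $P$, so it fixes the unique central involution $z$ of $P$. Hence it fixes the axis $a$; here we use that the map $a\mapsto\tau_a$ is a bijection on $X$ and that $\psi$ conjugates $\tau_a$ to $\tau_{a^\psi}$. The group $\hat S$ also permutes the four remaining involutions of $P$, and therefore permutes the axes $a_1,\dots,a_4$. It follows that $\psi$ preserves $E=\la a,a_1,a_2,a_3,a_4\ra$ and fixes its identity $e$. Consequently $\psi$ fixes $u=e-a$.

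\emph{Step 2: the relevant subspaces are $\hat S$-invariant.} Since $\psi$ is an automorphism fixing $a$ and $u$, it commutes with $\ad_a$ and $\ad_u$. Therefore it preserves $U=A_0(a)$, the eigenspace $A_0(u)$, and their intersection $J=U_0(u)$, which is the $0$-eigenspace of $\ad_u$ inside $U$.

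\emph{Step 3: the orthogonal complement $K$ is $\hat S$-invariant.} This is the one delicate point, because the paper has not proved that the Frobenius form is unique, so we cannot simply assume $\hat S$ preserves it. Instead I would avoid the form. I would note that $J=A_0(u)\cap A_0(a)$, while $K$ is the orthogonal complement of $J$ in $A_0(u)$. The first approach is to identify $K$ intrinsically as the sum of the nonzero eigenspaces of $\ad_a$ restricted to $A_0(u)$. This works because $\ad_a$ is self-adjoint for the form and commutes with $\ad_u$ on $A_0(u)$, as $a$ and $u$ associate; one can check this directly, since $a\in\la a\ra$ and $K_0\subseteq A_{\frac14}(a)$ by the remark above. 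In other words, $A_0(u)=\bigoplus_\lambda (A_0(u)\cap A_\lambda(a))$, and the $0$-part of this decomposition is $J$. The complement $K$ is then the sum of the $1$- and $\frac14$-parts, and it is orthogonal to $J$ automatically, because distinct eigenspaces of a self-adjoint operator are orthogonal. Since $\psi$ fixes $a$ and $u$, it preserves each of these parts, and hence preserves $K$. A dimension check confirms that this sum is $4$-dimensional. If the joint diagonalisation failed, the fallback would be to invoke positivity together with uniqueness of the form on the subalgebra generated.

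\emph{Step 4: conclusion.} The projection $\pi\colon A_0(u)\to J$ along $K$ commutes with $\psi$, because $\psi$ preserves both $J$ and $K$. Therefore, for every $k\in K$,
\[
\pi(k^2)^\psi=\pi\bigl((k^\psi)^2\bigr),
\]
and $k^\psi\in K$. By Computation \ref{splitting J}(b), the elements $\pi(k^2)$ span $J_0$. It follows that $J_0^\psi\subseteq J_0$, and hence $J_0^\psi=J_0$ by comparing dimensions. The main obstacle is Step 3: justifying that $K$ is intrinsic without assuming that $\hat S$ preserves the form.
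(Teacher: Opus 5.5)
Your outline matches the paper's. $\hat S$ fixes $u$, so it preserves $J$, $A_0(u)$ and $K$, and Computation~\ref{splitting J}(b) then forces $J_0^\psi=J_0$.

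The paper only asserts that $K$ is preserved, which tacitly uses that $\hat S$ preserves the Frobenius form. Your Step~3 removes that dependence and is correct. You can drop the hedging there. Since $u\in A_0(a)$ and the Monster fusion law is Seress, $\ad_u$ preserves every $A_\lambda(a)$, so $\ad_a$ and $\ad_u$ commute unconditionally. You also do not need the remark $K_0\subseteq A_{\frac14}(a)$. The sum of the nonzero $\ad_a$-eigenspaces of $A_0(u)$ is $A_0(u)\cap\mathrm{Im}\,\ad_a$. It is orthogonal to $J$ by self-adjointness of $\ad_a$ and has dimension $30-26=4$, so it equals $K$.

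\textbf{Gap in Step~4.} Your $\pi$ is defined only on $A_0(u)$, but nothing established puts $k^2$ in $A_0(u)$. The fusion law of $u$ on $A$ is unknown, and $A_0(u)$ is not known to be a subalgebra. Writing $k=\lambda a+k_0$ with $k_0\in A_{\frac14}(a)$, you only get $k^2\in A_1(a)\oplus A_0(a)$. There is no reason for $\ad_u$ to kill the $A_0(a)$-component of $k_0^2$, which is why the paper speaks of the \emph{projection} of $k^2$ to $J$.

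The fix is your own idea applied on all of $A$. Let $\pi\colon A\to J$ be the projection along $\mathrm{Im}\,\ad_a+\mathrm{Im}\,\ad_u$.
\begin{enumerate}
\item For a self-adjoint $T$ and a nondegenerate form, $(\ker T)^\perp=\mathrm{Im}\,T$. Hence this subspace is $(A_0(a)\cap A_0(u))^\perp=J^\perp$, and $\pi$ is the orthogonal projection onto $J$.
\item On $A_0(u)$, $\pi$ restricts to your projection along $K$. So whichever of these two projections Computation~\ref{splitting J}(b) uses, it is this $\pi$.
\item $\pi$ commutes with every $\psi\in\hat S$, because $\psi$ fixes $a$ and $u$ and therefore preserves $\mathrm{Im}\,\ad_a$ and $\mathrm{Im}\,\ad_u$.
\end{enumerate}
With this $\pi$, your final display and dimension count go through unchanged.

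\textbf{Minor point in Step~1.} The bijection between $X$ and the involutions of $G$ is not quite what you need, since $a^\psi$ need not lie in $X$. To get $a^\psi=a$ from $\tau_{a^\psi}=z$, you need that $a$, and likewise each $a_i$, has no twin among all axes of $A$. That is the paper's no-twins computation.
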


This subalgebra is still too big to try and find all idempotents in it, but we can find 
idempotents of certain fixed length.

\begin{computation} \label{unique idempotents}
\begin{enumerate}
\item The subalgebra $J_0$ has a unique Frobenius form up to scale;
\item it has a unique idempotent of length $\frac{26}{5}$;
\item it also contains a unique idempotent of length $\frac{36}{5}$; and
\item the above two idempotents generate all of $J_0$.
\end{enumerate}
\end{computation}

Clearly, all elements of $\hat S$ fix both of these idempotents, which gives us the 
following important fact.

\begin{corollary} \label{trivial}
The group $\hat S$ acts trivially on $J_0$.
\end{corollary}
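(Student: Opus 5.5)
The plan is to combine the invariance of $J_0$ under $\hat S$ (the preceding Lemma) with Computation~\ref{unique idempotents}, in the same spirit as the proof of Lemma~\ref{autU'lemma}.

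Fix $\psi\in\hat S$. By the Lemma, $\psi$ restricts to an automorphism $\psi|_{J_0}$ of the subalgebra $J_0$. We first need $\psi|_{J_0}$ to preserve lengths. By Computation~\ref{unique idempotents}(a), $J_0$ has a unique Frobenius form up to scale. The pullback of this form under $\psi|_{J_0}$ is again a Frobenius form on $J_0$, so it equals $c$ times the original for some nonzero scalar $c$.

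To show that $c=1$, we look for an element of $J_0$ that $\psi$ visibly fixes. A cleaner route is to observe that the Frobenius form on $J_0$ is the restriction of the positive definite Frobenius form on $A$. Here the $\hat S$-invariance of lengths is not automatic, since uniqueness of the form on $A$ is unknown. So we stay inside $J_0$ and argue via idempotents. By Computation~\ref{unique idempotents}(b),(c), let $f_1$ and $f_2$ be the idempotents of lengths $\frac{26}{5}$ and $\frac{36}{5}$. Then $\psi(f_i)$ is an idempotent of length $c\cdot\frac{26}{5}$, respectively $c\cdot\frac{36}{5}$, with respect to the form.

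To pin down $c$, use the standard fact that if $f\in J_0$ is an idempotent and $\ad_f$ has eigenvalues $\lm_k$ on $J_0$, then the trace-type identities fix its length up to the scale of the form. More simply, the ratio of the lengths of any two idempotents is scale-independent. If $c\neq 1$, the set of idempotents of $J_0$ would contain idempotents of lengths $c\frac{26}{5}$ and $c\frac{36}{5}$, with $\psi^{-1}$ giving the converse. Since $\psi$ has finite order (as $\hat G$ is finite), iterating gives $c^m=1$ with $c$ real and positive, because both forms are positive definite on these vectors. Hence $c=1$.

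Therefore $\psi$ maps $f_1$ to an idempotent of length $\frac{26}{5}$, and by uniqueness $\psi(f_1)=f_1$. Likewise $\psi(f_2)=f_2$. By Computation~\ref{unique idempotents}(d), $f_1$ and $f_2$ generate $J_0$, so $\psi$ fixes every element of $J_0$. This shows that $\hat S$ acts trivially on $J_0$.

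The only delicate point is the scale argument showing that $\psi$ preserves the form exactly rather than up to a scalar. The finite-order plus positivity argument settles it.
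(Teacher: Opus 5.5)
Your proof is correct and follows the paper's route. The Lemma gives $\hat S$-invariance of $J_0$, and Computation~\ref{unique idempotents} then forces every $\psi\in\hat S$ to fix the unique idempotents of lengths $\frac{26}{5}$ and $\frac{36}{5}$, which generate $J_0$. The paper leaves the preservation of lengths implicit; your argument fills this in correctly: $\psi$ rescales the unique form by some $c$ with $c^m=1$ because $\hat G$ is finite, and $c>0$ by positive definiteness, so $c=1$. Your remarks about trace identities and ratios of lengths are not needed.
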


Let $B$ be the $12$-dimensional complement to $J_0$ in $J$. It is a module for $J_0$.

\begin{computation} \label{action on B}
\begin{enumerate}
\item The space of extensions of the identity automorphism of $J_0$ to $B$ is 
$1$-dimensional consisting of scalar actions on $B$;
\item there exist elements $b\in B$ such that the projection of $b^2$ to $J_0$ is 
nonzero. 
\end{enumerate}
\end{computation}

Note that the claim in (b) is equivalent to the fact that $B$ is not a subalgebra.

\medskip
We can now determine the action of $\hat S$ on the $J$.

\begin{lemma}
The group $\hat S$ induces on $J$ a group of order $2$ that fixes $J_0$ element-wise
and negates all elements of the complement $B$. 
\end{lemma}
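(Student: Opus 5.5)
The plan follows the same pattern as Lemmas \ref{kernel U76} and \ref{U kernel}. First, $J$ is invariant under $\hat S$: every $\psi\in\hat S$ fixes $a$ and $e$, hence also $u=e-a$, and so preserves $U=A_0(a)$ and $J=U_0(u)$. By the preceding lemma $\psi$ leaves $J_0$ invariant, and by Corollary \ref{trivial} it acts trivially on $J_0$. We take $B$ to be the orthogonal complement of $J_0$ in $J$ with respect to the positive definite Frobenius form. Every automorphism in $\hat S$ preserves this form; if needed we can use that the form on $J_0$ is unique up to scale (Computation \ref{unique idempotents}(a)) and that the fixed idempotents there have nonzero length, which fixes the scale. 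Hence $\psi$ also preserves $B$. So $\psi|_J$ is the identity on $J_0$ and maps the $J_0$-module $B$ to itself. This makes $\psi|_B$ an extension of the identity automorphism of $J_0$ to $B$, and by Computation \ref{action on B}(a) we get $\psi|_B=\lm\,\mathrm{id}_B$ for some scalar $\lm$.

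Next we pin down $\lm$ using Computation \ref{action on B}(b). Choose $b\in B$ with $(b^2)_{J_0}\ne0$, where the subscript denotes projection to $J_0$ along $B$. Since $\psi$ preserves the decomposition $J=J_0\oplus B$, it commutes with this projection, so
\[(b^2)_{J_0}=\bigl((b^2)_{J_0}\bigr)^\psi=\bigl((b^\psi)^2\bigr)_{J_0}=\lm^2(b^2)_{J_0}.\]
Equivalently, for any $j\in J_0$ with $(b^2,j)\ne0$ we have $(b^2,j)=((b^\psi)^2,j^\psi)=\lm^2(b^2,j)$. Either way $\lm^2=1$, so $\lm=\pm1$. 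Therefore $\hat S$ induces on $J$ a group of order at most $2$, consisting of the identity and the map that fixes $J_0$ and negates $B$.

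Finally, the lower bound comes from Computation \ref{action of S}(a): $S\le\hat S$ already induces a group of order $2$ on $J$. Its non-identity element fixes $J_0$, which is the fixed space of $S$ (Computation \ref{action of S}(b)), and it must be the map $-\mathrm{id}_B$ on $B$, since $\lm=1$ would give the identity. This yields the statement.

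The only delicate point is justifying that $B$ is $\hat S$-invariant, which requires that $\hat S$ preserves the Frobenius form at least on $J$. We have no uniqueness of the form on all of $A$, so if preservation of the form is not automatic I would argue differently. One option is to define $B$ intrinsically, as the sum of the nontrivial eigenspaces of some element of $S\setminus P$ acting on $J$. The other is to note that $\hat S$ preserves the form on $J$ because $J$ is generated by $J_0$ together with $B$ and the form restricted to $J$ is unique up to scale. Which of these works depends on details the paper would check computationally.
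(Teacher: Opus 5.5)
Your argument is the paper's proof. In both, $\psi\in\hat S$ is trivial on $J_0$ by Corollary~\ref{trivial} and acts on $B$ as a scalar $\lm$ by Computation~\ref{action on B}(a). Comparing the $J_0$-projections of $b^2$ and $(b^\psi)^2$ gives $\lm^2=1$, and elements of $S\setminus P$ realise $\lm=-1$.

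You are right that the step $\psi(B)=B$ is needed. The paper passes over it silently, tacitly treating automorphisms as form-preserving. However, none of your three patches settles it as stated:
\begin{enumerate}
\item Uniqueness of the form on $J_0$ only controls $\psi$ on $J_0$. The statement $(j,\psi(b))=0$ for $j\in J_0$ is exactly what has to be shown, so this is circular.
\item Defining $B$ as the $-1$-eigenspace of some $s\in S\setminus P$ only shows that $\psi(B)$ is the $-1$-eigenspace of $\psi s\psi^{-1}$. That element is not yet known to act on $J$ like $s$, which is essentially the content of the lemma.
\item Uniqueness of the form on $J$ is not among the paper's computations.
\end{enumerate}

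The gap closes without any form. Since $\Aut(A)$ is finite, so is $\hat S$, and we are in characteristic $0$. By Maschke's theorem $J_0$ has an $\hat S$-invariant complement $C$ in $J$. Any $s\in S\setminus P$ acts on $J$ as $1$ on $J_0$ and $-1$ on $B$. This uses only that $S\le\Miy(A)$ preserves the Frobenius form, which holds because each $\ad_a$ is self-adjoint and so each $\tau_a$ is an isometry. Hence $s$ acts as $-1$ on $J/J_0$. Since $C$ is $s$-invariant and maps $s$-equivariantly and isomorphically onto $J/J_0$, $s$ negates $C$. Therefore $C\subseteq B$, and $C=B$ because both have dimension $12$. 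Equivalently, you could average a positive definite form over $\hat S$: $B$ is then the orthogonal complement of $J_0$ in $J$ for an $\hat S$-invariant form.
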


\begin{proof}
For $\psi\in\hat S$, we have from  Corollary \ref{trivial} that $\psi$ acts as identity 
on $J_0$. Computation \ref{action on B} (a) now gives us that $\psi$ acts on $B$ as 
a scalar $\lm$. By Computation \ref{action on B} (b), we have $b\in B$ such that $b^2$ 
has a nonzero projection to $J_0$. Clearly, $(b^\psi)^2=\lm^2b^2$ has the same projection 
on $J_0$ as $b^2$, which implies that $\lm^2=1$. So we only have two possible actions 
for $\psi$. On the other hand, we know that the elements in $S\setminus P$ negate 
$B$, so the full group of order $2$ is induced on $J$.  
\end{proof}

We know that the kernel of $\hat S$ acting on $J$ has index $2$ in $\hat S$ and it contains 
$P$. A natural conjecture is that this kernel coincides with $\hat P$, and this is what we now aim to prove. Let $T:=U_{\frac{7}{20}}(e)$. According 
to Computation \ref{eigenvalues of u} (a), this is a subspace of $U$ of 
dimension $12$.
 
\begin{computation} \label{kern_Uc}
We have that $\lla T\rra=U$.
\end{computation}

Since $\ad_e|_U=\ad_u|_U$ and since $u=v+w$, with $v$ and $w$ associating, we 
can easily see that $T=T_1\oplus T_2$, where $T_1=U_{\frac{3}{10}}(v)\cap 
U_{\frac{1}{20}}(w)$ and $T_2=U_{\frac{1}{20}}(v)\cap U_{\frac{3}{10}}(w)$. 
Furthermore, both $T_1$ and $T_2$ have equal dimension $6$. Indeed, the 
elements from $S\setminus P$ swap $v$ and $w$ while preserving $T$, and so 
they swap $T_1$ and $T_2$.

Let $\rho$ be the non-identity automorphism induced by $\hat S$ on $J$. 
To prove that the kernel of $\hat S$ acting on $J$ is 
$\hat P$ we will show that every automorphism of $U$ acting on $J$ as $\rho$ 
necessarily switches $T_1$ and $T_2$. By contradiction, suppose that we have 
an automorphism acting on $J$ as $\rho$ and preserving both $T_1$ and $T_2$.
Recall that $B$ is the orthogonal complement of $J_0$ in $J$ and that $\rho$ 
negates $B$. 

\begin{computation}\label{extensions of rho}
\begin{enumerate}
\item The map $\rho$ admits a $1$-dimensional space of extensions to each 
$T_i$, $i=1,2$;
\item there are nonzero elements $t_i\in T_i$, $i=1,2$, and $b\in J_0$ such 
that $(t_i^2,b)\neq 0$.
\end{enumerate}
\end{computation}

Part (b) allows us to limit the possibilities for an automorphism $\phi$ 
extending $\rho$ and preserving both $T_1$ and $T_2$.

\begin{lemma}
There are four possible extensions $\phi$ of $\rho$ to $T_1$ and $T_2$.  
\end{lemma}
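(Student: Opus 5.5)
We will argue exactly as in Lemma \ref{kernel U76} and Lemma \ref{U kernel}, but relative to the fixed map $\rho$ on $J$ rather than the identity. Let $\phi$ be an automorphism of $U$ that acts on $J$ as $\rho$ and preserves both $T_1$ and $T_2$. By Computation \ref{extensions of rho}(a), the space of linear maps on $T_i$ compatible with the module action of $J$ twisted by $\rho$ is $1$-dimensional. We will first exhibit one nonzero such map $\rho_i$ on each $T_i$, for instance the restriction of an element of $S\setminus P$ composed with the appropriate swap, or simply a generator supplied by the computation. Then every extension of $\rho$ to $T_i$ has the form $\lm_i\rho_i$ for a scalar $\lm_i$. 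Since $\rho$ is an involution on $J$, we will normalise $\rho_i$ so that $\rho_i^2$ acts trivially, or at least so that the relevant projections below are fixed.

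Next we will use Computation \ref{extensions of rho}(b). Take $t_i\in T_i$ and $b\in J_0$ with $(t_i^2,b)\neq0$. The element $b$ lies in $J_0$, and $\rho$ fixes $J_0$ pointwise. Using the Frobenius form, which is preserved by $\phi$ because it is the restriction of the form on $A$, we get
$$(t_i^2,b)=((t_i^2)^\phi,b^\phi)=((\lm_i t_i^{\rho_i})^2,b)=\lm_i^2((t_i^{\rho_i})^2,b).$$
Applying the same identity to the reference extension $\rho_i$ itself, or choosing $\rho_i$ to be the restriction of an actual automorphism normalised so that $((t_i^{\rho_i})^2,b)=(t_i^2,b)$, forces $\lm_i^2=1$. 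Hence $\lm_i=\pm1$ independently for $i=1,2$. This gives at most $2\times2=4$ possible extensions $\phi|_{T_1\oplus T_2}$. Because $\lla T\rra=U$ by Computation \ref{kern_Uc}, each such extension determines $\phi$ on all of $U$. This yields the claimed four possibilities.

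The main obstacle is justifying the normalisation of $\rho_i$. We need one extension whose effect on the projection of $t_i^2$ to $J_0$ is known. Since we are assuming, for a contradiction, that no actual automorphism of this type exists, we cannot simply take the restriction of a known automorphism. The first attempt will be to use that $\rho_i^2$ is a $J$-module endomorphism of $T_i$ compatible with the identity on $J$, hence a scalar $c$. This follows from a $1$-dimensionality analogous to that in Computation \ref{extensions of rho}(a), which would need to be checked computationally. We then rescale $\rho_i$ so that $c=1$, possibly after extending scalars, and obtain the required equality from $(t_i^2,b)$ being nonzero. If this fails, we will instead state the lemma as follows: the scalar $\lm_i$ is determined up to sign by the equation $\lm_i^2((t_i^{\rho_i})^2,b)=(t_i^2,b)$. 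That still gives exactly four candidates, which is all the subsequent argument needs.
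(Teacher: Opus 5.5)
Your fallback at the end is the paper's proof. There, $\rho_i$ (the paper's $\phi_i$) spans the one-dimensional space of extensions from Computation \ref{extensions of rho}(a), and $\phi|_{T_i}=\lm_i\rho_i$. Using $b\in J_0$, $b^\rho=b$ and invariance of the form gives
$$\lm_i^2=\frac{(t_i^2,b)}{((t_i^{\rho_i})^2,b)}.$$
The paper also computes this ratio explicitly and finds it is a nonzero square $c_i^2$ with $c_i\in\Q$. That step is what you need to claim ``exactly four'': it gives $\lm_i=\pm c_i$, hence precisely the pairs $(\pm c_1,\pm c_2)$. Without it you only get ``at most four''. If the denominator vanished, or the ratio were not a rational square, there would be no candidates at all. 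That would still suffice for the later contradiction via Computation \ref{extensions rho not auts}, but the count would be zero rather than four.

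Your main route, normalising so that $\lm_i^2=1$, does not work, so drop it.
\begin{itemize}
\item Normalising $\rho_i^2$ to the identity is possible. Indeed $\rho_i^2$ extends $\mathrm{id}_J$, so it is a scalar by Computation \ref{kern_U}, which is the check you said was still needed. Making that scalar $1$ may require a square root, i.e.\ extending scalars.
\item But $\rho_i^2=\mathrm{id}$ does not give $((t_i^{\rho_i})^2,b)=(t_i^2,b)$. The extension condition only controls the action $J\times T_i\to T_i$. The quantity you need involves the product $T_i\times T_i\to J_0$, equivalently whether $\rho_i$ preserves the form $(x,y)\mapsto(x,yb)$ on $T_i$. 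Nothing stops an involutive $\rho_i$ from rescaling that form by $-1$, which would force $\lm_i^2=-1$.
\item The nonvanishing of $(t_i^2,b)$ gives no equality of this kind.
\item ``Applying the same identity to $\rho_i$ itself'' is circular: $\rho_i$ is not known to be the restriction of an automorphism, and that is exactly what is being refuted.
\item Restricting an element of $S\setminus P$ gives nothing either, since such elements interchange $T_1$ and $T_2$.
\end{itemize}
You never need $\lm_i^2=1$, only that $\lm_i^2$ equals a computable constant, which is how the paper argues.
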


\begin{proof}
Let $\phi_i$, $i=1,2$, be the basis of the space of extensions of $\rho$ to 
$T_i$ found in Computation \ref{extensions of rho} (a). Then $\phi|_{T_i}= 
\lm_i\phi_i$, with the value of $\lm_i$ to be determined. Since $\phi$ is an automorphism of $U$, $((t_i)^2,b)=((t_i^\phi)^2,b^\phi)
=((t_i^{\lm_i\phi_i})^2,b^\rho)=((\lm_i t_i^{\phi_i})^2,b)
=\lm_i^2((t_i^\phi)^2,b)$, since $\rho$ is identity on $J_0$, and consequently, we get the equation
\begin{equation}\label{rho on J0}
\lm_i^2=\frac{(t_i^2,b)}{((t_i^{\phi_i})^2,b)}.
\end{equation}
The right side can be computed explicitly and it turns out to be a square 
$c_i^2$ for some $c_i\in\Q$. Hence this equation gives us that $\lm_i=\pm 
c_i$, and so indeed we have four possible extensions $\phi$ to $T=T_1\oplus 
T_2$ corresponding to $(\lm_1,\lm_2)\in\{(c_1,c_2),(c_1,-c_2),(-c_1,c_2),
(-c_1,-c_2)\}$. 
\end{proof}

Note that, for each such $\phi$, we know its action on $J$ and on $T$. By 
Computation \ref{kern_Uc}, $\la J\oplus T\ra=U$, so we can try extending these four 
maps to the entire $U$ and see if they are indeed automorphisms. As it turns 
out, none of them is.

\begin{computation}\label{extensions rho not auts}
None of the possible $(\lm_1,\lm_2)$ leads to an automorphism of $J$.
\end{computation}

This establishes the result we aimed to prove.

\begin{lemma}
The kernel of $\hat S$ acting on $J$ is precisely $\hat P$. 
\end{lemma}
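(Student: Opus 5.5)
We have to show two inclusions: $\hat P$ lies in the kernel of $\hat S$ acting on $J$, and every element of that kernel lies in $\hat P$.

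\emph{First inclusion.} I would first pin down the index. By the preceding lemma, $\hat S$ induces a group of order $2$ on $J$, namely $\la\rho\ra$, so the kernel $\hat S_J$ has index exactly $2$ in $\hat S$. Also $\hat P$ has index at most $2$ in $\hat S$, since $\hat S$ acts on the two-element set $\{\la z_1,z_2\ra,\la z_3,z_4\ra\}$ with kernel $\hat P$. The elements of $S\setminus P$ interchange these two subgroups, so the index is exactly $2$. It therefore suffices to show that no $\psi\in\hat P$ acts on $J$ as $\rho$. Then $\hat P\le\hat S_J$, and equality follows from the indices.

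\emph{Reduction to $T_1$ and $T_2$.} Suppose $\psi\in\hat P$ induces $\rho$ on $J$. Since $\psi$ normalises both $\la z_1,z_2\ra$ and $\la z_3,z_4\ra$, it stabilises the $2A$-subalgebras $V$ and $W$, hence their identities $\one_V$ and $\one_W$. It also fixes $e$, so it fixes $v=e-\one_V$ and $w=e-\one_W$. Consequently $\psi$ preserves $U=A_0(a)$ and the eigenspaces of $\ad_v|_U$ and $\ad_w|_U$. In particular it preserves
\[
T_1=U_{\frac{3}{10}}(v)\cap U_{\frac{1}{20}}(w) \quad\text{and}\quad T_2=U_{\frac{1}{20}}(v)\cap U_{\frac{3}{10}}(w).
\]
Thus $\psi|_U$ is an automorphism of $U$ that acts on $J$ as $\rho$ and stabilises each $T_i$.

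\emph{The contradiction.} By the previous lemma, built on Computation \ref{extensions of rho}, the restriction of $\psi$ to $T=T_1\oplus T_2$ is one of the four maps given by $(\lm_1,\lm_2)=(\pm c_1,\pm c_2)$. Its restriction to $J$ is $\rho$. By Computation \ref{kern_Uc}, $J\oplus T$ generates $U$, so $\psi|_U$ is uniquely determined by these data. Computation \ref{extensions rho not auts} says that none of the four candidates extends to an automorphism; it is stated as ``of $J$'', but it should be read as ``of $U$''. This contradicts the existence of $\psi$. Hence $\hat P\le\hat S_J$, and the index count gives $\hat S_J=\hat P$.

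\emph{Main obstacle.} The delicate point is the index bookkeeping. We need that $\hat P$ really has index $2$ rather than $1$ in $\hat S$, which is witnessed by $S\setminus P$ swapping the two Klein four-subgroups. We also need to be sure that preserving $T_1$ and $T_2$ individually is exactly the property that distinguishes $\hat P$ within $\hat S$.
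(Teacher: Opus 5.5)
Your proposal is correct and takes the same approach as the paper. An element of $\hat P$ inducing $\rho$ on $J$ would fix $v$ and $w$, hence preserve $T_1$ and $T_2$, and is then excluded by the four-extensions lemma together with Computation \ref{extensions rho not auts}. You make explicit the index-$2$ bookkeeping that the paper compresses into ``then $\hat P$ is not contained in the kernel'', and you are right to read that computation as concerning automorphisms of $U$ rather than $J$.
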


\begin{proof}
Suppose that this is not true. Then $\hat P$ is not contained in the kernel, 
and hence there must exist $\phi\in\hat P$ that acts as no-identity on $J$, 
i.e., it induces $\rho$ on $J$. On the other hand, being in $\hat P$, $\phi$ 
fixes $v$ and $w$ and, consequently, it preserves $T_1$ and $T_2$. However, 
we have shown that no such $\phi$ exists, and the contradiction proves the 
claim.
\end{proof}

We are now in a position to show that $\hat P=P$ and hence $\hat S=S$. We 
first study the automorphisms $\hat P$ induces on $U$.  

\begin{computation}\label{kern_U}
The identity automorphism on $J$ has a $1$-dimensional space of extensions 
to each $T_i$, $i=1,2$.
\end{computation}

Clearly, this $1$-dimensional space of extensions is spanned by the 
identity map on $T_i$. This leads to the following. 

\begin{lemma} \label{action on U}
The automorphism group $\hat P$ induces on $U$ a group of order $4$, same 
as induced by $P=\la z_1,z_3\ra$.
\end{lemma}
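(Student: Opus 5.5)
Every $\psi\in\hat P$ fixes $J$ pointwise and preserves $T_1$ and $T_2$. By Computation \ref{kern_U} it then acts on each $T_i$ as a scalar, and since $\la J\oplus T\ra=U$ it is determined by these scalars. So the plan is to pin down the restriction of $\psi$ to $J\oplus T$, then compare the number of possible restrictions with what $P$ already induces on $U$.

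First, take $\psi\in\hat P$. By the preceding lemma, $\psi$ lies in the kernel of $\hat S$ acting on $J$, so $\psi|_J=\mathrm{id}_J$. Since $\psi$ fixes $v$ and $w$ and commutes with their adjoints on $U$, it preserves $T_1=U_{\frac{3}{10}}(v)\cap U_{\frac{1}{20}}(w)$ and $T_2=U_{\frac{1}{20}}(v)\cap U_{\frac{3}{10}}(w)$. Hence $\psi|_{T_i}$ is an extension of the identity on $J$ to the $J$-module $T_i$. By Computation \ref{kern_U} it equals $\mu_i\,\mathrm{id}_{T_i}$ for some scalar $\mu_i$.

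Next, constrain the scalars using Computation \ref{extensions of rho}(b): there are $t_i\in T_i$ and $b\in J_0\subseteq J$ with $(t_i^2,b)\ne 0$. Since $\psi$ fixes $b$ and preserves the Frobenius form,
\[
(t_i^2,b)=((t_i^\psi)^2,b^\psi)=\mu_i^2(t_i^2,b),
\]
which gives $\mu_i=\pm1$. Thus $\psi|_{J\oplus T}$ is one of at most four maps $(\mu_1,\mu_2)\in\{\pm1\}^2$. By Computation \ref{kern_Uc}, $U$ is generated by $T$, so $\psi|_U$ is determined by $(\mu_1,\mu_2)$, and $\hat P$ induces on $U$ a group of order at most $4$.

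For the lower bound, we show $P=\la z_1,z_3\ra$ induces all four sign patterns. By Computation \ref{eigenvalues of v and w}(c), $z_3|_U=\tau_v$, which negates $U_{\frac{1}{20}}(v)$ and fixes $U_{\{1,0,\frac{3}{10}\}}(v)$. So $z_3$ acts as $(\mu_1,\mu_2)=(1,-1)$. Symmetrically, $z_1|_U=\tau_w$ acts as $(-1,1)$, and $z_1z_3$ acts as $(-1,-1)$. These three restrictions together with the identity are distinct, so $P$ induces a group of order $4$ on $U$, and the induced groups coincide.

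The only point that needs care is that the form is preserved by $\psi$, since uniqueness of the Frobenius form on $A$ is not established. I would avoid this issue by working with the squares projected into $J_0$, as in the proof of the lemma on the action of $\hat S$ on $J$. The projection of $t_i^2$ to $J_0$ is nonzero, because $(t_i^2,b)\ne0$ with $b\in J_0$. Since $\psi$ fixes $J_0$ pointwise and preserves the decomposition $J=J_0\oplus B$, it fixes this projection, while $(t_i^\psi)^2=\mu_i^2t_i^2$. This forces $\mu_i^2=1$ without any appeal to invariance of the form.
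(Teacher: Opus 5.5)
Your proof is correct and follows the paper's argument. It uses the same ingredients: scalars on $T_1,T_2$ via Computation~\ref{kern_U}, $\mu_i=\pm1$ via the pairing $(t_i^2,b)$ from Computation~\ref{extensions of rho}(b), the upper bound $4$ from $\lla T\rra=U$, and the lower bound from $\tau_v=z_3|_U$ and $\tau_w=z_1|_U$. Your sign assignment $z_3\mapsto(1,-1)$, $z_1\mapsto(-1,1)$ is the correct one; the paper's text swaps $z_1$ and $z_3$, which is harmless.

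Your closing variant is a genuine small improvement. The paper uses invariance of the Frobenius form under $\psi$ without comment, even though uniqueness of the form on $A$ is not established. You replace this with the $\psi$-equivariant projection of $t_i^2$ to $J_0$. That projection is nonzero because $J_0$ is orthogonal both to $B$ and to the other $\ad_e$-eigenspaces of $U$.
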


\begin{proof}
Let $\psi\in\hat P$. By Computation \ref{kern_U}, we have $\psi|_{T_i}=\lm_i{\rm id}_{T_i}$ for some $\lm_i\in\F$, $i=1,2$. By Computation \ref{extensions of rho} (b), we can pick $t_i\in T_i$ such 
that $(t_i^2,b)\neq 0$ for some $b\in J$. Then 
$$
(t_i^2,b)=((t_i^2)^\psi,b^\psi)=((t_i^\psi)^2,b^\psi)=((\lm_i t_i)^2,b)
=\lm_i^2(t_i^2,b).
$$
Thus, $\lm_i^2=1$ and so $\lm_i=\pm 1$. It follows that  
$(\lm_1,\lm_2)$ is in 
$$\{(1,1),(1,-1),(-1,1),(-1,-1)\},$$
and so $\hat P$ induces on $U$ the group of order at most $4$, since $\la 
T\ra=U$. We note that the cases $(\lm_1,\lm_2)=(1,-1)$ and $(-1,1)$ correspond 
to the restrictions to $U$ of $\tau_{z_1}$ and $\tau_{z_3}$. Hence $P$ induces 
on $U$ the group of order exactly $4$, and consequently, so also does $\hat 
P$.   
\end{proof}

Note that $z_1$ can be substituted here with $z_2$ and, symmetrically, $z_3$ 
can be substituted with $z_4$.

\medskip
We next concentrate on $K:=A_{\frac{1}{32}}(a)$, of dimension $132$. We begin 
by determining the automorphisms from $\hat P$ acting as identity 
on $U$. 

\begin{computation}
The following hold:
\begin{enumerate}
\item $\lla K\rra=A$;
\item the identity automorphism $\rm{id}_U$ extends to a $1$-dimensional space of 
maps on $K$;
\item there exist $k\in K$ and $u_0\in U$ such that $(k^2,u_0)\neq 0$.
\end{enumerate}  
\end{computation}

From this we deduce the following. 

\begin{lemma} \label{kernel on U}
The kernel of $\hat P$ acting on $U$ coincides with $\la z\ra$.  
\end{lemma}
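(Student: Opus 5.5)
We follow the same pattern as Lemmas \ref{kernel N76} and \ref{U kernel}. Let $\psi\in\hat P$ act as the identity on $U$. The first step is to note that $\psi$ fixes $a$, since $\psi\in\hat S$ fixes $z$. Hence $\psi$ preserves every eigenspace of $\ad_a$, and in particular it preserves $K=A_{\frac{1}{32}}(a)$. Because $U$ is a subalgebra and $K$ is a $U$-module, the restriction $\psi|_K$ is an extension of $\mathrm{id}_U$ to $K$. By part (b) of the last computation, the space of such extensions is $1$-dimensional and clearly contains $\mathrm{id}_K$. Therefore $\psi|_K=\lm\,\mathrm{id}_K$ for some scalar $\lm\in\F$.

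The second step pins down $\lm$ using the Frobenius form, exactly as in the earlier kernel lemmas. Take $k\in K$ and $u_0\in U$ with $(k^2,u_0)\neq 0$, as provided by part (c). We use that $\psi$ preserves the Frobenius form; if needed, we justify this by noting that $\psi$ fixes the axes of $E$ and the form is normalised by axis length, though the argument in the previous lemmas uses it in the same way. This gives
$$(k^2,u_0)=((k^\psi)^2,u_0^\psi)=\lm^2(k^2,u_0),$$
so $\lm^2=1$ and $\lm=\pm1$.

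For the third step, part (a) says $\lla K\rra=A$, so $\psi$ is determined by its action on $K$. Thus the kernel has order at most $2$. On the other hand, the Miyamoto involution $\tau_a=z$ acts as $1$ on $A_1(a)\oplus A_0(a)\supseteq U$ and as $-1$ on $K$, so $z$ lies in the kernel. This realises $\lm=-1$ and gives the kernel exactly $\la z\ra$.

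The only subtle point is that the elements of $\hat P$ preserve the Frobenius form, since uniqueness of the form on $A$ is not known. If this cannot be cited directly, we would avoid the form altogether. Because $k^2\in A_{\{1,0,\frac14\}}(a)$ and its $U$-component is nonzero (equivalent to $(k^2,u_0)\neq0$ for the $\ad_a$-orthogonal decomposition), we can compare projections to $U$ instead. Indeed $\psi$ fixes $U$ pointwise and commutes with the projection onto $U=A_0(a)$, so the projection of $(\lm k)^2$ equals $\lm^2$ times the projection of $k^2$, which again forces $\lm^2=1$.
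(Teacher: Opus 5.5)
Your proof is correct and is essentially the paper's argument: part (b) makes $\psi|_K$ a scalar $\lm$, and the nonvanishing of $(k^2,u_0)$ forces $\lm^2=1$; then $\lla K\rra=A$ bounds the kernel by order $2$, and $z=\tau_a$ realises $\lm=-1$. Your closing variant is a good addition. Comparing the $U$-components of $k^2$ and $(\lm k)^2$ instead of using the form makes the step independent of whether $\hat P$ preserves the Frobenius form on $A$; your other justification, that $\psi$ fixes the axes of $E$, would not suffice on its own. It is also the same device the paper uses for $B$ over $J_0$.
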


\begin{proof}
Part (b) of the preceding computation shows that an automorphism $\phi\in\hat P$ which 
restricts to $U$ as the identity, acts as a scalar $\lm\in\F$ on $K$. Taking $k$ and $u_0$ as in 
part (c), we get that $\lm^2(k^2,u_0)=((\lm k)^2,u_0)=((k^\phi)^2,u_0^\phi)=((k^2)^\phi,u_0^\phi)
=(k^2,u_0)\neq 0$. This implies $\lm^2=1$, that is, $\lm=\pm 1$. Since $A=\lla K\rra$ by part (a), 
the action of $\phi$ on $A$ is fully determined by its action on $K$. If $\lm=1$ then, clearly, $\phi$ 
is the identity automorphism. If $\lm=-1$ then $\phi=\tau_a=z$. Hence the kernel of $\hat P$ acting 
on $U$ is $\la z\ra$, as claimed. 
\end{proof}

Finally, we can identify our groups $\hat P$ and $\hat S$.

\begin{corollary}
We have that $\hat P=P$ and $\hat S=S$.
\end{corollary}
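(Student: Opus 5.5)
The plan is to count: bound $|\hat P|$ from above by the product of the order of its image on $U$ and the order of the kernel of its action on $U$, and then compare with $|P|$. Restriction to $U=A_0(a)$ gives a homomorphism $\hat P\to \Aut(U)$. By Lemma \ref{action on U}, its image has order $4$ and coincides with the image of $P$. By Lemma \ref{kernel on U}, its kernel is $\la z\ra$, of order $2$. Hence $|\hat P|=4\cdot 2=8=|P|$. Since $P\le\hat P$ (indeed $P$ normalises itself and each of $\la z_1,z_2\ra$, $\la z_3,z_4\ra$, these being normal in $P$), we conclude $\hat P=P$.

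For $\hat S$, I would use the factorisation $\hat S=S\hat P$ established earlier. It holds because $\hat P$ is the kernel of the action of $\hat S$ on the two-element set $\{\la z_1,z_2\ra,\la z_3,z_4\ra\}$, so $|\hat S:\hat P|\le 2$, and elements of $S\setminus P$ swap the two Klein subgroups. Substituting $\hat P=P\le S$ gives $\hat S=SP=S$.

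The only point needing care is that $z\in P$ does lie in $\hat P$ and restricts trivially to $U$. This holds because $z=\tau_a$ acts as the identity on $A_0(a)=U$, and $z$ is central in $P$, so it normalises both Klein subgroups. This is consistent with Lemma \ref{kernel on U}, and no step here is a real obstacle; all the heavy lifting was done in the computations preceding the two lemmas.
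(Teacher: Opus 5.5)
Your proof is correct and follows the paper's argument: $|\hat P|=4\cdot 2=8=|P|$ from the two preceding lemmas on the image and the kernel of $\hat P$ acting on $U$. Then $\hat S=S$ follows because $\hat P$ has index $2$ in $\hat S$; your factorisation $\hat S=S\hat P$ is an equivalent way of saying this.
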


\begin{proof}
By Lemmas \ref{action on U} and \ref{kernel on U}, $|\hat P|=8=|P|$, which means that $\hat P=P$. 
Since $\hat P$ has index $2$ in $\hat S$, we also have that $|\hat S|=16=|S|$, so we also have that 
$\hat S=S$.
\end{proof}

We will also need the following.

\begin{corollary}
The subgroup $P$ is self-centralised in $\hat G=\Aut(A)$, i.e., $C_{\hat G}(P)=Z(P)=\la z\ra$. 
Furthermore, $S$ is self-normalised in $\hat G$, and so $S$ is a Sylow $2$-subgroup of $\hat G$.
\end{corollary}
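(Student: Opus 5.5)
Both claims follow from the equalities $\hat P=P$ and $\hat S=S$ of the preceding corollary, together with elementary properties of the groups involved.

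For the first claim, I show that $C_{\hat G}(P)$ normalises $P$ and preserves the two Klein four-subgroups $\la z_1,z_2\ra$ and $\la z_3,z_4\ra$. Indeed, any $\psi\in C_{\hat G}(P)$ commutes with every element of $P$, so $\psi\in N_{\hat G}(P)=\hat S$. Moreover $\psi$ fixes each $z_i$, hence normalises both elementary abelian subgroups of order $4$, so $\psi\in\hat P$. By the corollary $\hat P=P$, so $C_{\hat G}(P)=C_P(P)=Z(P)$. Since $P\cong D_8$, we have $Z(P)=\la z\ra$.

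For the second claim, take $\psi\in N_{\hat G}(S)$. I argue that $\psi$ normalises $P$, which needs a characteristic description of $P$ inside $S$. The paper defines $P$ as the subgroup generated by the five involutions $\{z\}\cup C$. These are all the involutions of the semidihedral group $S$ of order $16$: $SD_{16}$ has exactly $5$ involutions, the central one and one class of size $4$. So $P$ is the subgroup of $S$ generated by all its involutions, and it is therefore characteristic in $S$. Consequently $\psi$ normalises $P$, so $\psi\in\hat S=S$. This gives $N_{\hat G}(S)=S$.

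To deduce that $S$ is Sylow, I use the standard argument. Let $\hat S_2$ be a Sylow $2$-subgroup of $\hat G$ containing $S$; it is finite, since $\hat G$ is finite by \cite[Corollary 3.4]{aut}. If $S<\hat S_2$, then normalisers grow in $p$-groups, so $S<N_{\hat S_2}(S)\le N_{\hat G}(S)=S$, which is a contradiction. Hence $S=\hat S_2$.

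The only point needing care is the claim that $P$ is characteristic in $S$. I would check it by listing the involutions of $SD_{16}=\la r,s\mid r^8=s^2=1,\ srs=r^3\ra$: they are $r^4$ and $sr^{2k}$, $k=0,1,2,3$, while the elements $sr^{\text{odd}}$ have order $4$. Together with the paper's statement that $\{z\}\cup C$ consists of all involutions of $S$, this confirms that $P=\la\text{involutions of }S\ra$. The rest of the argument is formal.
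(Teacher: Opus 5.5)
Your proof is correct. Your second part is exactly the paper's argument: $P$ is characteristic in $S$ as the subgroup generated by all involutions, so $N_{\hat G}(S)\le N_{\hat G}(P)=\hat S=S$, and then normalisers grow in finite $2$-groups. Your check of the five involutions of $SD_{16}$ is a sound confirmation of the characteristic-subgroup claim.

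Your first part takes a shorter, purely group-theoretic route. You note that $\psi\in C_{\hat G}(P)$ lies in $N_{\hat G}(P)=\hat S$ and fixes each $z_i$, so it normalises both $\la z_1,z_2\ra$ and $\la z_3,z_4\ra$. Hence $\psi\in\hat P$ directly by the definition of $\hat P$.

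The paper instead argues through the algebra. Using the bijection between axes and Miyamoto involutions, a centralising $\phi$ fixes $a,a_1,\dots,a_4$. It therefore acts trivially on the subalgebra $E$, so it fixes $v=e-\one_V$ and $w=e-\one_W$. Then $\phi\in\hat P$, because elements of $\hat S\setminus\hat P$ swap $v$ and $w$. This detour gives the extra information that centralising automorphisms act trivially on $E$, but the claim does not need it.

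Your version needs only the definition of $\hat P$ and the equality $\hat P=P$. You also make explicit the final step $C_{\hat G}(P)=C_P(P)=Z(P)=\la z\ra$, which the paper leaves implicit after concluding $\phi\in P$.
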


\begin{proof}
If $\phi$ centralises $P$ then it fixes the axes $a$ and $a_i$, $i=1,2,3,4$, since it centralises the 
corresponding Miyamoto involutions and we have a bijection between axes and their Miyamoto involutions. 
In particular, $\phi$ acts trivially on the entire subalgebra $E$. Therefore, $\phi$ fixes $v$ and $w$, 
which means that $\phi\in\hat P=P$.

For the second claim, $N_{\hat G}(S)$ normalises $P$, since $P$ is characteristic in $S$. (Indeed, $P$ 
is generated by all the involutions from $S$.) Hence $N_{\hat G}(S)\leq N_{\hat G}(P)=\hat S=S$. 
Recall that $\hat G=\Aut(A)$ is a finite group by \cite{aut}. If $S$ were a proper subgroup of some 
Sylow $2$-subgroup $T$ of $\hat G$ then we would have that $N_T(S)>S$, which is not the case by the above. 
So $S$ is indeed a Sylow $2$-subgroup of $\hat G$.
\end{proof}

We are now in a position to establish the main result of this section using group-theoretic arguments. 
We begin by showing that $\hat{G}=\Aut(A)$ has no abelian normal subgroups. Suppose to the contrary, that 
$Q$ is an abelian minimal normal subgroup of $\hat{G}$. Then it must be an elementary abelian $p$-subgroup 
for some prime $p$. Consequently, $Q$ can be viewed as a vector space over $\F_p$, and a $\hat{G}$-module. 
Let $R$ be a minimal nontrivial subgroup of $Q$ invariant under $G$. Then $R$ is an irreducible $G$-module. 

\begin{lemma}\label{action_P0_nontrivial}
We have that $C_R(P)=1$.
\end{lemma}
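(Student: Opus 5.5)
We argue by contradiction, following the pattern of Lemma \ref{cent_E_in_R}. Suppose $C_R(P)\neq 1$, and pick a nontrivial element $r\in C_R(P)$. The key input is the preceding corollary, which says $C_{\hat G}(P)=Z(P)=\la z\ra$. Since $r$ centralises $P$, this forces $r\in\la z\ra$. Hence $r=z$, and in particular $z\in R\leq Q$.

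If $p$ is odd, this is already a contradiction, since $z$ has order $2$ while $R$ is a $p$-group. So assume $p=2$. Then $z\in Q\cap G$. Because $Q\unlhd\hat G$, the intersection $Q\cap G$ is a nontrivial normal subgroup of $G\cong M_{11}$. As $M_{11}$ is simple, $G\leq Q$. But $Q$ is abelian and $G$ is not, which is a contradiction. Therefore $C_R(P)=1$.

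Two points need checking. First, $r$ lies in $R\leq Q\leq\hat G$ and commutes with every element of $P$, so it genuinely lies in $C_{\hat G}(P)$; this is immediate. Second, the self-centralising property of $P$ in the full group $\hat G$, rather than only in $G$, is the substantive ingredient, and it was established computationally in the preceding corollary via $\hat P=P$. With that corollary available the argument has no real obstacle. The only delicate point is making sure we invoke the centraliser in $\hat G$, not just in $G$.
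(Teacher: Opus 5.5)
Your proof is correct and matches the paper's argument: self-centralisation of $P$ in $\hat G$ forces a nontrivial element of $C_R(P)$ into $P\cap R\leq G\cap Q$, and then simplicity of $G\cong M_{11}$ contradicts $Q$ being abelian. Splitting off odd $p$ is harmless but unnecessary, since the $G\cap Q\neq 1$ argument works for every $p$.
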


\begin{proof}
If $C_R(P)\neq 1$ then $P\cap R\neq 1$, since $P$ is self-centralised. Since $R\leq Q$ and $R\leq G$, 
we conclude that $G\cap Q\neq 1$, which is a contradiction, since $G\cong M_{11}$ is non-abelian simple.
\end{proof}

In particular, this statement implies the following.

\begin{corollary}
The prime $p$ is not $2$.
\end{corollary}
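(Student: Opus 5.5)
The plan is to derive this directly from Lemma \ref{action_P0_nontrivial}, exactly as for $E$ in the $L_2(11)$ case. Assume for a contradiction that $p=2$. Then $R$ is a nontrivial elementary abelian $2$-group, that is, a nonzero vector space over $\F_2$. Since $R$ is a $G$-submodule, it is in particular invariant under the subgroup $P\le G$, so $P$ acts on $R$ by conjugation.

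The key step is the standard fixed-point fact for $p$-groups acting on $p$-groups. Consider the semidirect product $RP$, which is a finite $2$-group because both $R$ and $P\cong D_8$ are $2$-groups. The subgroup $R$ is normal and nontrivial in $RP$, and a nontrivial normal subgroup of a $p$-group meets the centre nontrivially. Hence $R\cap Z(RP)\neq 1$, and every element of $R\cap Z(RP)$ is centralised by $P$, so $C_R(P)\neq 1$.

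One can also see this by counting orbits. The $P$-orbits on $R\setminus\{1\}$ have $2$-power lengths, while $|R\setminus\{1\}|=2^n-1$ is odd. So some orbit has length $1$, which again gives a nontrivial fixed point.

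Either way we get $C_R(P)\neq 1$, contradicting Lemma \ref{action_P0_nontrivial}. Hence $p\neq 2$. The only point needing care is that $R$ is $P$-invariant. This holds because $R$ was chosen $G$-invariant and $P\le S\le G$, so no real obstacle arises.
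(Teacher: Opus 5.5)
Your proof is correct and takes the same approach as the paper: assuming $p=2$, the paper simply observes that $C_R(P)$ cannot be trivial because both $R$ and $P$ are $2$-groups, which contradicts Lemma \ref{action_P0_nontrivial}. You also spell out the standard fixed-point fact behind this (via $R\cap Z(RP)\neq 1$, or via counting $P$-orbits on $R\setminus\{1\}$ modulo $2$), which the paper leaves implicit.
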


\begin{proof}
Suppose that $p=2$. Then $C_R(P)$ cannot be trivial as both $R$ and $P$ are $2$-groups. 
\end{proof}

We conclude that $p$ is an odd prime. We will now utilise the fact that Miyamoto involutions form a class 
of $6$-transpositions, and this will allow us to restrict the value of $p$ even further. 

\begin{lemma}
We have that $p=3$ or $5$.
\end{lemma}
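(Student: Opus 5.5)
We argue as in the analogous lemma for $A_{101}$, using the $6$-transposition property of Miyamoto involutions together with Lemma \ref{action_P0_nontrivial}. We already know that $p$ is odd, so it suffices to rule out $p\geq 7$. Suppose, towards a contradiction, that $p\geq 7$.

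Take any involution $t\in P$. Every involution of $P$ is a Miyamoto involution: $z=\tau_a$ and $z_i=\tau_{a_i}$. Hence $t$ lies in the class of $6$-transpositions consisting of the Miyamoto involutions $\tau_b$, $b\in X$, and this class is closed under conjugation by $\hat G$.

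Suppose $t$ inverts some nontrivial $r\in R$. Then $r$ has order $p$ and $r^{-1}tr=t r^2$ is again a Miyamoto involution, being conjugate to $t$. The product $t\cdot t^r=r^{2}$ also has order $p$, since $p$ is odd. The $6$-transposition property then forces $p\leq 6$, contradicting $p\geq 7$. (This is the standard fact that an involution from a class of $k$-transpositions cannot invert an element of prime order greater than $k$.)

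Consequently, every involution of $P$ centralises every element it could invert, so it acts on $R$ without eigenvalue $-1$. More precisely, since $p$ is odd, the coprime action gives $R=C_R(t)\times[R,t]$, and $t$ inverts every element of $[R,t]$. Therefore $[R,t]=1$, so $t$ acts trivially on $R$. The group $P\cong D_8$ is generated by its involutions (for instance by $z_1$ and $z_3$), so $P$ centralises $R$. Thus $C_R(P)=R\neq 1$, contradicting Lemma \ref{action_P0_nontrivial}.

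It follows that $p\in\{3,5\}$. The only subtle point in this argument is that the conjugates of $t$ by elements of $R\leq\hat G$ are again Miyamoto involutions. This holds because $\hat G=\Aut(A)$ permutes the axes of $A$ and $\tau_{b^\phi}=\tau_b^\phi$, so the set of all Miyamoto involutions of axes of $A$ (not only those of $X$) is $\hat G$-invariant. The $6$-transposition property applies to any two such involutions, since the two corresponding axes generate a Norton--Sakuma algebra.
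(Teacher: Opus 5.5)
Your proof is correct and follows essentially the paper's route: a Miyamoto involution $t$ inverting some $1\neq r\in R$ gives $t\,t^r=r^2$ of order $p$, and the $6$-transposition property forces $p\le 6$. The paper applies this only to $z$, which acts nontrivially on $R$ because $G$ is simple and acts nontrivially by Lemma~\ref{action_P0_nontrivial}, whereas you apply it to every involution of $P$ and contradict $C_R(P)=1$ directly. One correction: the $\hat G$-invariant set is the set of Miyamoto involutions of all axes of $A$, as your last paragraph says, not only those of $X$, as your second paragraph first states.
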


\begin{proof}
By Lemma \ref{action_P0_nontrivial}, $G$ acts on $R$ nontrivially, and hence $z$ acts on $R$ nontrivially. 
Because $p\ne 2$, there must be an element $1\ne r\in R$ that is inverted by $z$. Then we calculate that 
$|zz^r|=p\leq 6$, since $z$ belongs to a $6$-transposition class. 
\end{proof}

We will now rule out the cases $p=3$ and $p=5$ in turn using the known irreducible modules of $M_{11}$ over 
the respective fields $\mathbb{F}_p$. Our strategy is to form the semidirect product $V\rtimes G$ and check 
whether it is a $6$-transposition group. If not, then we can be sure that $R$ is not isomorphic to $V$ as 
a $G$-module. Indeed, $RG\cong V\rtimes G$, and $RG$ is clearly a $6$-transposition group. If $V\rtimes G$ 
happens to be a $6$-transposition group then we use additional arguments.

We first note the following observation which applies for both possible values of $p$.

\begin{lemma}
Let $p=3$ or $5$, $n>1$, and $V$ be an $n$-dimensional irreducible $G$-module over $\mathbb{F}_p$. Then
$V\rtimes G\cong p^n:M_{11}$ has a unique class of involutions. 
\end{lemma}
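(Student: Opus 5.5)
The plan is to describe the involutions of $V\rtimes G$ directly and then show they are all conjugate. Since $p$ is odd, $V$ has odd order, so every involution of $V\rtimes G$ maps onto an involution of $G\cong M_{11}$ under the natural projection $V\rtimes G\to G$. The group $M_{11}$ has a single class of involutions, with representative $z$. So, up to conjugacy in $V\rtimes G$, every involution lies in the coset $Vz$, and the task reduces to showing that the involutions in $Vz$ form one orbit under conjugation by $V\rtimes G$ (in fact by $V C_G(z)$).

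For the key step, I would write an element of $Vz$ as $vz$ with $v\in V$. Then $(vz)^2=v\,v^{z}$, so $vz$ is an involution exactly when $v^z=v^{-1}$, that is, when $v$ lies in the $(-1)$-eigenspace $V_-$ of $z$ acting on $V$. Since $p\neq 2$, we have the decomposition $V=V_+\oplus V_-$ into the $\pm1$-eigenspaces of $z$. For $u\in V$, conjugation gives
$$u^{-1}(vz)u=u^{-1}v\,u^{z^{-1}}z=v\,u^{-1}u^{z}\,z,$$
written additively as $v+(z-1)u$. As $u$ ranges over $V$, the element $(z-1)u$ ranges over all of $V_-$, because $z-1$ acts on $V_-$ as multiplication by $-2$, which is invertible when $p$ is odd. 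Hence every involution $vz$ with $v\in V_-$ is $V$-conjugate to $z$ itself. This is the standard Frattini/coprime argument: $H^1(\langle z\rangle,V)=0$ since $|z|$ is coprime to $p$, so all complements $\langle vz\rangle$ to $V$ in $V\langle z\rangle$ are conjugate.

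Combining the two steps, any involution of $V\rtimes G$ is conjugate to some $vz$ with $v\in V_-$, and each such element is conjugate to $z$. Therefore $V\rtimes G$ has exactly one class of involutions, namely the class of $z$.

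There is no real obstacle here. The one point needing care is that $M_{11}$ has a unique involution class, which is a known fact. The hypothesis $n>1$ plays no role beyond excluding the trivial module; the argument works for any $G$-module of odd characteristic.
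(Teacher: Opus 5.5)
Your proof is correct, but it follows a different route from the paper's. The paper's argument is one line via Sylow's theorem: $|V|$ is odd, so $G$ contains a Sylow $2$-subgroup of $V\rtimes G$. Hence every involution of $V\rtimes G$ is conjugate into $G$, and $G\cong M_{11}$ has a single class of involutions. You instead project onto $G$ to reduce to the coset $Vz$, characterise the involutions there as $vz$ with $v\in V_-$, and then use the coprime-action computation $vz\mapsto v+(z-1)u$ (vanishing of $H^1(\langle z\rangle,V)$) to conjugate each of them to $z$ by an element of $V$.

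The Sylow argument is shorter. It also does not use that $V$ is abelian; any normal subgroup of odd order with complement $G$ would do. Your computation yields more information. It identifies the class explicitly as the set of all $vz'$ with $z'$ an involution of $G$ and $v\in V_-(z')$, so the class has size $165\cdot p^{\dim V_-(z)}$. This is exactly the count $3^2\times 165=1485$ that the paper uses, without derivation, for the $5$-dimensional $\mathbb{F}_3$-modules in the following lemma.

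Your remark that $n>1$ is inessential is also right. The conclusion holds for the trivial module as well.
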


This is because $G$ contains a Sylow $2$-subgroup of $V\rtimes G$ and $G$ has a single class of involutions. 

\begin{lemma} \label{not 3}
We have that $p\ne 3$.
\end{lemma}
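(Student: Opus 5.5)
We must show that $R$, an irreducible $\F_3G$-module with $C_R(P)=1$, cannot occur for $G=M_{11}$. The argument has three steps: enumerate the candidate modules, cut the list down with the eigenvalue count on $P$, and test what survives for the $6$-transposition property.

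\textbf{Candidate modules.} The irreducible $3$-modular representations of $M_{11}$ have dimensions $1,5,5,10,10,10,24,45$. These are the two $5$-dimensional duals, three modules of dimension $10$ (among them $10$ and a dual pair $10',\overline{10'}$), $24$, and $45$. The trivial module is excluded by the previous lemma, since $C_R(P)=1$ forces $G$ to act nontrivially.

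\textbf{Eliminating with the action of $P$.} Since $P\cong D_8$ and $C_R(P)=1$, the restriction $R|_P$ contains no trivial constituent. Working in characteristic $3$, coprime to $|P|$, $R|_P$ is semisimple and determined by its Brauer character. I will compute the multiplicity of the trivial character of $P$ in $R|_P$:
$$\tfrac18\bigl(\chi(1)+\chi(z)+4\chi(z_1)+2\chi(g_4)\bigr)
=\tfrac18\bigl(\chi(1)+5\chi(2A)+2\chi(4A)\bigr),$$
using the Brauer character values on the classes $2A$ and $4A$ of $M_{11}$. Here all five involutions of $P$ lie in $2A$, and the two elements of order $4$ in $P$ lie in $4A$. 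Any module where this multiplicity is positive is excluded.

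The values are:
\begin{itemize}
\item for $\chi$ of degree $5$: $\chi(2A)=1$, $\chi(4A)=-1$, so the multiplicity is $(5+5-2)/8=1$;
\item for degree $10$: the values $(2,2)$ give $(10+10+4)/8=3$, and $(-2,0)$ gives $0$;
\item for degree $24$: the values $(0,0)$ give $3$;
\item for degree $45$: the values $(-3,1)$ give $(45-15+2)/8=4$.
\end{itemize}
I will double-check these against the GAP Brauer table. This step should leave only the pair $10',\overline{10'}$, whose restriction to $P$ is fixed-point-free.

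\textbf{The surviving pair.} For $V=10'$, I will form $V\rtimes G=3^{10}{:}M_{11}$ and test the $6$-transposition condition directly. By the preceding lemma the involutions form a single class $t^{V\rtimes G}$. It suffices to show there exist $v\in V$ and $g\in G$ with $|t\cdot(t^{g})^{v}|>6$. Products of two such involutions have the form $(\text{element of }V)\cdot t t^g$. Taking $tt^g$ of order $5$ (class $5A$) and a nonzero $v$ with $[v,\,tt^g]$ suitably nontrivial, one gets a product of order $15$. More concretely, I will search in MAGMA over pairs $t,t^{g}$ and over $v$ in the $(-1)$-eigenspace of $t$ for an element of order $15$ or $12$.

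The main obstacle is this last case. If $3^{10}{:}M_{11}$ happened to be a $6$-transposition group, I would need the sharper argument from the $A_6$ section. That argument says two Miyamoto involutions whose product has order $5$ cannot both invert the same nontrivial element of $R$, since this would produce an element of order $15$. I would apply it to $t,t'$ with $|tt'|=5$: check that the $(-1)$-eigenspaces of $t$ and $t'$ on $10'$, each of dimension $6$ since $\chi(2A)=-2$, must intersect nontrivially in the $10$-dimensional space. That gives the contradiction, because $6+6>10$.
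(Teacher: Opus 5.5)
Your proof is correct, but it takes a different route from the paper.

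\textbf{The paper's route.} The paper does not use $C_R(P)=1$ here. It checks in MAGMA that $V\rtimes G$ is not a $6$-transposition group for the modules of dimension $10$, $24$ and $45$. For the two $5$-dimensional modules the $1485$ involutions do form a $6$-transposition class, so the paper enumerates the possible shapes on $1485$ putative axes and shows the Gram matrices have rank far above $286$.

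\textbf{Your route, first step.} You use $C_R(P)=1$ quantitatively. The Brauer character values you quote on $2A,4A$ are correct: $(1,-1)$, $(2,2)$, $(-2,0)$, $(0,0)$, $(-3,1)$. For degree $5$, integrality of $(10+2\chi(4A))/8$ alone forces a positive multiplicity. So $P$ has nonzero fixed points on $5$, $5^*$, the real $10$, $24$ and $45$. This removes all of them by hand, including the paper's hardest case, with no computation.

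\textbf{Your route, second step.} For the surviving dual pair with $\chi(2A)=-2$, what you present as a fallback is already a complete argument, so the MAGMA search is unnecessary. Take $t,t'\in G$ with $|tt'|=5$. Their $(-1)$-eigenspaces are $6$-dimensional in a $10$-dimensional space, so they share some $r\neq 1$. Then $tt'$ centralises $r$, and $(tr)t'=tt'r^{\pm1}$ has order $15$. Since $p$ is odd, $tr$ is conjugate to $t$ by an element of $\langle r\rangle$, so this is the order-$15$ trick from the $A_6$ section.

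\textbf{A correction.} Your earlier phrasing, ``$[v,tt^g]$ suitably nontrivial'', states the wrong condition. What is needed is that $v$ has a nonzero component in $C_V(tt^g)$, and the eigenspace intersection supplies exactly that.

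\textbf{Trade-off.} Your argument depends on the specific Brauer character values at $2A$ and $4A$. The paper's approach is uniform and computational, and its Gram-matrix test is the tool needed when neither a fixed-point obstruction nor a failure of the $6$-transposition property is available. In this case your route makes that machinery unnecessary.
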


\begin{proof}
The $3$-modular Brauer character table for $G$ is available in MAGMA \cite{MAGMA}. The nontrivial modules 
$V$ are of dimensions $5$ (two such), $10$ (three such), $24$ and $45$. They all can be constructed within 
MAGMA, as can also be the semidirect products $V\rtimes G$. First, let $\dim V=5$. For both irreducible 
modules of this dimension, the unique class of $3^2\times 165=1485$ involutions is a $6$-transposition class 
for $V\rtimes G$. Assuming that $RG\cong V\rtimes G$, this gives us a set $\hat X$ of $1485$ axes in $A$.
Furthermore, the possible shapes on $\hat X$ are quite limited, as the shape on the original set
$X\subset\hat X$ of the $165$ axes corresponding to $G=M_{11}$ is known. Furthermore, once the
shape is known, we can write the Gram matrix of the set $\hat X$ with respect to the Frobenius form.
This is because the value of $(c,d)$ for axes $c$ and $d$ is defined by the type of the $2$-generated
algebra $\lla c,d\rra$ recorded in the shape.

As it turns out, for one of the $5$-dimensional modules $V$ and three of the four possible shapes,
the Gram matrix has full rank $1485$, while the Gram matrix for the fourth shape has rank $1430$.
This is clearly a contradiction, as $\dim A=286$. For the other $5$-dimensional module $V$,
we have four possible shapes and the rank of the Gram matrix in the corresponding cases
is $1265$ and $1386$, and two cases have full rank $1485$, still a contradiction.

For all other irreducible modules $V$, the group $V\rtimes G$ is not a $6$-transposition group, which rules them out. Thus, $p\neq 3$.
\end{proof}

\begin{lemma} \label {not 5}
We have $p\ne 5$. 
\end{lemma}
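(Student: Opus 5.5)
We follow the strategy used for $p=3$ in Lemma \ref{not 3}. The irreducible $5$-modular representations of $G=M_{11}$ have nontrivial degrees $10$ (three modules, one real and a pair of complex conjugates), $11$, $16$ (two), $45$ and $55$. Over $\F_5$ some of these are not realisable and instead combine into irreducible $\F_5G$-modules of doubled dimension, e.g.\ the conjugate pair of degree $10$ may give a $20$-dimensional $\F_5$-module. So we first determine, in MAGMA, the irreducible $\F_5G$-modules $V$ of dimension $n>1$. The trivial module is excluded by Lemma \ref{action_P0_nontrivial}, since $P$ would centralise it.

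For each such $V$, we build the semidirect product $V\rtimes G\cong 5^n{:}M_{11}$. By the preceding lemma it has a unique class of involutions, of size $5^{n-\dim C_V(z)}\cdot 165$. We then test whether this class is a $6$-transposition class, i.e.\ whether all products of two involutions have order at most $6$. A cheap preliminary filter is available. If two Miyamoto involutions $t,s\in G$ with $|ts|=3$ both invert a common vector $r\in V$, then $tsr$-type products yield elements of order $15$, which is impossible. So $V$ must avoid such simultaneous inversion, and this should already kill large modules such as $45$ and $55$. For the remaining ones, we run the direct check of orders of $zz^{g}$ for $g\in V\rtimes G$, using representatives $z\cdot z^{vh}$ with $v\in V$, $h$ running over double coset representatives of $C_G(z)$.

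If some $V$ survives the $6$-transposition test, we argue as in Lemma \ref{not 3}. Assuming $RG\cong V\rtimes G$, the involution class gives a set $\hat X\supseteq X$ of axes in $A$. Since the shape on $X$ is known to be $\bullet$, we enumerate the finitely many shapes on $\hat X$ compatible with it, using Table \ref{inclusions} to propagate the types. For each shape we form the Gram matrix via the Norton--Sakuma values of Table \ref{Norton-Sakuma}, and show that its rank exceeds $286=\dim A$, a contradiction.

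The main obstacle is expected to be this last step in case a large module, say of dimension $10$ or $11$, survives. Then $|\hat X|=5^k\cdot 165$ is far too big to handle with a full Gram matrix. In that situation we would instead restrict to a manageable $G$-invariant sub-collection, for instance the axes of $\hat X$ lying in a coset-union $\{z^{vg}\}$ for $v$ in a small $C_G(z)$-invariant subspace of $V$. It suffices to exhibit a subset whose Gram matrix already has rank greater than $286$.
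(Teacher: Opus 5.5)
Your plan is the paper's proof: list the nontrivial irreducible $\F_5M_{11}$-modules (dimensions $10$, $11$, $16$, $16$, $20$, $45$, $55$, matching your prediction that the $\sqrt{-2}$ pair of degree $10$ fuses into a $20$-dimensional module) and check that $V\rtimes G\cong 5^n{:}M_{11}$ is never a $6$-transposition group. The paper reports that every one of these semidirect products fails this test, so your Gram-matrix fallback is never needed; your common-inverted-vector filter is a valid shortcut for the same check.
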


\begin{proof}
The nontrivial irreducible modules for $G$ in characteristic $5$ have dimensions $10$, $11$, $16$ (two such), $20$, $45$ and $55$. For all these modules,
$\tilde{P}= V\rtimes G\cong 5^n: M_{11}$ is not a $6$-transposition group, a contradiction. This rules out $p=5$. 
\end{proof}

The details of the computations from Lemmas \ref{not 3} and \ref{not 5} can 
be found on GitHub \cite{m11 aut}. 

\medskip
We conclude that every minimal normal subgroup $Q$ of $\hat{G}$ is non-abelian, and hence is a direct product of a finite number of copies of a non-abelian
simple group $L$, i.e., $Q\cong L\times L\ldots \times L$. We aim to show that $Q\cong L$ is simple. 

\begin{lemma}\label{min_normal_contains_G}
Every minimal normal subgroup $Q$ of $\hat{G}$ is a non-abelian simple group. Furthermore, $G\leq Q$.
\end{lemma}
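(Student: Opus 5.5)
We will use the facts established so far: $S$ (semidihedral of order $16$) is a Sylow $2$-subgroup of $\hat G$; $P$ is self-centralised in $\hat G$; $\hat G$ has no abelian minimal normal subgroups, so $Q\cong L_1\times\cdots\times L_k$ with the $L_i\cong L$ non-abelian simple.

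The first step is to show $G\cap Q\neq 1$, and hence $G\le Q$ because $G\cong M_{11}$ is simple. Since $Q\unlhd\hat G$, the intersection $S\cap Q$ is a Sylow $2$-subgroup of $Q$. Each $L_i$ has even order by Feit--Thompson, so $S\cap Q\neq 1$. As $S\le G$, this gives $G\cap Q\ne1$. Normality of $G\cap Q$ in $G$ and simplicity of $G$ then force $G\le Q$.

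The second step is to show $k=1$. Now $S\le G\le Q$, so $S$ is a Sylow $2$-subgroup of $Q$ and decomposes as $S=S_1\times\cdots\times S_k$, where $S_i$ is a Sylow $2$-subgroup of $L_i$. Each $S_i$ is nontrivial. It cannot be cyclic: by the Burnside-type fact quoted before Corollary~\ref{Q_simple}, and more generally by Burnside's normal $p$-complement theorem, a non-abelian simple group has non-cyclic Sylow $2$-subgroups. Hence each $|S_i|\ge 4$. For $k\ge2$, the order count $|S|=16$ forces $k=2$ and $S\cong S_1\times S_2$ with $|S_1|=|S_2|=4$. But the semidihedral group $SD_{16}$ is not a direct product of two groups of order $4$: it is non-abelian, whereas such a product is abelian. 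Therefore $k=1$ and $Q=L$ is simple.

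A cleaner alternative for the second step uses the centre. $Z(S)=\langle z\rangle$ has order $2$, while a direct product of $k$ nontrivial $2$-groups has centre of order at least $2^k$, so again $k=1$.

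The only potential obstacle is the first step: we must ensure that $S$ indeed lies in $Q$, that is, that $S\cap Q$ is nontrivial. This follows from Sylow's theorem together with the Feit--Thompson theorem, without needing any computation. The statement applies to every minimal normal subgroup, and the argument above never used anything specific to the chosen $Q$, so it covers all of them at once.
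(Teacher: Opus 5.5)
Your proof is correct and uses the same ingredients as the paper: the Sylow intersection $S\cap Q$ for normal $Q$, Feit--Thompson, non-cyclicity of Sylow $2$-subgroups of non-abelian simple groups, and the fact that $S$ is non-abelian. The only structural difference is the order: you prove $G\le Q$ first and then $k=1$, whereas the paper bounds $k$ first and deduces $G\le Q$ afterwards. Your ordering means $S$ is already known to be the Sylow $2$-subgroup of $Q$ when you exclude $k=2$, and your centre variant ($|Z(S)|=2$ versus $|Z(S_1\times\cdots\times S_k)|\ge 2^k$) does not need the non-cyclicity fact at all.
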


\begin{proof}
Recall that $S$ is a Sylow $2$-subgroup of $\hat G$ and $|S|=2^4$. Assuming that $Q\cong L\times L\cdots\times L$, with the number of factors $k$, and using that the Sylow $2$-subgroup of $L$ cannot be cyclic, we immediately see that $k\leq 2$. Furthermore, if $k=2$ then the Sylow
$2$-subgroup of $L$ is of order $2^2$ and hence abelian. This means that the Sylow $2$-subgroup of $Q\cong L\times L$ is abelian of
order $2^4$, which is a contradiction, since $S$ is not abelian.

Thus, $k=1$. Since $S$ is Sylow in $\hat G$, we have that $S\cap Q\neq 1$, and consequently, $G\cap Q\neq 1$. Since $G$ is simple, we conclude that $G\leq Q$.
\end{proof}

We are now in a position to prove the ultimate result of this section.

\begin{proposition}
The full automorphism group $\hat{G}$ of $A$ coincides with $G=M_{11}$.
\end{proposition}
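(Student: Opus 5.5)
We plan to identify $Q$ first and then recover $\hat G$ from it. By Lemma \ref{min_normal_contains_G}, every minimal normal subgroup $Q$ of $\hat G$ is non-abelian simple and contains $G\cong M_{11}$. Two distinct minimal normal subgroups would intersect trivially, yet both would contain $G$. Hence $Q$ is the unique minimal normal subgroup of $\hat G$. Since $\hat G$ also has no abelian minimal normal subgroups, $C_{\hat G}(Q)=1$. Therefore $Q=F^\ast(\hat G)$ and $\hat G$ embeds in $\Aut(Q)$. It remains to prove $Q=G$. Then $\hat G\le\Aut(M_{11})=M_{11}$, which gives $\hat G=G$.

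To identify $Q$, we use that $S$ is a Sylow $2$-subgroup of $\hat G$ and $S\le G\le Q$. Thus $S$ is also a Sylow $2$-subgroup of $Q$, so $Q$ is a finite simple group with a semidihedral Sylow $2$-subgroup of order $16$. By the Alperin--Brauer--Gorenstein classification of simple groups with quasidihedral (semidihedral) Sylow $2$-subgroups, $Q$ is one of the following:
\begin{itemize}
\item $L_3(q)$ with $q\equiv 3\bmod 4$;
\item $U_3(q)$ with $q\equiv 1\bmod 4$;
\item $M_{11}$.
\end{itemize}
The order $16$ then forces specific congruences on $q$. In the $L_3(q)$ case, the $2$-part of $q+1$ must equal $4$, so $q\equiv 3\bmod 8$. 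In the $U_3(q)$ case, the $2$-part of $q-1$ must equal $4$, so $q\equiv 5\bmod 8$. The main work is to eliminate these infinite families.

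For the elimination, we use the fact that $Q$ has a single class of involutions in all three cases. Hence every involution of $Q$ is conjugate to $z$, which is a Miyamoto involution. All involutions of $Q$ therefore lie in a class of $6$-transpositions: the product of any two of them has order at most $6$.
\begin{itemize}
\item In $L_3(q)$, an involution normalises and inverts a torus of order $q+1$ or $q-1$ coming from an $L_2(q)$ or $GL_2(q)$ subgroup. It therefore inverts elements of order $(q+1)/\gcd$-type, and some product of two involutions has order at least roughly $(q+1)/2$. The $6$-transposition bound gives $(q+1)/2\le 6$, hence $q\le 11$.
\item In $U_3(q)$, an involution centralises a $GU_2(q)$ containing $SL_2(q)$. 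By the same reasoning we get $(q+1)/2\le 6$, or at worst $(q-1)/2\le 6$, hence $q\le 13$.
\end{itemize}
This leaves finitely many candidates: $L_3(3)$, $L_3(11)$, $U_3(5)$, $U_3(13)$ and $M_{11}$. Among these we discard the groups that do not contain $M_{11}$, using Lagrange or known maximal subgroups. $L_3(3)$ has order $5616$, which is not divisible by $11$. $U_3(5)$ has order $126000$, also not divisible by $11$. $|L_3(11)|$ is divisible by $|M_{11}|$ as a number, but $L_3(11)$ has no $M_{11}$ subgroup. In any case, $L_3(11)$ contains $SL_2(11)$-type subgroups giving involution products of order $12$, violating the $6$-transposition property; we would check this directly. $U_3(13)$ is handled similarly. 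The obstacle is carrying out these torus-inversion bounds carefully. This is the step I expect to be hardest, and I would first try explicit $2\times 2$ block-matrix involutions to exhibit products of large order.

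With all alternatives excluded, $Q\cong M_{11}$ and $G\le Q$ force $Q=G$. Then $\hat G\le\Aut(G)=G$, so $\hat G=G$.
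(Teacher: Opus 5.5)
Your proof is correct and shares the paper's skeleton: uniqueness of $Q$, the embedding $\hat G\hookrightarrow\Aut(Q)$, and the Alperin--Brauer--Gorenstein list. Where you differ is in how $L_3(q)$ and $U_3(q)$ are eliminated.

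The paper cites the known maximal subgroups of $L_3(q)$ and $U_3(q)$ (Bloom) to conclude that $M_{11}$ embeds in neither, for any $q$. You instead note that $Q$ has a single class of involutions, so every involution of $Q$ is a Miyamoto involution $\tau_{a^g}$. You then use the $6$-transposition property to bound $q$ and finish with Lagrange. Your route is more self-contained: it uses only group orders and the same torus-inversion trick the paper applies to $L_2(q)$ in the $L_2(11)$ section. The paper's route is a one-line appeal to a classification of maximal subgroups.

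Your torus step can be made precise and sharper. Embed $GL_2(q)$ in $SL_3(q)$ by $X\mapsto\mathrm{diag}(X,\det X^{-1})$. Then $GL_2(q)$ contains a dihedral group of order $2(q+1)$ generated by involutions of determinant $-1$, which become involutions $\mathrm{diag}(r,-1)$ in $SL_3(q)$. Likewise $U_3(q)\geq SO_3(q)\cong PGL_2(q)$ contains such a dihedral group. No nontrivial element of the cyclic parts becomes scalar, so $Q$ contains two involutions whose product has order exactly $q+1$. This gives $q+1\le 6$. Combined with your congruences, only $L_3(3)$ and $U_3(5)$ remain, and both have order prime to $11$.

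One factual slip: $|M_{11}|$ does \emph{not} divide $|L_3(11)|=2^4\cdot 3\cdot 5^2\cdot 7\cdot 11^3\cdot 19$, since $9\mid|M_{11}|$. Similarly $|U_3(13)|=2^4\cdot 3\cdot 7^2\cdot 13^3\cdot 157$. So even with your weaker bound $q\le 13$, Lagrange alone disposes of all four candidates. You need neither the subgroup structure of $L_3(11)$ nor a separate $6$-transposition check there.
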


\begin{proof}
Taking a minimal normal subgroup $Q$ of $\hat G$, we have by Lemma \ref{min_normal_contains_G} that $Q$ is simple and $G\leq Q$. Since $S\leq G$ is Sylow in $\hat G$, we have that $S$ is Sylow in $Q$, and so $Q$ is a finite simple group with a semi-dihedral Sylow subgroup of order $2^4$. By \cite{Alperin}, $Q$ is one of $M_{11}$, $L_3(q), q\equiv -1 \mbox{ mod }4$,
or $U_3(q), q\equiv 1 \mbox{ mod } 4$. Maximal subgroups of $L_3(q)$, $q$ odd, are known (see for instance \cite{Bloom}),
and looking through the list, we easily see that $M_{11}$ cannot be a subgroup of $L_3(q)$. Similarly,
$M_{11}$ is not embeddable in $U_3(q)$, for any $q$, and we conclude that $Q=M_{11}$. 

Thus, $G$ is a minimal normal subgroup of $\hat G$ and clearly this means that $G$ is the unique minimal normal subgroup of $\hat G$. This implies that $\hat G$ is isomorphic to a subgroup of $\Aut(M_{11})\cong M_{11}$, and this yields the result. 
\end{proof}

\end{document}